\newtheorem{theorem}{Theorem}[section]
\newtheorem{proposition}[theorem]{Proposition}
\newtheorem{corollary}[theorem]{Corollary}
\newtheorem{remark}[theorem]{Remark}
\newtheorem{lemma}[theorem]{Lemma}
\newtheorem{algorithm}[theorem]{Algorithm}
 \numberwithin{equation}{section} 
\begin{document}

\newcommand{\ZZ}{\mathbb Z}
\newcommand{\CC}{\mathbb C}
\newcommand{\RR}{\mathbb R}
\newcommand{\QQ}{\mathbb Q}
\newcommand{\FF}{\mathbb F}
\newcommand{\OO}{\mathcal O}
\newcommand{\PP}{\mathcal P}
\newcommand{\mm}{\mathfrak m}
\newcommand{\nn}{\mathfrak n}
\newcommand{\ff}{\mathfrak f}
\newcommand{\aaa}{\mathfrak a}
\newcommand{\bbb}{\mathfrak b}
\newcommand{\ppp}{\mathfrak p}
\newcommand{\qqq}{\mathfrak q}
\newcommand{\PPP}{\mathfrak P}
\newcommand{\PPM}{\mathfrak M}
\newcommand{\AAA}{\mathfrak A}
\newcommand{\qq}{\mathfrak Q}
\newcommand{\Mod}{\operatorname{mod}}
\newcommand{\im}{\operatorname{im}}
\newcommand{\RRR}{\widehat{R}}

\title{Factoring formal power series over principal ideal domains}
\author{Jesse Elliott }  \address{California
State University, Channel Islands\\ One University Drive \\ Camarillo, California 93012}
\email{jesse.elliott@csuci.edu} \date{\today}

\begin{abstract}
We provide an irreducibility test and factoring algorithm (with some qualifications) for formal power series in the unique factorization domain $R[[X]]$, where $R$ is any principal ideal domain.  We also classify all integral domains arising as quotient rings of $R[[X]]$.  Our main tool is a generalization of the $p$-adic Weierstrass preparation theorem to the context of complete filtered commutative rings.
\end{abstract}

\maketitle

\section{Introduction}\label{intro}

All rings and algebras in this paper are assumed commutative with identity.    For any ring $R$, if $f$ is a polynomial in $R[X]$ or a formal power series in $R[[X]]$, then we let $f_i$, or $(f)_i$ when necessary, denote the coefficient of $X^i$ in $f$.  A homomorphism of $R$-algebras will be called an {\it $R$-homomorphism}.

It is well-known that formal power series rings exhibit pathologies that their polynomial ring counterparts do not \cite{brewer}.  For example, if $R$ is a ring of finite Krull dimension $n = \dim R$, then $n+1 \leq \dim R[X] \leq 2n+1$, while $\dim R[[X]]$ may be infinite, even if $\dim R = 0$.  Also, if $R$ is a unique factorization domain (UFD), then $R[X]$ is also a UFD, but $R[[X]]$ need not be.  In fact it is unknown whether or not $R[[X]]$ is a UFD if $R$ is the ring of polynomials in a countably infinite number of variables over a field $K$ \cite{gilmer}.

Interesting problems concerning $R[[X]]$ arise even for $R = \ZZ$.  For example, there is no known irreducibility criterion for the elements of $\ZZ[[X]]$ \cite{gil1} \cite{gil2}.   As noted in \cite{gil1} there are polynomials that are reducible in $\ZZ[X]$ while irreducible in $\ZZ[[X]]$, such as $(1+X)(2+X)$, and likewise there are polynomials that are irreducible in $\ZZ[X]$ while reducible in $\ZZ[[X]]$, such as $6+X$.  (See Propositions \ref{simplefactor1} and \ref{simplefactor}.)  The articles \cite{gil1} \cite{gil2} \cite{gil3} provide the following: (1) sufficient conditions for a power series $f \in \ZZ[[X]]$ to be irreducible; (2) sufficient conditions for $f$ to be reducible, along with factorization algorithms in those cases; and (3) necessary and sufficient conditions for $f$ to be irreducible in $\ZZ[[X]]$ if $f$ is a polynomial in $\ZZ[X]$ of degree at most $3$.  In this paper we provide an irreducibility test and factoring algorithm (with some qualifications) for formal power series in the UFD $R[[X]]$ for any principal ideal domain (PID) $R$.

Let $R$ be a ring.   A nonzero element $a$ of $R$ is said to be {\it prime (in $R$)} if $(a)$ is a prime ideal of $R$.  For any ideal $\aaa$ of $R$ we denote by $\RRR_\aaa$ the $\aaa$-adic completion $\displaystyle \varprojlim R/\aaa^n$ of $R$.  The following irreducibility criterion is proved in Section \ref{sec:3}.  

\begin{theorem}\label{irredtest2}
Let $R$ be a PID.    A polynomial $f \in R[X]$ is irreducible in $R[[X]]$ if and only if one of the following conditions holds.
\begin{enumerate} 
\item $f_0 = 0$ and $f_1$ is a unit in $R$.
\item $f_0$ is associate to a power of some prime $\pi \in R$, and if $f = gh$ with $g,h \in \RRR_{(\pi)}[X]$, then either $g_0$ or $h_0$ is a unit in $\RRR_{(\pi)}$.
\end{enumerate}
\end{theorem}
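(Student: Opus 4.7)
The plan is to split on whether $f_0 = 0$ and, in the non-vanishing case, to first pin down the prime structure of $f_0$ and then reduce irreducibility in $R[[X]]$ to a factorization question in $\RRR_{(\pi)}[X]$ via the generalized Weierstrass preparation theorem developed earlier in the paper. If $f_0 = 0$, write $f = Xg$ with $g \in R[X]$ and $g_0 = f_1$; since $R[[X]]/(X) \cong R$ is a domain, $X$ is prime in $R[[X]]$, so $f$ is irreducible precisely when $g$ is a unit in $R[[X]]$, i.e., when $f_1 \in R^\times$. This gives condition (1).

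Assume now $f_0 \neq 0$. Irreducibility of $f$ forces $f_0$ to be a non-unit; write $f_0 = v \pi_1^{e_1} \cdots \pi_k^{e_k}$ with $v \in R^\times$, distinct primes $\pi_i$, and $e_i \geq 1$. I claim $k = 1$. If instead $k \geq 2$, set $a = \pi_1^{e_1}$ and $b = v\pi_2^{e_2} \cdots \pi_k^{e_k}$, so $ab = f_0$ and $a,b$ are coprime non-units of $R$. Choose $\alpha, \beta \in R$ with $\alpha a + \beta b = 1$, and define $g, h \in R[[X]]$ inductively by $g_0 = a$, $h_0 = b$, and, for $n \geq 1$, $g_n = \beta c_n$ and $h_n = \alpha c_n$, where $c_n = f_n - \sum_{0 < i < n} g_i h_{n-i}$. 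A direct check using $\alpha a + \beta b = 1$ gives $gh = f$, exhibiting a nontrivial factorization of $f$ and contradicting irreducibility. Hence $f_0 \sim \pi^n$ for a single prime $\pi \in R$.

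With $f_0 \sim \pi^n$, Weierstrass preparation connects factorizations of $f$ in $R[[X]]$ with those in $\RRR_{(\pi)}[X]$. For the direction assuming (2), suppose $f = gh$ in $R[[X]]$ with both factors non-units; the constant terms divide $v\pi^n$ in $R$ and are non-units, so they lie in $\pi\RRR_{(\pi)}$. If $\pi \mid f$ in $R[[X]]$, then $f = \pi \cdot (f/\pi)$ is already a factorization in $R[X] \subset \RRR_{(\pi)}[X]$ with both constant terms in $\pi\RRR_{(\pi)}$ when $n \geq 2$, while the case $n = 1$ forces $f$ to be associate to the prime $\pi$ in $R[[X]]$, contradicting the assumed non-trivial factorization. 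Otherwise $\pi \nmid f$, and Weierstrass preparation in $\RRR_{(\pi)}[[X]]$ yields $g = u_g q_g$ and $h = u_h q_h$ with $u_g, u_h$ units and $q_g, q_h$ distinguished polynomials of positive degree; since $q_h$ is monic, polynomial division of $f$ by $q_h$ inside $\RRR_{(\pi)}[X]$ converts $f = u_g u_h q_g q_h$ into $f = F \cdot q_h$ with $F \in \RRR_{(\pi)}[X]$, and both constant terms lie in $\pi\RRR_{(\pi)}$, violating (2). Conversely, a factorization $f = gh$ in $\RRR_{(\pi)}[X]$ with both constant terms non-units produces, via Weierstrass applied to $g$ and $h$, a nontrivial factorization of the distinguished part $q_f$ of $f$ in $\RRR_{(\pi)}[X]$; the correspondence between factorizations of $f$ in $R[[X]]$ and of $q_f$ in $\RRR_{(\pi)}[X]$ supplied by the preceding section then returns a nontrivial factorization of $f$ in $R[[X]]$.

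I expect this last descent step to be the main obstacle: passing from a factorization over the completion back to an honest factorization in $R[[X]]$ is not formal, and it depends crucially on the hypothesis that only the single prime $\pi$ is relevant to $f_0$, which is precisely what allows $f$ to be controlled up to units in $\RRR_{(\pi)}[[X]]$ by its distinguished polynomial in $\RRR_{(\pi)}[X]$.
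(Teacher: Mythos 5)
Your overall route is the same as the paper's: dispose of $f_0=0$ using primality of $X$, force $f_0$ to be associate to a single prime power via the coprime splitting (this is exactly Proposition \ref{simplefactor} / Corollary \ref{PIDlemma}(2)), and then translate between factorizations of $f$ in $R[[X]]$ and factorizations in $\RRR_{(\pi)}[X]$ via the Weierstrass preparation theorem over $\RRR_{(\pi)}$, with the descent back to $R[[X]]$ supplied by Theorem \ref{irredtest} (equivalently Theorem \ref{maintheorem2}) -- which is also precisely what the paper's proof leans on. Your forward direction is complete and correct, including the separate treatment of $\pi \mid f$.

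The one genuine gap is in your converse direction. You argue that a factorization $f=gh$ in $\RRR_{(\pi)}[X]$ with both constant terms nonunits yields, ``via Weierstrass applied to $g$ and $h$,'' a nontrivial factorization of the distinguished part of $f$. This presupposes that $f$ (and hence $g$ and $h$, by Lemma \ref{preplemmab}) is $\widehat{(\pi)}$-distinguished, i.e.\ that $\pi \nmid f$. If $\pi \mid f$ there is no distinguished part and Weierstrass preparation does not apply to $g$ or $h$ (e.g.\ $g$ could be $\pi$ times a unit), so the argument as written fails there. The repair is short: if $\pi\mid f$ and both $g_0,h_0$ lie in $\pi\RRR_{(\pi)}$, then $v_\pi(f_0)=v_\pi(g_0)+v_\pi(h_0)\geq 2$, so $f=\pi\cdot(f/\pi)$ is already a product of two nonunits of $R[[X]]$ and $f$ is reducible. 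The paper sidesteps this case split by first writing $f=\pi^k f'$ with $\pi\nmid f'$, applying preparation only to $f'$ (noting $U_{f'}\in\RRR_{(\pi)}[X]$ by Lemma \ref{preplemma4}(1)), and then characterizing failure of condition (2) uniformly in terms of $k$ and $P_{f'}$ before invoking Theorem \ref{irredtest}; you may find that organization cleaner than patching the two cases separately.
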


Theorem \ref{irredtest2} shows in particular that the irreducibility in $\ZZ[[X]]$ of a polynomial $f \in \ZZ[X]$ not associate to $X$ depends precisely on (1) how $f_0$ factors in $\ZZ$---the constant term $f_0$ must be $\pm p^k$ for some prime $p$; and (2)  how $f$ factors in the polynomial ring $\ZZ_p[X]$, where $\ZZ_p$ denotes the ring $\widehat{\ZZ}_{(p)}$ of $p$-adic integers---$f$ must have a unique irreducible factor, up to associate, with constant term in $p\ZZ_p$.  That the $p$-adic numbers are involved in this problem is suggested by several results in \cite{gil2} \cite{gil3}, but the relationship is not surprising since $f \in \ZZ[[X]]$ is irreducible if and only if the quotient ring $\ZZ[[X]]/(f)$ is an integral domain, and the ring $\ZZ_p \cong \ZZ[[X]]/(p-X)$ arises as such a quotient ring.  Similarly, for example, the quotient ring  $\ZZ[[X]]/(p-X^2)$ is isomorphic to $\ZZ_p[\sqrt{p}]$.  This follows from Theorem \ref{irredtest} below, but to verify the isomorphism directly one may show that the obvious ring homomorphism from $\ZZ[[X]]$ to $\CC_p$ sending $X$ to $\sqrt{p}$, where $\CC_p$ denotes the field of complex $p$-adic numbers, has the appropriate kernel and image.  Generalizing these examples, \cite[Theorem 3.1.1]{mcd} shows that the integral domains arising as quotient rings of $\ZZ[[X]]$ are, up to isomorphism, precisely the following: $\ZZ[[X]]$, $\ZZ$, $\FF_p[[X]]$, $\FF_p$, and $\ZZ_p[\alpha]$, where $p$ is any prime and $\alpha$ is any element of $\overline{\QQ_p}$ with $v(\alpha) > 0$, where $v$ is the unique valuation on $\CC_p$ extending the $p$-adic valuation on $\QQ_p$.  This result generalizes as follows.  Define the {\it absolute integral closure} $R^+$ of an integral domain $R$ to be the integral closure of $R$ in an algebraic closure of its fraction field.  It is known that $R^+$ is local if $R$ is a Henselian local domain \cite{art}.

\begin{theorem}\label{quotientrings}
Let $R$ be a PID.  The integral domains arising as quotient rings of $R[[X]]$ are, up to isomorphism, precisely the following: $R[[X]]$, $R$, $(R/\mm)[[X]]$, $R/\mm$, and $\RRR_\mm[\alpha]$, where $\mm$ is any maximal ideal of $R$ and $\alpha$ is any element of the unique maximal ideal of the absolute integral closure of $\RRR_\mm$.
\end{theorem}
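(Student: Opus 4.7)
The plan is to prove both directions: each of the five listed rings is a domain arising as a quotient of $R[[X]]$, and conversely every prime ideal $I \subseteq R[[X]]$ has $R[[X]]/I$ isomorphic to one of them. The first four quotients come from ideals in plain sight: $R[[X]]/(0)$, $R[[X]]/(X) \cong R$, $R[[X]]/\mm R[[X]] \cong (R/\mm)[[X]]$, and $R[[X]]/(\mm, X) \cong R/\mm$. For $\RRR_\mm[\alpha]$, which is a domain as a subring of $(\RRR_\mm)^+$, I would define a continuous $R$-algebra map $\phi \colon R[[X]] \to \RRR_\mm[\alpha]$ by $X \mapsto \alpha$ (valid since $\alpha$ is topologically nilpotent and $\RRR_\mm[\alpha]$ is complete). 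Using that the $\alpha$-adic and $\mm$-adic topologies on $\RRR_\mm[\alpha]$ coincide (which follows from the distinguished minimal polynomial of $\alpha$) and that $\RRR_\mm[\alpha]/\alpha \RRR_\mm[\alpha]$ is a finite quotient of $R$, one shows $\phi$ is surjective by iteratively lifting residues to $R$; then $\RRR_\mm[\alpha] \cong R[[X]]/\ker \phi$.

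For the converse, let $A := R[[X]]/I$ be a domain quotient. If $I = 0$, then $A = R[[X]]$. If $X \in I$, then $I/(X)$ is a prime of $R[[X]]/(X) \cong R$, so $I = (X)$ or $I = (\mm, X)$, giving $A = R$ or $A = R/\mm$. If $X \notin I$ and $\mm := I \cap R \neq 0$, then $\mm$ is maximal (as $R$ is a PID), $I \supseteq \mm R[[X]]$, and since $I/\mm R[[X]]$ is a prime of the DVR $(R/\mm)[[X]]$ not containing $X$, it must be zero, so $I = \mm R[[X]]$ and $A = (R/\mm)[[X]]$. The remaining case is $I \neq 0$, $X \notin I$, and $I \cap R = 0$.

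In this case $I$ is a height-$1$ prime of the $2$-dimensional Noetherian UFD $R[[X]]$, hence principal: $I = (f)$ with $f$ irreducible and $f_0 \neq 0$. I would first show $f_0$ is a prime power (up to a unit), say $f_0 = \pi^e u$ with $u \in R^\times$: otherwise $f_0 = ab$ with $\gcd(a, b) = 1$ and both $a, b$ non-units, and one constructs a factorization $f = gh$ into non-units by recursively solving $a h_n + b g_n = c_n$ (the obstruction to matching the $X^n$-coefficient) using a Bezout identity $ua + vb = 1$, contradicting irreducibility. With $\mm = (\pi)$, Weierstrass preparation in $\RRR_\mm[[X]]$ yields $f = p(X) v(X)$ with $p \in \RRR_\mm[X]$ distinguished of degree $n \geq 1$ and $v \in \RRR_\mm[[X]]^\times$.

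The main obstacle and final step is to identify $A$ with $\RRR_\mm[X]/(p)$. The plan is to show $A$ is $(\pi, X)$-adically complete, from which
\[
A \;\cong\; A \otimes_{R[[X]]} \RRR_\mm[[X]] \;=\; \RRR_\mm[[X]]/(p) \;\cong\; \RRR_\mm[X]/(p),
\]
where the middle equality uses $f = pv$ with $v$ a unit and the last follows from Weierstrass division. Completeness has two pieces: $A$ is $X$-adically complete as a finitely generated quotient of the $X$-adically complete Noetherian ring $R[[X]]$; and the $X$-adic and $(\pi, X)$-adic topologies on $A$ coincide, since the relation $f(X) = 0$ in $A$ rearranges to $\pi^e u = -X(f_1 + f_2 X + \cdots)$, giving $\pi^e \in XA$. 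Finally, $A$ being a domain forces $p$ to be irreducible in $\RRR_\mm[X]$; taking a root $\alpha$ of $p$ in $(\RRR_\mm)^+$ (integral by monicity of $p$), the relation $\alpha^n = -\sum_{i < n} p_i \alpha^i$ with $p_i \in \mm \RRR_\mm$ forces $v(\alpha) \geq 1/n > 0$ by the non-Archimedean property, so $\alpha$ lies in the maximal ideal of $(\RRR_\mm)^+$, and $A \cong \RRR_\mm[X]/(p) \cong \RRR_\mm[\alpha]$.
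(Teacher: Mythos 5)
Your converse direction is sound and tracks the paper's argument closely: you classify the primes of $R[[X]]$, reduce to a principal prime $(f)$ with $f_0\neq 0$ and $\pi\nmid f$, force $f_0$ to be a prime power via the coprime-splitting recursion (the paper's Proposition \ref{simplefactor}/Corollary \ref{PIDlemma}(2)), and then identify $R[[X]]/(f)$ with $\RRR_\mm[X]/(p)$. Your packaging of that last identification --- show $R[[X]]/(f)$ is $(\pi,X)$-adically complete because the $X$-adic and $(\pi,X)$-adic topologies coincide, then tensor with the completion $\RRR_\mm[[X]]$ of the Noetherian ring $R[[X]]$ --- is a legitimate and arguably cleaner substitute for the paper's hands-on Lemma \ref{newlemma}; it works here precisely because $R$ is a PID so $R[[X]]$ is Noetherian.

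The genuine gap is in the forward direction, and it is exactly the content of the paper's Lemma \ref{what}. You are given only that $\alpha$ lies in the maximal ideal of the absolute integral closure of $\RRR_\mm$, yet you immediately invoke ``the distinguished minimal polynomial of $\alpha$'' to get that the $\alpha$-adic and $\mm$-adic topologies on $\RRR_\mm[\alpha]$ coincide and that $\RRR_\mm[\alpha]/\alpha\RRR_\mm[\alpha]$ is a quotient of $R$. That the minimal polynomial $f$ of $\alpha$ is $\mm$-Weierstrass is not automatic: the constant term lies in $\mm$ easily (it equals $-\alpha(f_1+\cdots+\alpha^{d-1})\in\widehat{\mm}^+\cap\RRR_\mm$), but to place the \emph{middle} coefficients in $\mm$ you need either that $\RRR_\mm$ is Henselian (if some $f_k$ with $1\leq k\leq d-1$ were a unit mod $\mm$, then $\bar f = X^kG$ with $G(0)\neq 0$ would lift by Hensel to a nontrivial monic factorization, contradicting irreducibility --- this is the paper's argument) or that all conjugates of $\alpha$ lie in the unique maximal ideal of $(\RRR_\mm)^+$, so the non-leading elementary symmetric functions lie in $\mm$. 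Without this step the forward inclusion of the $\RRR_\mm[\alpha]$ is unproved. A second, smaller omission: when $\alpha=0$ the ring $\RRR_\mm[\alpha]=\RRR_\mm$ is still in the list, but your map $X\mapsto\alpha$ is then not surjective; you need the separate observation $\RRR_\mm\cong R[[X]]/(\pi-X)$ (the paper's Lemma \ref{preplemma2}) to cover that case.
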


The main tool in our proofs of the results above and indeed in our whole investigation is a generalization of the $p$-adic Weierstrass preparation theorem to the setting of complete filtered rings.  Let $R$ be a ring.  A {\it filtration} of $R$ is an infinite descending sequence $\ff = (\ff_0, \ff_1, \ff_2, \ldots)$ of ideals of $R$.   Let $\RRR_{\ff} = \varprojlim R/\ff_i$ denote the completion of $R$ with respect to $\ff$.  We say that $R$ is {\it complete with respect to $\ff$} if the natural homomorphism $R \longrightarrow \RRR_{\ff}$ is an isomorphism.   This is equivalent to saying that $R$ is complete and Hausdorff in the $\ff$-topology.   The completion $\RRR_{\ff}$ of $R$ is complete with respect to the filtration $\widehat{\ff} =  (\widehat{\ff_0}, \widehat{\ff_1}, \widehat{\ff_2}, \ldots)$, where $\widehat{I}$ for any ideal $I$ of a topological ring $S$ denotes the closure of $I$ in $S$ \cite[Section III.2 Proposition 15]{bou}.  Let $\aaa$ be an ideal of $R$. 
The filtration $(\aaa^0, \aaa^1, \aaa^2, \ldots)$ of $R$ is called the {\it $\aaa$-adic filtration} of $R$, and we say that $R$ is {\it complete with respect to $\aaa$} if $R$ is complete with respect to the $\aaa$-adic filtration.  We say that a filtration $\ff$ of $R$ is an {\it $\aaa$-filtration} of $R$ if each $\ff_i$ contains some power of $\aaa$.
If $\ff$ is an $\aaa$-filtration of $R$, then $\widehat{\ff}$ is an $\widehat{\aaa}$-filtration of $\RRR_\ff$ and there is a unique continuous $R$-homomorphism $\RRR_\aaa \longrightarrow \RRR_\ff$.

Let us say that $f \in R[X]$ is {\it $\aaa$-Weierstrass (in $R[X]$)} if $f$ is monic and $f = X^{\deg f}$ in $(R/\aaa)[X]$, that is, if $f_{\deg f} = 1$ and $f_i \in \aaa$ for all $i < \deg f$.  We say that $f \in R[[X]]$ is {\it $\aaa$-distinguished of order $n$ (in $R[[X]]$)}, where $n$ is a nonnegative integer,  if $f_n$ is a unit modulo $\aaa$ and $f_i \in \aaa$ for all $i < n$, or equivalently, $f$ is associate to $X^n$ in $(R/\aaa)[[X]]$. 
A polynomial of degree $n$ is $\aaa$-Weierstrass if and only if it is monic and $\aaa$-distinguished of order $n$. 

\begin{theorem}[Weierstrass preparation theorem]\label{Weierstrass} Let $R$ be a ring and $\aaa$ an ideal of $R$ such that $R$ is complete with respect to some $\aaa$-filtration of $R$.  For any $f \in R[[X]]$ that is  $\aaa$-distinguished of order $n$, where $n$ is a nonnegative integer, there exists a unique $\aaa$-Weierstrass polynomial $P \in R[X]$ and a unique unit $U \in R[[X]]$ such that $f = UP$; equivalently, $P$ is the unique monic polynomial in $R[X]$ of least degree that is divisible by $f$ in $R[[X]]$; moreover, one has $\deg P = n$.
\end{theorem}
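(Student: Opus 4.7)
The plan is to first prove a Weierstrass division statement---that for every $g \in R[[X]]$ there exist unique $q \in R[[X]]$ and $r \in R[X]$ with $\deg r < n$ satisfying $g = qf + r$---and then to extract both the factorization $f = UP$ and the minimality characterization from this division, by applying it to $g = X^n$ and by comparing alternative divisions of monic polynomials divisible by $f$.

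The starting observation is that any $x \in R$ which is a unit modulo $\aaa$ is already a unit in $R$: if $xy = 1 + a$ with $a \in \aaa$, then since $\ff$ is an $\aaa$-filtration we have $\aaa^{k_j} \subseteq \ff_j$ for each $j$, so $a^k \to 0$ in the $\ff$-topology, and the geometric series $\sum_{k \geq 0}(-a)^k$ converges by $\ff$-completeness to an inverse of $1 + a$. This makes $f_n$ a unit in $R$, and hence $f_+ := \sum_{j \geq 0} f_{n+j} X^j$ a unit in $R[[X]]$.

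For the division, I would decompose $f = f_- + X^n f_+$ with $f_- := \sum_{i < n} f_i X^i \in \aaa[X]$, and introduce the shift $\tau(\sum a_i X^i) := \sum_{i \geq n} a_i X^{i-n}$. A power series lies in $R[X]$ with degree below $n$ precisely when $\tau$ annihilates it, so $g = qf + r$ with $\deg r < n$ is equivalent to the fixed-point equation $q = T(q)$, where $T(q) := f_+^{-1}\bigl(\tau(g) - \tau(qf_-)\bigr)$. Because $f_- \in \aaa[X]$, the coefficients of $T(q) - T(q')$ lie in $\aaa$ times the coefficients of $q - q'$, so iterating $T$ from $q_0 := 0$ yields a sequence whose successive coefficient-wise differences lie in $\aaa^k$; these are Cauchy in the $\ff$-topology on $R$, and $\ff$-completeness produces a fixed point $q$, whereupon $r := g - qf$ is automatically a polynomial of degree below $n$. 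Uniqueness of $q$ (and hence of $r$) follows from $\bigcap_k \aaa^k \subseteq \bigcap_j \ff_j = 0$.

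With division in hand, I would apply it to $g = X^n$ to obtain $X^n = qf + r$ and set $P := X^n - r$, $U := q^{-1}$. Reducing modulo $\aaa$ forces $r \in \aaa[X]$ (since $qf \bmod \aaa$ has only terms of degree $\geq n$ while $r$ has degree below $n$), whence $P$ is $\aaa$-Weierstrass of degree $n$; and $q_0 f_n \equiv 1 \pmod \aaa$ then makes $q_0$, hence $q$, a unit by the preliminary fact. For the uniqueness and minimality statements, any monic $Q \in R[X]$ of degree $m$ with $f \mid Q$, say $Q = fh$, admits the Weierstrass division $Q = hf + 0$; if $m < n$ then $Q = 0 \cdot f + Q$ is another such division, forcing $Q = 0$ by uniqueness and contradicting monicity, while if $m = n$ then the same uniqueness applied to the polynomial $P - Q$ (of degree below $n$, still divisible by $f$) forces $P = Q$. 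The main obstacle is making the contraction argument for $T$ genuinely rigorous using only an $\aaa$-filtration rather than the $\aaa$-adic topology on $R$: one must track coefficient-wise convergence in the $\ff$-topology driven by $\aaa$-adic decay of the coefficients, rather than invoking any single topology on $R[[X]]$.
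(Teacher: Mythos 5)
Your argument is correct, and its engine coincides with the paper's: both rest on the decomposition $f = \alpha_n(f) + X^n\tau_n(f)$ with $\alpha_n(f) \in \aaa R[X]$ and $\tau_n(f)$ a unit, and on the observation that the relevant operator raises the $\aaa$-adic order of coefficients, so that iteration converges coefficientwise in the $\ff$-topology. The organizational difference is that you route through the full Weierstrass division statement ($g = qf + r$ for arbitrary $g$), in the style of Lang's original two-step proof, and then specialize to $g = X^n$; the paper instead solves directly for the unit $V = U^{-1}$ by rewriting ``$Vf$ is monic of degree $n$'' as $(T+I)(Z) = 1$ with $Z = V\tau_n(f)$ and $T$ the operator ``multiply by $\alpha_n(f)\tau_n(f)^{-1}$, then apply $\tau_n$,'' inverting $T+I$ by the Neumann series $\sum_{i}(-1)^iT^i$. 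These are the same computation in different clothing: after the substitution $Z = q\,\tau_n(f)$, the iterates of your affine contraction started at $0$ are exactly the partial sums of that Neumann series, and both proofs face the same technical point you flag, namely that the contraction estimate lives in the $\aaa$-adic filtration while convergence is coefficientwise in the $\ff$-topology (one needs $T$ to commute with coefficientwise limits, or to work with closures of the $\aaa^k$). Your route buys the division theorem itself --- stronger than needed, but it makes the uniqueness and minimality clauses fall out cleanly from uniqueness of division --- while the paper's route buys brevity. The one clause you leave implicit is that \emph{any} $\aaa$-Weierstrass $P'$ with $f = U'P'$ has degree exactly $n$ (so that your degree-$n$ uniqueness argument applies to it); this follows by reducing $f = U'P'$ modulo $\aaa$ and comparing with $\bar f$ associate to $X^n$, and the paper is equally terse on this point.
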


A well-known and elegant proof of Theorem \ref{Weierstrass} in the case where $R$ is complete with respect to $\aaa$ and $\aaa$ is the unique maximal ideal of $R$ is given in \cite[Theorems IV.9.1--2]{lang}. The theorem is also proved in \cite{oma} in the case where $R$ is complete with respect to $\aaa = (f_0, f_1, \ldots, f_{n-1})$.  However, the hypothesis that $R$ is complete with respect to $\aaa$ imposes unnecessary restrictions on applicability of the theorem, since for an arbitrary ideal $\aaa$ of a ring $R$ the $\aaa$-adic completion $\RRR_\aaa$ of $R$ is complete with respect to the $\widehat{\aaa}$-filtration $(\widehat{\aaa^0}, \widehat{\aaa^1},  \widehat{\aaa^2}, \ldots)$ but may not be complete with respect to $\widehat{\aaa}$  if $\aaa$ is not finitely generated \cite[Exercise III.2.12 and Section III.2 Proposition 16  Corollary 2]{bou}.  Thus our generalization of the Weierstrass preparation theorem to arbitrary complete filtered rings allows for applications to the completion of any ring with respect to any of its ideals.

In Section \ref{sec:3} we adapt the proof of \cite[Theorems IV.9.1--2]{lang} to yield a concise proof of Theorem \ref{Weierstrass}.   We call the polynomial $P$ of Theorem \ref{Weierstrass} the {\it Weierstrass polynomial (in $R[X]$) associated to $f$} and denote it by $P_f = P_{f,R}$, and we denote $U$ by $U_f = U_{f,R}$.  Algorithm \ref{walg} of Section \ref{sec:5} is an algorithm for computing $P_f$ and $U_f$ to any desired degree of accuracy, correct to within $\AAA^N$ for any desired positive integer $N$, where $\AAA = \aaa R[[X]] + XR[[X]]$, under the assumption that one knows $f$ to within $\AAA^{(n+1)N}$ and one has an algorithm for performing the ring operations in $R/\aaa^i$ for $i \leq (n+1)N$.

It should be noted that any ``algorithm'' using the ring operations of a ring $R$ requires infinite precision if some elements of $R$ are not finitely specifiable, which is the case, for example, if $R$ is the ring of integers $\mathcal{O}_K$ of some local field $K$. Nevertheless, there are honest algorithms to which, for example, one can input a sufficient approximation to $f \in \mathcal{O}_K[X]$ modulo a power of $\pi \mathcal{O}_K[X]$, where $\pi$ is a uniformizer of $K$, to obtain the irreducible factors of $f$ to a given desired degree of accuracy.  See \cite{can} \cite{chi} \cite{guar} \cite{guar1} \cite{guar2}, for example.   Although some of the algorithms in this paper, such as Algorithms \ref{irredtestalg}, \ref{irredtest2a}, \ref{mainfactoralg}, and \ref{mainfactortheorem}, are equipped with ``oracles'' that tacitly perform the ring operations in $\RRR_\aaa[X]$ or $\RRR_\aaa[[X]]$ for some ideal $\aaa$ of a ring $R$ to infinite precision, we also provide honest algorithms by showing that such algorithms are stable modulo powers of the ideals $\widehat{\aaa}[X]$ and $\widehat{\aaa}[[X]]+X\RRR_\aaa[[X]]$, respectively.  In particular, this is done in Algorithms \ref{walg}, \ref{computeweierstrassalg}, \ref{preplemmaalg},  \ref{mainfactortheoremeffective}, and \ref{irredtestalg2}.

In Section \ref{sec:3}  we prove the following irreducibility criterion for elements of the UFD $R[[X]]$ for any PID $R$.

\begin{theorem}\label{irredtest}
Let $R$ be a PID and $f \in R[[X]]$. Then $f$ is irreducible in $R[[X]]$ if and only if one of the following conditions holds.
\begin{enumerate}
\item $f_0 = 0$ and $f_1$ is a unit in $R$.
\item $f_0$ is associate to a power of some prime $\pi \in R$ and, writing $f = \pi^k f'$, where $\pi \nmid f' \in R[[X]]$, either of the following conditions holds.
\begin{enumerate}
\item $k = 1$ and $f'_0$ is a unit in $R$.
\item $k = 0$ and $P_{f, \RRR_{(\pi)}}$ is irreducible in $\RRR_{(\pi)}[X]$.
\end{enumerate}
\end{enumerate}
Moreover, if $f_0$ is associate to a power of a prime $\pi \in R$ and $\pi \nmid f$, then $R[[X]]/(f) \cong \RRR_{(\pi)}[X]/(P)$ as $R[X]$-algebras, where $P = P_{f, \RRR_{(\pi)}}$ is the Weierstrass polynomial in  $\RRR_{(\pi)}[X]$ associated to $f$.  
\end{theorem}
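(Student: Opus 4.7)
The plan is to first establish the moreover clause---which drives the reverse direction of the main equivalence---and then handle the forward direction by case analysis on how $f_0$ factors.

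Under the hypotheses of the moreover clause ($f_0$ associate to $\pi^{k_0}$ in $R$ with $k_0 \geq 1$, and $\pi \nmid f$ in $R[[X]]$), the element $f$ viewed in $\RRR_{(\pi)}[[X]]$ is $(\pi)$-distinguished of some order $n$ with $1 \leq n < \infty$, so Theorem~\ref{Weierstrass} applied over $\RRR_{(\pi)}$ yields a factorization $f = UP$ with $U$ a unit and $P = P_{f,\RRR_{(\pi)}}$ the $\pi$-Weierstrass polynomial of degree $n$. Since $U$ is a unit one has $\RRR_{(\pi)}[[X]]/(f) = \RRR_{(\pi)}[[X]]/(P)$, and uniquely dividing arbitrary power series by the monic polynomial $P$ with remainder of degree less than $n$ further identifies this with $\RRR_{(\pi)}[X]/(P)$ as $\RRR_{(\pi)}[X]$-algebras. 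The moreover claim thus reduces to showing that the natural map $R[[X]]/(f) \to \RRR_{(\pi)}[[X]]/(f)$ is an isomorphism.

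For surjectivity, given $\tilde g \in \RRR_{(\pi)}[[X]]$ I construct $g \in R[[X]]$ and $h \in \RRR_{(\pi)}[[X]]$ with $g = \tilde g + fh$ by recursion on coefficients. Having chosen $h_0,\ldots,h_{m-1} \in \RRR_{(\pi)}$, the identity $g_m = \tilde g_m + f_0 h_m + \sum_{j=1}^m f_j h_{m-j}$ (with $h_m$ varying over $\RRR_{(\pi)}$) places $g_m$ in the coset $\tilde g_m + \sum_{j=1}^m f_j h_{m-j} + f_0 \RRR_{(\pi)}$; because $f_0 \RRR_{(\pi)} = \pi^{k_0}\RRR_{(\pi)}$ and $R$ surjects onto $R/\pi^{k_0}R = \RRR_{(\pi)}/\pi^{k_0}\RRR_{(\pi)}$, I can take $g_m \in R$ in this coset, after which $h_m \in \RRR_{(\pi)}$ is uniquely determined. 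For injectivity, suppose $g \in R[[X]]$ and $g = fh$ with $h \in \RRR_{(\pi)}[[X]]$; I show $h_m \in R$ by induction on $m$. The equation $f_0 h_m = g_m - \sum_{j=1}^m f_j h_{m-j}$ has right-hand side in $R$ by the inductive hypothesis and simultaneously in $\pi^{k_0}\RRR_{(\pi)} \cap R = \pi^{k_0}R$, so dividing by the unit $f_0 / \pi^{k_0} \in R^\times$ delivers $h_m \in R$.

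The reverse direction of the main equivalence follows immediately. Case (1): $f$ is associate to the prime $X$ (with $R[[X]]/(X) \cong R$). Case (2)(a): $f$ is associate to the prime $\pi$ (with $R[[X]]/(\pi) \cong (R/\pi)[[X]]$). Case (2)(b): the moreover isomorphism together with irreducibility of $P$ in the UFD $\RRR_{(\pi)}[X]$ makes $R[[X]]/(f)$ a domain, so $f$ is prime in $R[[X]]$. For the forward direction, $f_0 = 0$ trivially forces (1); otherwise $f_0$ is a nonzero non-unit with factorization $f_0 = u \pi_1^{e_1}\cdots \pi_r^{e_r}$ in the UFD $R$. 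If $r \geq 2$, an extension of the moreover argument---run simultaneously at each $\pi_i$ and reconciled using the Chinese remainder theorem in $R$---yields $R[[X]]/(f) \cong \prod_{i=1}^r \RRR_{(\pi_i)}[X]/(P_i)$; this product is not a domain, contradicting irreducibility. Hence $r = 1$ and $f_0$ is associate to $\pi^{e_1}$; then either $\pi \mid f$, in which case writing $f = \pi \cdot (\pi^{k-1} f')$ and noting $\pi$ is not a unit forces $\pi^{k-1}f'$ to be a unit, giving (2)(a), or $\pi \nmid f$, in which case primality of $(f)$ combined with the moreover isomorphism forces $P$ to be prime and hence irreducible in $\RRR_{(\pi)}[X]$, giving (2)(b).

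The main obstacle is the surjectivity half of the moreover isomorphism: the inductive construction hinges on $f_0$ having positive $\pi$-adic valuation so that an error modulo $f_0\RRR_{(\pi)}$ can always be corrected by an element of $R$. A secondary difficulty is the $r \geq 2$ forward case, where reducibility must be exhibited; the cleanest route is the product decomposition sketched above, whose proof amounts to running the moreover argument simultaneously at each $\pi_i$ with CRT reconciling the coefficient-wise choices to build a genuine preimage in $R[[X]]$.
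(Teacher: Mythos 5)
Your argument is correct in substance but organized quite differently from the paper's. The paper deduces Theorem \ref{irredtest} from Corollary \ref{PIDlemma}(2) together with Theorem \ref{maintheorem1}; your direct coefficient-by-coefficient verification that $R[[X]]/(f)\longrightarrow\RRR_{(\pi)}[[X]]/(f)$ is bijective is essentially an unpackaged version of the lemmas behind that theorem: your surjectivity recursion is Lemma \ref{preplemma}(2), your injectivity induction is Lemma \ref{nzdlemma}, and the identification $\RRR_{(\pi)}[[X]]/(P)\cong\RRR_{(\pi)}[X]/(P)$ is Lemma \ref{preplemma4}(2). One caution on that last step: division with remainder of a power series by a monic polynomial is \emph{not} available for an arbitrary monic $P$ (try $P=X-1$ over $\ZZ$); it is the Weierstrass property of $P$ together with completeness of $\RRR_{(\pi)}$ that makes the map $\RRR_{(\pi)}[X]/(P)\to\RRR_{(\pi)}[[X]]/(P)$ surjective, so you should justify it as in Lemma \ref{preplemma4}(2) rather than by monicity alone. (The paper also gives a second, shorter proof in Section \ref{sec:7} via Theorem \ref{maintheorem2}, which lifts factorizations of $P_f$ directly back to factorizations of $f$; your approach is closer in spirit to the Section \ref{sec:3} proof.)

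The genuine divergence is the forward direction when $f_0$ has $r\geq 2$ nonassociate prime factors. The paper disposes of this with the elementary explicit factorization $f=(a+sg)(b+rg)$ of Proposition \ref{simplefactor} (packaged as Corollary \ref{PIDlemma}(2)), which exhibits a nontrivial factorization directly and needs no completions. You instead pass to $\RRR_{(f_0)}\cong\prod_i\RRR_{(\pi_i)}$ and argue that the quotient ring is not a domain; this is sound and buys a structural description of $R[[X]]/(f)$ in the spirit of Theorem \ref{quotientrings}, but it additionally leans on $R[[X]]$ being a UFD (so that irreducible implies prime), and it has one uncovered sub-case: if some $\pi_i$ divides every coefficient of $f$, then $f$ is not $(\pi_i)$-distinguished, $P_{f,\RRR_{(\pi_i)}}$ does not exist, and your product decomposition is not defined. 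You must observe separately that in that case $f=\pi_i\cdot(f/\pi_i)$ is already a product of two nonunits, since $r\geq 2$ forces $(f/\pi_i)_0$ to be a nonunit. With that sub-case added, your argument is complete.
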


The following corollary is immediate.

\begin{corollary}\label{irredtestalg0}
Let $R$ be a PID and $f \in R[X]$, and suppose that $f$ is $(\pi)$-Weierstrass and $f_0$ is associate to a power of $\pi$ for some prime $\pi \in R$.  Then $R[[X]]/(f) \cong \RRR_{(\pi)}[X]/(f)$ as $R[X]$-algebras, and therefore $f$ is irreducible in $R[[X]]$ if and only if $f$ is irreducible in $\RRR_{(\pi)}[X]$.
\end{corollary}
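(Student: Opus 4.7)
The plan is to apply the ``moreover'' assertion of Theorem \ref{irredtest} and then identify the associated Weierstrass polynomial $P_{f, \RRR_{(\pi)}}$ with $f$ itself via uniqueness in Theorem \ref{Weierstrass}; the irreducibility equivalence will then follow from general UFD considerations.

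First I would verify the hypotheses of the ``moreover'' clause of Theorem \ref{irredtest}. Since $f$ is $(\pi)$-Weierstrass it is monic, so $\pi \nmid f$; combined with the hypothesis that $f_0$ is associate to a power of $\pi$, this yields an $R[X]$-algebra isomorphism $R[[X]]/(f) \cong \RRR_{(\pi)}[X]/(P)$ with $P = P_{f,\RRR_{(\pi)}}$.

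Next I would show $P = f$. The natural map $R/(\pi) \to \RRR_{(\pi)}/\widehat{(\pi)}$ is an isomorphism, so the congruence $f \equiv X^{\deg f} \pmod{(\pi) R[X]}$ transfers to $\RRR_{(\pi)}[X]$, showing that $f$ is $\widehat{(\pi)}$-Weierstrass there and hence $\widehat{(\pi)}$-distinguished of order $\deg f$ in $\RRR_{(\pi)}[[X]]$. The trivial factorization $f = 1 \cdot f$ displays $f$ as a product of a unit and a $\widehat{(\pi)}$-Weierstrass polynomial, so by the uniqueness assertion of Theorem \ref{Weierstrass} applied to $\RRR_{(\pi)}$ (which is complete with respect to its $\widehat{(\pi)}$-adic filtration), this must coincide with the Weierstrass factorization of $f$. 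Thus $P_{f, \RRR_{(\pi)}} = f$, and substitution gives the desired isomorphism $R[[X]]/(f) \cong \RRR_{(\pi)}[X]/(f)$.

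For the irreducibility equivalence, I would observe that both rings are UFDs: $R[[X]]$ because $R$ is a PID (Cashwell--Everett), and $\RRR_{(\pi)}[X]$ by Gauss's lemma since $\RRR_{(\pi)}$ is a DVR. In any UFD a nonzero nonunit is irreducible exactly when the corresponding quotient is an integral domain; since $f$ is a nonzero nonunit in both rings (its constant term lies in the maximal ideal in each case) and the quotients are isomorphic, irreducibility of $f$ in $R[[X]]$ is equivalent to irreducibility in $\RRR_{(\pi)}[X]$. I do not foresee any serious obstacle here---the substantive content is already packaged into Theorems \ref{irredtest} and \ref{Weierstrass}, with the identification $P_{f, \RRR_{(\pi)}} = f$ being the only step requiring brief verification.
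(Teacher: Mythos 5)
Your argument is correct and is exactly the intended derivation: the paper states this corollary as ``immediate'' from Theorem \ref{irredtest}, and your unpacking---invoking the ``moreover'' clause (with $\pi \nmid f$ because $f$ is monic) and then identifying $P_{f,\RRR_{(\pi)}} = f$ via the uniqueness in Theorem \ref{Weierstrass} since $f$ is itself $\widehat{(\pi)}$-Weierstrass---is precisely what the paper leaves to the reader. The concluding irreducible-iff-prime step in the two UFDs is also the standard way to transfer irreducibility across the quotient isomorphism, so there is nothing to add.
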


The rest of this paper is organized as follows.  Section \ref{sec:2} generalizes \cite[Propositions 3.3 and 3.4]{gil1}.  Section \ref{sec:3} provides several primality criteria for certain distinguished formal power series over an arbitrary ring (Theorems \ref{maintheorem1} and \ref{maintheorem1b}), yielding proofs of Theorems \ref{irredtest2}, \ref{Weierstrass}, and \ref{irredtest} above.  Section \ref{sec:4} contains a proof of Theorem \ref{quotientrings}, and Section \ref{sec:5} provides algorithms for computing associated Weierstrass polynomials.  Section \ref{sec:6}, the main result of which is Algorithm \ref{mainfactortheoremeffective}, provides algorithms for factoring formal power series over a PID; further work would likely yield algorithms that are more efficient than these.  Section \ref{sec:7} provides irreducibility criteria for certain distinguished formal power series over an integral domain (Theorem \ref{maintheorem2}), yielding an alternative (and shorter) proof of Theorem \ref{irredtest}. Finally, Section \ref{sec:8} provides an irreducibility test for formal power series over a factorial number ring that have no nonunit constant or square divisors.

\section{Elementary observations on factoring formal power series}\label{sec:2}

The results in this section generalize \cite[Propositions 3.3 and 3.4]{gil1}.

\begin{proposition}\label{simplefactor1}
Let $R$ be an integral domain and $f \in R[[X]]$.  If $f_0$ is irreducible in $R$, then $f$ is irreducible in $R[[X]]$.
\end{proposition}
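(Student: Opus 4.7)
The plan is to show directly that any factorization $f = gh$ in $R[[X]]$ must be trivial, in the sense that one of the factors is a unit. First I would record the easy preliminaries: since $f_0$ is irreducible in $R$, by definition $f_0$ is nonzero and not a unit, hence $f$ itself is nonzero and not a unit of $R[[X]]$, so the question of irreducibility is sensibly posed.

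Next, suppose $f = gh$ with $g, h \in R[[X]]$. Comparing constant terms gives $f_0 = g_0 h_0$ in $R$. Because $f_0$ is irreducible in $R$, one of $g_0, h_0$ must be a unit of $R$; without loss of generality say $g_0 \in R^\times$.

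The key step is then to invoke the standard fact that a formal power series $g \in R[[X]]$ is a unit in $R[[X]]$ if and only if its constant term $g_0$ is a unit in $R$. (The inverse is constructed recursively from $g_0^{-1}$.) Applying this, $g$ is a unit in $R[[X]]$, so the factorization $f = gh$ is trivial, which is precisely what is needed to conclude that $f$ is irreducible.

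I don't expect any genuine obstacle here; the entire argument reduces to the unit criterion for $R[[X]]$ together with the definition of irreducibility of $f_0$. The proof is essentially a two-line observation, and will be phrased as such.
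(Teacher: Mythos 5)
Your argument is correct and is essentially identical to the paper's proof: compare constant terms, use irreducibility of $f_0$ to force $g_0$ or $h_0$ to be a unit, and conclude via the standard fact that a power series is a unit in $R[[X]]$ exactly when its constant term is a unit in $R$. No issues.
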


\begin{proof}
If $f = gh$, then $f_0 = g_0 h_0$, whence either $g_0$ or $h_0$ is a unit in $R$, and therefore either $g$ or $h$ is a unit in $R[[X]]$.
\end{proof}

\begin{proposition}\label{simplefactor}
Let $R$ be a ring and $f \in R[[X]]$.  Suppose that $f_0 = ab$, where $(a, b) = R$.  Choose $r, s \in R$ with $r a + s b = 1$.  Set $g_0 = 0$ and let
$$g_n = f_n - rs \sum_{i = 1}^{n-1} g_i g_{n-i}$$ for all positive integers $n$.  Then $g = \sum_{n = 0}^\infty g_n X^n \in R[[X]]$ is the unique solution to the equation
$$f = (a + s g)(b + r g)$$
with $g_0 = 0$, and one has $(a+sg, b+rg) = R[[X]]$.
\end{proposition}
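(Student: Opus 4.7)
The plan is to treat the identity $f = (a+sg)(b+rg)$ as a fixed-point equation for $g$, expand the right-hand side, and compare coefficients to recover the stated recursion uniquely.

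First, I would expand
\[
(a+sg)(b+rg) = ab + (ar+bs)g + rs\,g^2 = f_0 + g + rs\, g^2,
\]
using that $ab = f_0$ and $ra+sb = 1$. Consequently the equation $f = (a+sg)(b+rg)$ is equivalent to $g = (f - f_0) - rs\,g^2$. Reading off the coefficient of $X^n$ on both sides gives $g_0 = f_0 - f_0 - rs\,g_0^2$, which together with the requirement $g_0 = 0$ is satisfied trivially, and for $n \geq 1$ it gives $g_n = f_n - rs\sum_{i=0}^{n} g_i g_{n-i}$. Since $g_0 = 0$, the $i=0$ and $i=n$ terms vanish, leaving exactly the stated recursion
\[
g_n = f_n - rs \sum_{i=1}^{n-1} g_i g_{n-i}.
\]
This recursion determines $g_n$ from $g_1, \ldots, g_{n-1}$, so there is a unique $g \in R[[X]]$ with $g_0 = 0$ solving $f = (a+sg)(b+rg)$.

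For the coprimality assertion, I would compute
\[
r(a+sg) + s(b+rg) = (ra+sb) + 2rs\,g = 1 + 2rs\,g.
\]
The element $1 + 2rs\,g$ has constant term $1$ and is therefore a unit in $R[[X]]$; multiplying the identity above by its inverse exhibits $1$ as an $R[[X]]$-linear combination of $a+sg$ and $b+rg$, so $(a+sg, b+rg) = R[[X]]$.

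There is no real obstacle here; the only mild subtlety is noticing that while $r(a+sg) + s(b+rg)$ is not literally $1$ (the cross terms contribute $2rs\,g$), it nevertheless lies in $1 + XR[[X]]$ and is thus a unit, so one can divide out to land in the ideal.
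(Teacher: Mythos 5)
Your argument is correct and is essentially identical to the paper's proof: both reduce $f=(a+sg)(b+rg)$ to the fixed-point equation $g=f-f_0-rsg^2$, extract the recursion by comparing coefficients, and observe that $r(a+sg)+s(b+rg)=1+2rsg$ is a unit in $R[[X]]$. You have merely written out the coefficient comparison in more detail than the paper does.
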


\begin{proof}
The equation for $g$ is equivalent to $g = f -f_0 -rsg^2$, which, assuming $g_0 = 0$, is equivalent to the given recurrence relation.  Also $r(a+sg) + s(b+rg) = 1+2rsg$ is a unit in $R[[X]]$.
\end{proof}

\begin{corollary}\label{PIDlemma}
Let $R$ be an integral domain and $f \in R[[X]]$.
\begin{enumerate}
\item If $f$ is irreducible in $R[[X]]$, then $(f_0) \neq R$ and for any $a, b \in R$ with $f_0 = ab$ and $(a,b) = R$ one has $(a) = R$ or $(b) = R$.  Moreover, the converse holds if $(f_1) = R$.
\item If $R$ is a PID and $f$ is irreducible in $R[[X]]$, then either $(f) = (X)$ or $f_0$ is associate to a power of a prime in $R$.
\item Suppose that $R$ is a UFD but not a PID.  Then there exist irreducible power series in $R[[X]]$ whose constant term is neither zero nor associate to a power of a prime.  In fact, if  $\pi, \sigma \in R$ are nonassociate primes with $(\pi,\sigma) \neq R$, then $\pi \sigma + XU$ is irreducible in $R[[X]]$ for any unit $U \in R[[X]]$.
\end{enumerate}
\end{corollary}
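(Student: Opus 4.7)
The plan is to deduce all three parts from Proposition \ref{simplefactor} together with the UFD/PID structure of $R$. For Part (1), the forward direction follows at once from that proposition: if $f_0 = ab$ with $(a,b) = R$ and $f$ is irreducible, then the factorization $f = (a+sg)(b+rg)$ forces one factor to be a unit in $R[[X]]$, hence its constant term ($a$ or $b$) to be a unit in $R$; meanwhile $(f_0) = R$ would make $f$ itself a unit. For the converse under the additional assumption $(f_1) = R$, I would assume a factorization $f = gh$ with neither factor a unit and case-split on the ideal $(g_0, h_0)$. If $(g_0, h_0) = R$ the hypothesis supplies a unit among $g_0, h_0$, contradicting both $g, h$ being nonunits; otherwise $(g_0, h_0) \subseteq \mm$ for some maximal ideal $\mm$, and the identity $f_1 = g_0 h_1 + g_1 h_0$ puts $f_1 \in \mm$, contradicting $(f_1) = R$.

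For Part (2), if $f_0 = 0$ then $X \mid f$ in $R[[X]]$ and irreducibility forces $f$ associate to $X$, so $(f) = (X)$. Otherwise $f_0 \neq 0$, and if two non-associate primes $\pi_1, \pi_2$ were to appear in the UFD factorization of $f_0$, I would set $a$ equal to the $\pi_1$-part of $f_0$ and $b = f_0/a$; since $a$ and $b$ share no common prime factor and $R$ is a PID, $(a,b) = R$. Part (1) would then force $(a) = R$ or $(b) = R$, contradicting that both $\pi_1$ and $\pi_2$ divide $f_0$ nontrivially. Hence $f_0$ is associate to a prime power.

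For Part (3), I must first produce non-associate primes $\pi, \sigma \in R$ with $(\pi, \sigma) \neq R$. By the Cohen--Kaplansky theorem that a domain all of whose prime ideals are principal is a PID, there is a non-principal prime ideal $P$ of $R$. Pick a prime element $\pi \in P$ (available since $R$ is a UFD), and use $(\pi) \subsetneq P$ to choose $\tau \in P \setminus (\pi)$. Since $P$ is prime, some prime factor $\sigma$ of $\tau$ lies in $P$, and $\sigma$ cannot be associate to $\pi$, for otherwise $\pi \mid \tau$ would contradict $\tau \notin (\pi)$. Thus $(\pi, \sigma) \subseteq P \neq R$. To finish, I apply the converse of Part (1) to $f = \pi \sigma + XU$: here $f_0 = \pi\sigma$ is a nonunit and $f_1$ equals the constant term of $U$, hence a unit, giving $(f_1) = R$. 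For any factorization $\pi\sigma = ab$ in the UFD $R$, unique factorization forces (up to order and units) $\{a,b\}$ to be either $\{1, \pi\sigma\}$ or $\{\pi, \sigma\}$; the second alternative gives $(a,b) = (\pi,\sigma) \neq R$ and is excluded, leaving $a$ or $b$ a unit. Irreducibility then follows from Part (1).

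The chief obstacle I anticipate is the existence step in Part (3) producing non-associate primes $\pi, \sigma$ with $(\pi,\sigma) \neq R$ in a non-PID UFD; every other step amounts to routine bookkeeping inside the UFD or PID.
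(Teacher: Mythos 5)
Your proof is correct and follows the route the paper intends: the corollary is stated without proof as an immediate consequence of Proposition \ref{simplefactor}, and your deduction of all three parts from that proposition (together with the identity $f_1 = g_0h_1 + g_1h_0$ for the converse in (1) and the coprime splitting of $f_0$ in (2)) is exactly the expected argument. The one genuinely nontrivial detail you supply beyond what the paper makes explicit --- producing nonassociate primes $\pi,\sigma$ with $(\pi,\sigma)\neq R$ in a non-PID UFD via Cohen's theorem on non-principal prime ideals --- is sound and correctly fills the existence step in part (3).
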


Two elements $a$ and $b$ of a ring $R$ are said to be {\it coprime} if $(a,b) = R$.  In a PID this condition holds if and only if $a$ and $b$ share no nonunit factors.

\begin{proposition}\label{factorbasic}
Let $R$ be a ring and $f \in R[[X]]$.  Suppose that $f_0 = a_1 a_2 \cdots a_k$, where the $a_i$ are pairwise coprime in $R$.   Choose $b_1, b_2, \ldots, b_k \in R$ such that 
$\sum_{i = 1}^k b_i \prod_{j \neq i} a_j = 1.$
Then there exists a unique $g \in XR[[X]]$ such that
$$f = (a_1 + b_1 g)(a_2 + b_2 g)\cdots (a_k + b_k g).$$
Moreover, the $a_i + b_i g$ are pairwise coprime in $R[[X]]$.   In particular, if $f_0$ is a product of $k$ pairwise coprime nonzero nonunits of $R$, then $f$ is a product of $k$ pairwise coprime nonzero nonunits of $R[[X]]$.
\end{proposition}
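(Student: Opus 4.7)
The plan is to prove existence and uniqueness of $g$ by a coefficient-by-coefficient recursion, and then read off pairwise coprimality of the factors and the ``in particular'' statement as easy consequences. First I would expand
\[
\prod_{i=1}^k (a_i + b_i g) = \sum_{S \subseteq \{1,\ldots,k\}} \Bigl(\prod_{i \notin S} a_i\Bigr) \Bigl(\prod_{i \in S} b_i\Bigr) g^{|S|}
\]
and sort by $|S|$. The $|S|=0$ contribution is $a_1 a_2 \cdots a_k = f_0$; the $|S|=1$ contribution is $\bigl(\sum_i b_i \prod_{j \neq i} a_j\bigr) g = g$ by the hypothesis on the $b_i$; and the $|S| \geq 2$ contributions collect into $g^2 H(g)$ for some $H \in R[g]$. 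So the desired factorization is equivalent to the fixed point equation $g = (f-f_0) - g^2 H(g)$ in $XR[[X]]$.

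Next, I would argue that this equation has a unique solution in $XR[[X]]$. Since $f - f_0$ and $g$ both lie in $XR[[X]]$, and each monomial $g^m$ with $m \geq 2$ has $X^n$-coefficient equal to $\sum g_{i_1}\cdots g_{i_m}$ summed over $i_1 + \cdots + i_m = n$ with $i_j \geq 1$ (which forces each $i_j \leq n-1$), the coefficient of $X^n$ in $(f-f_0) - g^2 H(g)$ depends only on $f_0,\ldots,f_n$ and $g_1,\ldots,g_{n-1}$. The equation therefore determines $g_n$ uniquely by induction on $n$, giving both existence and uniqueness, and displays $g$ as a sum of products of the $f_m$'s, $a_i$'s, and $b_i$'s with integer coefficients.

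For pairwise coprimality of the $a_i + b_i g$, I would use the coprimality of $a_i$ and $a_j$ in $R$ to pick $r,s \in R$ with $r a_i + s a_j = 1$ and note that
\[
r(a_i + b_i g) + s(a_j + b_j g) = 1 + (r b_i + s b_j) g
\]
has constant term $1$ and is therefore a unit in $R[[X]]$. The ``in particular'' claim is then immediate: $a_i + b_i g$ has constant term $a_i$, which by assumption is a nonzero nonunit of $R$, so $a_i + b_i g$ is a nonzero nonunit of $R[[X]]$. The only real thing to notice is that the hypothesis $\sum_i b_i \prod_{j \neq i} a_j = 1$ is engineered precisely to make the linear-in-$g$ coefficient of the product equal $1$, thereby enabling the recursive solution; beyond that observation the argument is routine, and specializing to $k = 2$ recovers Proposition \ref{simplefactor} with $g$ obeying exactly the recurrence displayed there.
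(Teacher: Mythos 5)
Your proposal is correct and follows essentially the same route as the paper: rewrite the desired factorization as a fixed-point equation $g = (f - f_0) + (\text{terms in } g^2, \ldots, g^k)$, solve it by a coefficient recursion in $XR[[X]]$, and deduce coprimality from $ra_i + sa_j = 1$ giving a unit $1 + (rb_i + sb_j)g$. You simply spell out the details (the subset expansion of the product and the unit computation) that the paper compresses into ``the rest of the proposition is then clear.''
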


\begin{proof}
The given equation for $g$ has the form 
$$g = f - f_0 + c_2 g^2 + c_3g^3 + \cdots + c_n g^n,$$
where $c_i \in R$ for all $i$, which, assuming $g_0 = 0$, is equivalent to a recurrence relation for the $g_i$
of the form $$g_i = f_i +  F_i(g_1, g_2, \ldots, g_{i-1}),$$
where $F_i \in R[X_1, X_2, \ldots, X_{i-1}]$ for all $i$.	The existence and uniqueness of a solution $g \in XR[[X]]$ follows, and the rest of the proposition is then clear.
\end{proof}

\begin{corollary}
Let $R$ be a PID and $f \in R[[X]]$.  If $f_0$ has at least $k$ pairwise nonassociate prime factors in $R$, then the prime factorization of $f$ in the UFD $R[[X]]$ is of length at least $k$.
\end{corollary}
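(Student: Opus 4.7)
My proof plan is to reduce this corollary directly to Proposition \ref{factorbasic}. The key first step is to convert the hypothesis about pairwise nonassociate prime factors of $f_0$ into a decomposition of $f_0$ as a product of $k$ pairwise coprime nonzero nonunits. Concretely, if $\pi_1, \ldots, \pi_m$ are the pairwise nonassociate primes dividing $f_0$ in $R$ (with $m \geq k$), write $f_0 = \pi_1^{e_1} \cdots \pi_m^{e_m} u$ for some positive integers $e_i$ and unit $u \in R$. I would then set $a_i = \pi_i^{e_i}$ for $i = 1, \ldots, k-1$ and bundle the tail $a_k = \pi_k^{e_k} \cdots \pi_m^{e_m} u$, yielding $f_0 = a_1 a_2 \cdots a_k$. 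Each $a_i$ is a nonzero nonunit since it contains at least one of the $\pi_i$, and the $a_i$ are pairwise coprime because in a PID coprimality is equivalent to sharing no common prime factor (as noted just before Proposition \ref{factorbasic}), and the $a_i$ involve disjoint sets of primes.

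With this decomposition in hand, Proposition \ref{factorbasic} applies and produces a factorization $f = g_1 g_2 \cdots g_k$ in $R[[X]]$ in which the $g_i$ are pairwise coprime nonzero nonunits. Since $R$ is a PID, $R[[X]]$ is a UFD, so each $g_i$ admits a prime factorization of length at least one. Pairwise coprimality in $R[[X]]$ implies that no prime factor of any $g_i$ is associate to a prime factor of $g_j$ for $j \neq i$ (else that prime would lie in $(g_i, g_j)$, contradicting $(g_i, g_j) = R[[X]]$). Consequently, concatenating the prime factorizations of the $g_i$ produces a prime factorization of $f$ whose total length is at least $k$.

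I do not expect any genuine obstacle here: the corollary is essentially a quantitative repackaging of Proposition \ref{factorbasic}, and the only step worth stating explicitly is the PID-specific observation that nonassociate primes give rise to pairwise coprime factors.
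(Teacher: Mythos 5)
Your argument is correct and is exactly the route the paper intends: the corollary is stated without proof as an immediate consequence of Proposition \ref{factorbasic}, obtained by grouping the prime-power factors of $f_0$ into $k$ pairwise coprime nonzero nonunits and counting prime factors of the resulting $k$ nonunit factors of $f$ in the UFD $R[[X]]$. The only (harmless) superfluity is the coprimality-of-prime-factors observation at the end, since the length count needs nothing beyond each $g_i$ contributing at least one prime.
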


\section{Primality criteria}\label{sec:3}

In this section we prove Theorems \ref{Weierstrass}, \ref{irredtest}, and \ref{irredtest2} of the introduction. 

For any ring $R$ and any nonnegative integer $n$, let $\tau_n$ and $\alpha_n$ denote the $R$-linear operators on $R[[X]]$ acting by $\tau_n: f \longmapsto \sum_{i = 0}^\infty f_{n+i} X^{i}$ and $\alpha_n: f \longmapsto f - X^n\tau_n(f) = \sum_{i = 0}^{n-1} f_i X^i$, respectively.  

\begin{proof}[Proof of Theorem \ref{Weierstrass}]
Let $\ff$ be any $\aaa$-filtration of $R$ with respect to which $R$ is complete.  Let $V \in R[[X]]$.  Then $Vf \equiv V_0 f_nX^n \, (\Mod \, (\aaa [[X]]+X^{n+1}R[[X]]))$, and therefore $Vf$ is automatically an $\aaa$-Weierstrass polynomial of degree $n$ provided it is a monic polynomial of degree at most $n$, in which case $V$ is a unit in $R[[X]]$.  So it remains to show that there is a unique $V \in R[[X]]$ such that $Vf$ is a monic polynomial of degree $n$.  Now, $Vf$ is a monic polynomial of degree $n$ if and only if $\tau(Vf) = 1$, where $\tau = \tau_n$.  Since  $\tau(Vf) = \tau(V (\alpha(f) + X^n\tau(f))) = \tau(V \alpha(f)) + V\tau(f)$, where $\alpha = \alpha_n$, this in turn is equivalent to
$$\tau(V \alpha(f)) + V\tau(f) = 1.$$
  Put $Z = V\tau(f)$.  Since $\tau(f)$ is a unit, the above equation is equivalent to
$$(T+I)(Z) = \tau(\alpha(f) \tau(f)^{-1}Z) + Z = 1,$$
where $I$ is the identity operator and $T$ the operator $\tau \circ (\alpha(f)\tau(f)^{-1} -)$ on $R[[X]]$.
Since $\alpha(f) \in \aaa R[X]$, the operator $T$ maps $\aaa^i [[X]]$ into $\aaa^{i+1} [[X]]$, so $\im T^i \subseteq\aaa^i [[X]]$, for all $i$.  Thus for every $j$ one has $\im T^i \subseteq \aaa^i[[X]] \subseteq \ff_j[[X]]$ for sufficiently large $i$. Therefore the operator $T+I$ on $R[[X]]$ is invertible with inverse $(T+I)^{-1} = \sum_{i = 0}^\infty (-1)^i T^i$.  Thus the equation $(T+I)(Z) = 1$ is equivalent to
$Z = (T+I)^{-1}(1)$, or equivalently $V = \tau(f)^{-1} (T+I)^{-1}(1)$.  Such a $V \in R[[X]]$ therefore exists and is unique.
\end{proof}

Next, Theorem \ref{irredtest} follows immediately from Corollary \ref{PIDlemma}(2) and the following theorem.

\begin{theorem}\label{maintheorem1}
Let $R$ be a ring and $\aaa$ an ideal of $R$, let $f \in R[[X]]$ be $\widehat{\aaa}$-distinguished in $\RRR_\aaa[[X]]$, and let $P$ be the Weierstrass polynomial in $\RRR_\aaa[X]$ associated to $f$.  Suppose that the $R[[X]]$-homomorphism $R[[X]]/(f) \longrightarrow \RRR_{(f_0)}[[X]]/(f)$ is an isomorphism, which holds if $f_0$ a nonzerodivisor in $R$ or $R$ is Noetherian.  Then each of the following conditions implies the next.
\begin{enumerate}
\item $\aaa^k \subseteq (f_0)$ for some positive integer $k$.
\item The $R$-homomorphism $\RRR_{(f_0)} \longrightarrow \RRR_\aaa$ is an isomorphism. 
\item 
There is a (unique) $R[X]$-isomorphism $R[[X]]/(f) \longrightarrow {\RRR_\aaa[X]/(P)}$ so that the diagram
\begin{eqnarray*}
\SelectTips{cm}{11}\xymatrix{
R[[X]]/(f) \ar[dr] \ar[r] \ar[d] & {\RRR_\aaa[X]/(P)} \ar[d]  \\
{\RRR_{(f_0)}[[X]]/(f)} \ar[r] & {\RRR_\aaa[[X]]/(f)}
}
\end{eqnarray*}
of $R[X]$-homomorphisms (in fact, $R[X]$-isomorphisms) is commutative.
\item $R[[X]]/(f) \cong \RRR_{\aaa}[X]/(P)$ as rings.
\item $f$ is prime in $R[[X]]$ if and only if $P$ is prime in $\RRR_{\aaa}[X]$.
\end{enumerate}
\end{theorem}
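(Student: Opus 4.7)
The plan is to establish the chain $(1)\Rightarrow(2)\Rightarrow(3)\Rightarrow(4)\Rightarrow(5)$, after extracting one consequence of the distinguished hypothesis. Since $\RRR_\aaa/\widehat{\aaa} \cong R/\aaa$, the assumption that $f$ is $\widehat{\aaa}$-distinguished of order $n$ in $\RRR_\aaa[[X]]$ pulls back to $R$: the coefficients satisfy $f_i \in \aaa$ for $i < n$ (so $(f_0) \subseteq \aaa$) and $f_n$ is a unit modulo $\aaa$. For $(1)\Rightarrow(2)$, combining $(f_0) \subseteq \aaa$ with $\aaa^k \subseteq (f_0)$ yields $(f_0)^m \subseteq \aaa^m$ and $\aaa^{km} \subseteq (f_0)^m$ for every $m$, so the $(f_0)$-adic and $\aaa$-adic topologies on $R$ are cofinal and their completions are canonically isomorphic.

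For $(2)\Rightarrow(3)$, three of the four arrows in the diagram are already understood: the left vertical arrow is an isomorphism by the standing hypothesis on $f$, the bottom arrow becomes an isomorphism by transport along the ring isomorphism $\RRR_{(f_0)} \cong \RRR_\aaa$ coming from (2), and the right vertical arrow is well defined because $f = UP$ with $U \in \RRR_\aaa[[X]]^\times$ forces $(f) = (P)$ in $\RRR_\aaa[[X]]$. Granting the claim below that this right arrow is itself an isomorphism, the top arrow is then forced to exist as a unique $R[X]$-algebra isomorphism making the square commute, by composing the three other isomorphisms and noting that every other arrow is $R[X]$-linear. The implication $(3)\Rightarrow(4)$ is immediate (forget the diagram), and $(4)\Rightarrow(5)$ follows because $R[[X]]/(f)$ (resp.\ $\RRR_\aaa[X]/(P)$) is an integral domain precisely when $f$ (resp.\ $P$) is prime in its ring.

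The main obstacle is the Weierstrass division statement hidden in the right arrow: every $g \in \RRR_\aaa[[X]]$ must admit a unique expression $g = PQ + S$ with $Q \in \RRR_\aaa[[X]]$ and $S \in \RRR_\aaa[X]$ of degree less than $n$. Uniqueness is immediate since $P$ is monic of degree $n$. For existence I plan to adapt the operator calculus of the proof of Theorem \ref{Weierstrass}: writing $P = X^n - E$ with $E \in \RRR_\aaa[X]$ of degree less than $n$ and coefficients in $\widehat{\aaa}$, the identity
$$ g \; = \; P\tau_n(g) \; + \; \bigl(E\tau_n(g) + \alpha_n(g)\bigr) $$
reduces the problem to iteratively correcting the remainder term $E\tau_n(g) \in \widehat{\aaa}[[X]]$. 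Each iteration strictly increases the $\widehat{\aaa}$-filtration index of the correction, so summing the resulting geometric-style series inside the filtration-complete ring $\RRR_\aaa[[X]]$ yields the desired $Q$ and $S$ and completes the proof.
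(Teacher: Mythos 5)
Your architecture coincides with the paper's: (1)$\Rightarrow$(2) via cofinality of the $(f_0)$-adic and $\aaa$-adic topologies (using $f_0\in R\cap\widehat{\aaa}=\aaa$), (2)$\Rightarrow$(3) by showing the left, bottom, and right arrows of the square are isomorphisms and defining the top arrow as their composite, and (3)$\Rightarrow$(4)$\Rightarrow$(5) trivially. The paper delegates the right vertical arrow to Lemma \ref{preplemma4}, whereas you re-derive it as a Weierstrass division theorem by the operator calculus of Theorem \ref{Weierstrass}; your existence argument (iterating the correction $g\mapsto E\tau_n(g)$, whose coefficients climb the $\widehat{\aaa}$-filtration, and summing in the complete ring) is sound and is essentially the computation the paper performs coefficientwise in proving Lemma \ref{preplemma4}(2).

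There is, however, one genuine gap: the claim that uniqueness of the division $g=PQ+S$ ``is immediate since $P$ is monic of degree $n$.'' That is false for a general monic polynomial: over any field, $(X-1)\cdot\sum_{i\geq 0}X^i=-1$, so $-1$ admits the two divisions $P\cdot\bigl(\sum_i X^i\bigr)+0$ and $P\cdot 0+(-1)$ by the monic degree-one polynomial $P=X-1$. Uniqueness --- equivalently, injectivity of $\RRR_\aaa[X]/(P)\longrightarrow\RRR_\aaa[[X]]/(P)$, which you need in order to invert the right arrow --- genuinely uses the Weierstrass property of $P$ together with $\bigcap_j\widehat{\aaa^j}=(0)$: if $Ph$ is a polynomial of degree less than $n$, then comparing coefficients of $X^{n+i}$ gives $h_i=-(h_{i+1}P_{n-1}+\cdots+h_{n+i}P_0)\in\widehat{\aaa}$ for all $i\geq 0$, and iterating places each $h_i$ in every power of $\widehat{\aaa}$, forcing $h=0$. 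This is exactly the paper's Lemma \ref{preplemma4}(1); alternatively, your own operator $I+\tau_n(E\,\cdot\,)$ is invertible for the same filtration reasons as in Theorem \ref{Weierstrass}, and its injectivity delivers the uniqueness. Either way the hypothesis that the lower coefficients of $P$ lie in $\widehat{\aaa}$ must be invoked, not just monicity; with that one step repaired the proof is complete.
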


To prove Theorem \ref{maintheorem1} we use the following four lemmas.

\begin{lemma}\label{preplemma2a}
Let $R$ be ring and $\aaa = (a_1, \ldots, a_k)$ a finitely generated ideal of $R$.  There is a surjective $R$-homomorphism $\varphi: R[[X_1, \ldots, X_k]] \longrightarrow \RRR_\aaa$ acting by $f \longmapsto f(a_1, \ldots, a_k)$ with  $\ker \varphi = \bigcap_n (\nn+\aaa^nR[[X_1, \ldots, X_k]])$, where $\nn = (a_1-X_1, \ldots, a_k-X_k)$, and one has $\widehat{\aaa^n} = \aaa^n \RRR_\aaa$ 
 for every nonnegative integer $n$.  Moreover, if $R$ is Noetherian then $\ker \varphi = \nn$.
\end{lemma}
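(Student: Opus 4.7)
The plan is to treat the four assertions in turn. First, I would define $\varphi$ as the unique continuous $R$-algebra homomorphism $R[[X_1,\ldots,X_k]] \to \RRR_\aaa$ sending $X_i$ to $a_i$, where $R[[X_1,\ldots,X_k]]$ carries its $(X_1,\ldots,X_k)$-adic topology and $\RRR_\aaa$ carries the $\widehat{\aaa}$-adic topology; concretely, $\varphi(\sum_I c_I X^I) = (\sum_{|I|<n} c_I a_1^{i_1}\cdots a_k^{i_k} + \aaa^n)_n$. For surjectivity, given $(r_n)_n \in \varprojlim R/\aaa^n$, I would lift $r_1$ to an element of $R$ and then, using that $\aaa^n$ is spanned over $R$ by the monomials $a^I$ with $|I|=n$, write each successive difference $r_{n+1}-r_n \in \aaa^n$ as $g_n(a_1,\ldots,a_k)$ for a homogeneous $g_n \in R[X_1,\ldots,X_k]$ of degree $n$; the preimage of $(r_n)_n$ is then $r_1 + \sum_{n \geq 1} g_n$.

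The crucial step is identifying, for each $n$, the kernel $K_n$ of $\varphi_n \colon R[[X_1,\ldots,X_k]] \to R/\aaa^n$ (the composition of $\varphi$ with the projection) with $\nn + \aaa^n R[[X_1,\ldots,X_k]]$. The inclusion $\supseteq$ is immediate. For $\subseteq$, I would split $f = g + h$ with $g \in R[X_1,\ldots,X_k]$ of total degree less than $n$ and $h$ in the ideal $\mm^n$, where $\mm = (X_1,\ldots,X_k)$. A direct calculation gives $\varphi_n(h) = 0$ since $h$ has no monomials of degree below $n$, so $\varphi_n(f) = 0$ forces $g(a_1,\ldots,a_k) \in \aaa^n$ in $R$. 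The standard multivariable Taylor identity produces polynomials $p_i$ with $g(X) - g(a_1,\ldots,a_k) = \sum_i (X_i - a_i)\, p_i(X)$, placing $g$ in $\aaa^n + \nn$. Since $X^I \equiv a^I \in \aaa^n$ modulo $\nn$ for every multi-index $I$ with $|I|=n$, one has $\mm^n \subseteq \aaa^n R[[X_1,\ldots,X_k]] + \nn$, and $h$ lies in the same set. Hence $f \in \nn + \aaa^n R[[X_1,\ldots,X_k]]$. Intersecting the $K_n$ over all $n$ yields the stated $\ker\varphi = \bigcap_n (\nn + \aaa^n R[[X_1,\ldots,X_k]])$.

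With this in hand, the closure identity $\widehat{\aaa^n} = \aaa^n \RRR_\aaa$ follows quickly. The inclusion $\supseteq$ is automatic. For $\subseteq$, surjectivity of $\varphi$ lets me write any $y \in \widehat{\aaa^n}$ as $\varphi(f)$ with $f \in K_n$, whence $y \in \varphi(\aaa^n R[[X_1,\ldots,X_k]]) \subseteq \aaa^n \RRR_\aaa$, because $\aaa^n R[[X_1,\ldots,X_k]]$ is the ideal generated by the finitely many monomials $a^I$ of total degree $n$, each already lying in $\aaa^n$.

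For the Noetherian case, the identity $\mm^n + \nn = \aaa^n R[[X_1,\ldots,X_k]] + \nn$ rewrites $\ker\varphi$ as $\bigcap_n (\nn + \mm^n)$, the closure of $\nn$ in the $\mm$-adic topology. Since $R$ is Noetherian, so is $R[[X_1,\ldots,X_k]]$, and $\mm$ lies in its Jacobson radical because $1-f$ is a unit for every $f \in \mm$ by the geometric series expansion in the $\mm$-adically complete ring $R[[X_1,\ldots,X_k]]$. Applying Krull's intersection theorem to the finitely generated module $R[[X_1,\ldots,X_k]]/\nn$ then yields $\bigcap_n \mm^n(R[[X_1,\ldots,X_k]]/\nn) = 0$, equivalently $\bigcap_n(\nn + \mm^n) = \nn$, so $\ker\varphi = \nn$. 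The most delicate step I anticipate is the $\subseteq$ direction for $K_n$, where the polynomial split, the Taylor identity, and the congruence $X_i \equiv a_i$ modulo $\nn$ must be combined without circularly relying on the closure identity that the preceding clause of the lemma establishes.
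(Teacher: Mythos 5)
Your proof is correct and complete. Note that the paper gives no argument of its own for this lemma---it defers entirely to the proofs of Theorems 17.4 and 17.5 in Nagata's \emph{Local Rings}---and what you have written is essentially that standard argument carried out in full: the split of $f$ into its degree-$<n$ truncation plus a tail in $\mm^n$, the Taylor identity $g(X)-g(a)=\sum_i(X_i-a_i)p_i(X)$, the congruence $X^I\equiv a^I$ modulo $\nn$ yielding $\mm^n+\nn=\aaa^nR[[X_1,\ldots,X_k]]+\nn$, and Krull's intersection theorem applied to the finitely generated module $R[[X_1,\ldots,X_k]]/\nn$ in the Noetherian case. The one step you leave implicit is the identification $\widehat{\aaa^n}=\ker\bigl(\RRR_\aaa\to R/\aaa^n\bigr)$, which is what lets you write an arbitrary $y\in\widehat{\aaa^n}$ as $\varphi(f)$ with $f\in K_n$; this is the standard description of the closure of the image of $\aaa^n$ in the inverse-limit topology and deserves a sentence, but it is not a gap. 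Your self-contained treatment is a genuine improvement in readability over the bare citation, at the cost of a page of routine verification.
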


\begin{proof}
This follows from the proof of  \cite[Theorems 17.4 and 17.5]{nag}.
\end{proof}

\begin{lemma}\label{preplemma2}
Let $R$ be ring and $a \in R$.  Suppose  that $a$ is a nonzerodivisor in $R$ or $R$ is Noetherian.  Then $\RRR_{(a)} \cong R[[X]]/(a-X)$ as $R$-algebras and $\widehat{(a^n)} = a^n\RRR_{(a)}$ for every nonnegative integer $n$.  Moreover, if $a$ is a nonzerodivisor in $R$, then $a$ is a nonzerodivisor in $\RRR_{(a)}$.
\end{lemma}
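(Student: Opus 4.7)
The plan is to deduce everything from Lemma \ref{preplemma2a} applied to the principal ideal $\aaa = (a)$. That lemma furnishes a surjective $R$-homomorphism $\varphi : R[[X]] \longrightarrow \RRR_{(a)}$ acting by $f \longmapsto f(a)$, together with the equality $\widehat{(a^n)} = a^n\RRR_{(a)}$ for every $n \geq 0$, which is exactly the second assertion of the lemma. When $R$ is Noetherian, Lemma \ref{preplemma2a} additionally gives $\ker\varphi = (a-X)$ directly, which already produces the isomorphism $R[[X]]/(a-X) \cong \RRR_{(a)}$, so the only outstanding cases are the first assertion under the nonzerodivisor hypothesis and the final ``moreover'' clause.

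When $a$ is a nonzerodivisor but $R$ is not assumed Noetherian, I would establish $\ker\varphi = (a-X)$ by a direct coefficient-wise comparison. Writing $(a-X)g = \sum_{i\geq 0}(ag_i - g_{i-1})X^i$ with the convention $g_{-1} = 0$, the equation $f = (a-X)g$ in $R[[X]]$ unravels into the recurrence $ag_i = f_i + g_{i-1}$. Using that $a$ is a nonzerodivisor, an easy induction on $i$ shows that this recurrence has a (necessarily unique) solution in $R$ if and only if $a^{i+1}$ divides $S_i := \sum_{j=0}^{i} f_j a^j$ in $R$ for every $i \geq 0$, in which case $g_i = S_i/a^{i+1}$. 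On the other hand, the $n$-th component of $\varphi(f) \in \varprojlim R/(a^n)$ is precisely the residue of $S_{n-1}$ modulo $a^n$, so $\varphi(f) = 0$ is equivalent to $S_{n-1} \in (a^n)$ for all $n \geq 1$, which is the same condition. Hence $\ker\varphi = (a-X)$.

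For the final assertion, suppose $a$ is a nonzerodivisor in $R$ and $ax = 0$ for some $x = (x_n)_n \in \RRR_{(a)} = \varprojlim R/(a^n)$. Lifting each $x_n$ to $\tilde{x}_n \in R$, the hypothesis yields $a\tilde{x}_n \in (a^n)$, say $a\tilde{x}_n = a^n r_n$; cancellation of $a$ (legitimate in $R$) forces $\tilde{x}_n \in (a^{n-1})$, so $x_{n-1} = 0$ in $R/(a^{n-1})$ for every $n \geq 1$, whence $x = 0$. The one nontrivial step in the whole argument is the equivalence between solvability of the recurrence $ag_i = f_i + g_{i-1}$ and the coefficient description of $\ker\varphi$, and even this reduces to the telescoping identity $S_i = S_{i-1} + a^i f_i$, which makes the two divisibility conditions literally the same; everything else is bookkeeping once Lemma \ref{preplemma2a} is in hand.
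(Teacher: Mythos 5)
Your proof is correct, and it establishes the same two key facts as the paper---that $\ker\varphi=(a-X)$ when $a$ is a nonzerodivisor, and that $a$ remains a nonzerodivisor in $\RRR_{(a)}$---but by a genuinely different mechanism for the kernel. The paper does not solve the recurrence $ag_i=f_i+g_{i-1}$ outright; it invokes the description $\ker\varphi=\bigcap_n\bigl((a-X)+(a^n)R[[X]]\bigr)$ from Lemma \ref{preplemma2a} to write $f=(a-X)g_n+X^nh_n$ for each $n$, observes that the nonzerodivisor hypothesis forces the coefficients $(g_n)_i$ for $i<n$ to be independent of $n$, and passes to the limit $G=\lim_n g_n$ to conclude $f=(a-X)G$. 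Your route uses Lemma \ref{preplemma2a} only for surjectivity of $\varphi$ and for $\widehat{(a^n)}=a^n\RRR_{(a)}$, and instead characterizes solvability of the recurrence by the condition $a^{i+1}\mid S_i$ for all $i$, which you correctly identify with the componentwise vanishing of $f(a)$ in $\varprojlim R/(a^n)$; this buys an explicit formula $g_i=S_i/a^{i+1}$ and a self-contained kernel computation at the cost of redoing a divisibility bookkeeping that the paper outsources to the $\bigcap$-description. For the final clause the paper argues inside $R[[X]]/(a-X)$ (from $af=(a-X)g$ it deduces $a\mid g$ and hence $f\in(a-X)$), whereas you argue directly in the inverse limit by lifting components and cancelling $a$; both are equally short and both are valid. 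Your telescoping identity $a^{i+1}g_i=S_i$ and the identification of the $n$-th component of $\varphi(f)$ with $S_{n-1}$ modulo $(a^n)$ both check out.
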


\begin{proof}
By Lemma \ref{preplemma2a} we may suppose that $a$ is a nonzerodivisor in $R$.  Let $\varphi: R[[X]] \longrightarrow \RRR_{(a)}$ denote the surjective $R$-homomorphism $f \longmapsto f(a)$.  Let $f \in \ker \varphi$.  By Lemma \ref{preplemma2a} one has $f = (a-X)g_n + X^nh_n$ for some $g_n, h_n \in R[[X]]$ for all $n \geq 1$.  Then $f_0 = a(g_n)_0$ and $f_i = a(g_n)_i-(g_n)_{i-1}$ for all $i < n$.  Since $a$ is a nonzerodivisor, this implies that $(g_n)_i$ is uniquely determined for $i < n$, so $(g_m)_i = (g_n)_i$ if $i < n \leq m$.  Thus $G = \lim_n g_n$ exists in $R[[X]]$ and $f = \lim_n ( (a-X)g_n+ X^n h_n) = (a-X)G$.  Therefore $\ker \varphi = (a-X)$.

Suppose now that $af = 0$ in $R[[X]]/(a-X)$, where $f \in R[[X]]$; say, $af= (a-X)g$ with $g \in R[[X]]$.  Then $a f_i = ag_i-g_{i-1}$ for all $i \geq 1$, so $a$ divides $g$ in $R[[X]]$.  Say $g = ah$ with $h \in R[[X]]$.  Then $af = a(a-X)h$, so $f = (a-X)h$ and therefore $f = 0$ in $R[[X]]/(a-X)$.  Thus $a$ is a nonzerodivisor in $\RRR_{(a)} \cong R[[X]]/(a-X)$.
\end{proof}

\begin{lemma}\label{newlemma}
Let $R$ be a ring and $f \in R[[X]]$.  Then $S = R[[X]]/(f)$ is complete in the $f_0S$-adic topology.  Moreover, if $f_0$ is a nonzerodivisor in $R$ or $R$ is Noetherian, then $S$ is complete with respect to $f_0 S$ and the $R[[X]]$-homomorphism $R[[X]]/(f) \longrightarrow \RRR_{(f_0)}[[X]]/(f)$ is an isomorphism.
\end{lemma}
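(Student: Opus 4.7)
The plan is to prove the two assertions of the lemma in sequence. For the first assertion (completeness in the $f_0 S$-adic topology), I would exploit the identity $f = f_0 + X\tau_1(f)$ in $R[[X]]$, which shows that $f_0 = Xg$ in $S = R[[X]]/(f)$, where $g$ denotes the image of $-\tau_1(f)$. A Cauchy sequence in $S$, reduced without loss of generality to a telescoping one with $s_{n+1} - s_n = f_0^n t_n = X^n g^n t_n$, can be lifted to an $X$-adically convergent series $\tilde s = \tilde s_0 + \sum_{n \geq 0} X^n \tilde u_n$ in $R[[X]]$, where $\tilde u_n$ is a lift of $g^n t_n$. The image $s$ of $\tilde s$ in $S$ is the desired $f_0$-adic limit of $(s_n)$, because the tail $\sum_{n \geq N} X^n g^n t_n$ in $S$ can be rewritten as $f_0^N \cdot \sum_{m \geq 0} X^m g^m t_{N+m}$ using $Xg = f_0$, exhibiting $s - s_N \in f_0^N S$.

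For the second assertion, assume $f_0$ is a nonzerodivisor in $R$ or $R$ is Noetherian. Lemma~\ref{preplemma2} (nonzerodivisor case) or Lemma~\ref{preplemma2a} (Noetherian case) gives $\widehat{(f_0^n)} = f_0^n \RRR_{(f_0)}$ for every $n$, hence the natural isomorphism $\RRR_{(f_0)}/f_0^n \RRR_{(f_0)} \cong R/f_0^n R$. Because $f_0^n$ is principal, this extends to $\RRR_{(f_0)}[[X]]/f_0^n \RRR_{(f_0)}[[X]] \cong R[[X]]/f_0^n R[[X]]$ and, after quotienting by $f$, to the crucial natural isomorphism $T/f_0^n T \cong S/f_0^n S$ for every $n$, where $T = \RRR_{(f_0)}[[X]]/(f)$.

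The main obstacle I expect is proving $S$ is Hausdorff in the $f_0 S$-adic topology; combined with Part~1 this will give completeness with respect to $f_0 S$. Since $f_0 \in XS$, it suffices to show $\bigcap_n X^n S = 0$. In the nonzerodivisor case, I would take $h \in R[[X]]$ whose class lies in the intersection, write $h = X^n a_n + f b_n$ for each $n$, and use $f = f_0 + X\tau_1(f)$ together with the nonzerodivisor property of $f_0$ to run a coefficient-by-coefficient induction on $f(b_n - b_m) \in X^m R[[X]]$, forcing $(b_n)$ to be Cauchy in the $X$-adic topology of $R[[X]]$; its limit $b$ then satisfies $h = fb$. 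In the Noetherian case, $S$ is itself Noetherian, so the Krull intersection theorem yields that $\bigcap_n X^n S$ is annihilated by some $1 + Xr \in S$; lifting to $R[[X]]$, the element $1 + X\tilde r$ is a unit by the $X$-adic completeness and Hausdorffness of $R[[X]]$, so any lift of an element in the intersection divides into $fR[[X]]$, forcing the element to vanish in $S$.

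Finally, I would deduce the isomorphism $\phi: S \to T$ by separately establishing surjectivity and injectivity. Surjectivity proceeds by iterative approximation: given $\tilde t_0 \in \RRR_{(f_0)}[[X]]$, decompose each coefficient as $c_i = r_i + f_0 c_i'$ with $r_i \in R$ and $c_i' \in \RRR_{(f_0)}$ (legitimate by $R/f_0 R \cong \RRR_{(f_0)}/f_0 \RRR_{(f_0)}$), producing $\tilde t_0 = \tilde g_0 + f\tilde h_0 + \tilde t_1$ in $\RRR_{(f_0)}[[X]]$ with $\tilde t_1 = -X\tau_1(f)\tilde h_0$ of strictly greater $X$-adic valuation; iterating and summing in the $X$-adic topologies of $R[[X]]$ and $\RRR_{(f_0)}[[X]]$ yields $\tilde t_0 = \tilde g + f\tilde h$ with $\tilde g \in R[[X]]$. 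Injectivity is immediate: if $\phi(s) = 0$ in $T$, then $s$ vanishes in every $T/f_0^n T$, hence in every $S/f_0^n S$ via the isomorphism of the previous paragraph, so $s \in \bigcap_n f_0^n S = 0$ by the Hausdorffness step.
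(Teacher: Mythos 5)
Your proof is correct, and its skeleton coincides with the paper's: convergence of $f_0$-adic series via the identity $f_0 = -X\tau_1(f)$ in $S$ and $X$-adic summation of lifts; Hausdorffness in the nonzerodivisor case by a coefficient-by-coefficient recursion exploiting that $f_0$ is a nonzerodivisor (you show $\bigcap_n(f,X^n)=(f)$, the paper the slightly weaker $\bigcap_n(f,f_0^n)=(f)$, by the same mechanism); and the identification of $S$ with $\RRR_{(f_0)}[[X]]/(f)$ via the isomorphisms $\RRR_{(f_0)}/f_0^n\RRR_{(f_0)}\cong R/f_0^nR$ from Lemma \ref{preplemma2}. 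You diverge in two places, both legitimately. For the Noetherian Hausdorffness the paper quotes Matsumura (Theorem 8.7 and Exercise 8.2) to transfer completeness from $R[[X]]$ to $S$, whereas you use the Krull intersection theorem plus the observation that the annihilating element $1+xr$ lifts to a unit of $R[[X]]$; yours is more self-contained, the paper's delivers completeness (not just Hausdorffness) in one stroke. For the isomorphism $S\to\RRR_{(f_0)}[[X]]/(f)$, the paper builds a map from $\RRR_{(f_0)}[[X]]/(f)$ into $\varprojlim (R/(f_0^n))[[X]]/(f)\cong\widehat{S}_{f_0S}\cong S$ and reads off injectivity from a commutative triangle, with a rather terse surjectivity step; your explicit division-with-remainder (writing any $\tilde t_0\in\RRR_{(f_0)}[[X]]$ as $\tilde g+f\tilde h$ with $\tilde g\in R[[X]]$ by iterating the coefficientwise decomposition modulo $f_0$ and trading $f_0$ for $f-X\tau_1(f)$) together with injectivity via $T/f_0^nT\cong S/f_0^nS$ and $\bigcap_n f_0^nS=0$ is arguably more transparent and fills in the surjectivity detail the paper leaves implicit. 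One small point to make explicit when writing this up: in the iteration, choose $\tilde g_n,\tilde h_n\in X^n\cdot(\,\cdot\,)$ so that both series converge $X$-adically, and note that the isomorphisms $T/f_0^nT\cong S/f_0^nS$ are induced by $R[[X]]\hookrightarrow\RRR_{(f_0)}[[X]]$ so that they are compatible with $\phi$.
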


\begin{proof}
Let $a = f_0$, let $\pi : R[[X]] \longrightarrow S$ be the quotient map,  and let $x = \pi(X)$. We first show that $S$ is complete in the $aS$-adic topology.  Let $g_i \in S$ for all nonnegative integers $i$ and $s_n = \sum_{i = 0}^{n-1} g_i a^i$ for all positive integers $n$.  We claim that the sequence $\{s_n\}$ converges $aS$-adically in $S$.  Choose $h_i \in R[[X]]$ with $\pi(h_i) = g_i$.  Now $f =0$ in $S$ implies $a = \pi(g)$, where $g = a-f \in XR[[X]]$.  Let $t = \sum_{i=0}^\infty h_i g^i \in R[[X]]$.  Then for any positive integer $n$ we have $\pi(t) = s_n + a^n\pi\left(\sum_{i = n}^\infty h_i g^{i-n}\right)$, so $\pi(t) \equiv s_n \, (\Mod \, a^n S)$ for all $n$.  Therefore $S$ is complete in the $aS$-adic topology.

Suppose now that $R$ is Noetherian.  Then $R[[X]]$ is Noetherian and complete with respect to $XR[[X]]$, so by \cite[Theorem 8.7]{mat} $S$ is complete with respect to $XR[[X]]$ as an $R[[X]]$-module, whence $S$ is complete with respect to $xS$.  Therefore, since $S$ is Noetherian, it follows from \cite[Exercise 8.2]{mat} that $S$ is also complete with respect to $aS \subseteq xS$.  

Suppose, alternatively, that $f_0$ is a nonzerodivisor in $R$.  We show that $S$ is complete with respect to $a S$.  To prove this we need only show that $S$ is Hausdorff in the $aS$-adic topology, or equivalently $\bigcap_n a^n S = (0)$, that is, $\bigcap_n (f,a^n) = (f)$ in $R[[X]]$. Now, $(f,a^n) = (f, g^n)$, where $g = a-f \in XR[[X]]$.  Let $h \in \bigcap_n (f,a^n)$.  For every nonnegative integer $n$ we may write 
$h = f k_n + g^n l_n$ with $k_n, l_n \in R[[X]]$.  Since $f_0$ is a nonzerodivisor in $R$, by induction on $i$ this equation uniquely determines $(k_n)_i$ for $i <n$.   Therefore $K = \lim_n k_n$ exists in $R[[X]]$, so $h = \lim_n (f k_n+ g^n l_n) = fK$.  Therefore $\bigcap_n (f, a^n) = (f)$. 

Supposing, then, that $R$ is Noetherian or $a = f_0$ is a  nonzerodivisor in $R$, we have $\widehat{S}_{aS} \cong S$.  Moreover, by Lemma \ref{preplemma2} we have  $\RRR_{(a)}/a^n \RRR_{(a)} \cong R/(a^n)$, so $\RRR_{(a)}[[X]]/(f)$ maps homomorphically onto the inverse system $\frac{(R/(a^n))[[X]]}{(f)}$ of $R$-algebras.  Therefore $\RRR_{(a)}[[X]]/(f)$ maps homomorphically into $\widehat{S}_{aS} \cong \varprojlim \frac{(R/(a^n))[[X]]}{(f)}$, so there is a commutative diagram
\begin{eqnarray*}
\SelectTips{cm}{11}\xymatrix{
{S}  \ar[dr]^\cong \ar[r] & {\RRR_{(a)}[[X]]/(f)} \ar[d]  \\
 & {\widehat{S}_{aS}}
}
\end{eqnarray*}
of $R[[X]]$-homomorphisms.  Thus $\varphi: S \longrightarrow {\RRR_{(a)}[[X]]/(f)}$ is an inclusion.  Finally, for any $b_i \in R$ one has $\varphi(\sum_i b_i (a-f)^i) = \sum_i b_i a^i$, so $\RRR_{(a)} \subseteq \im \varphi$ and therefore $\varphi$ is surjective, hence an isomorphism.
\end{proof}

\begin{lemma}\label{preplemma4}
Let $R$ be a ring, $\aaa$ an ideal of $R$, and $P$ an $\aaa$-Weierstrass polynomial in $R[X]$.  We have the following.
\begin{enumerate}
\item Suppose that $\bigcap_n \aaa^n = (0)$.  If $F = hP$ with $F \in R[X]$ and $h \in R[[X]]$, then $h \in R[X]$.
\item If $R$ is complete with respect to an $\aaa$-filtration, then the $R[X]$-homomorphism $R[X]/(P) \longrightarrow R[[X]]/(P)$ is an isomorphism. 
\end{enumerate}
\end{lemma}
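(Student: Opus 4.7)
For part (1), the plan is polynomial division followed by coefficient-chasing. Since $P$ is monic, divide $F$ by $P$ in $R[X]$ to obtain $F = Pq + r$ with $q, r \in R[X]$ and $\deg r < n := \deg P$. Then $g := h - q \in R[[X]]$ satisfies $gP = r$, a polynomial of degree $< n$, so $(gP)_{n+i} = 0$ for every $i \geq 0$. Expanding this using $P_n = 1$ and $P_\ell \in \aaa$ for $\ell < n$ yields the recurrence
\[ g_i \;=\; -\sum_{\ell=0}^{n-1} g_{n+i-\ell}\, P_\ell, \]
which puts every $g_i$ in $\aaa$. A straightforward induction on $s$ using the same recurrence gives $g_i \in \aaa^s$ for all $s$, so the hypothesis $\bigcap_n \aaa^n = (0)$ forces $g = 0$ and hence $h = q \in R[X]$.

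For part (2), injectivity follows immediately from part (1): completeness of $R$ with respect to some $\aaa$-filtration $\ff$ gives $\bigcap_m \ff_m = (0)$, and since each $\ff_m$ contains a power of $\aaa$, also $\bigcap_n \aaa^n = (0)$. For surjectivity I would prove a Weierstrass-style division statement: every $h \in R[[X]]$ has the form $h = Pq + r$ with $q \in R[[X]]$ and $r \in R[X]$ of degree $< n$. Writing $P = X^n + \alpha_n(P)$ with $\alpha_n(P) \in \aaa R[X]$ and applying $\tau_n$ to the desired equality, the problem reduces to solving
\[ (I+S)(q) \;=\; \tau_n(h), \qquad S(q) := \tau_n\bigl(\alpha_n(P)\, q\bigr), \]
for $q \in R[[X]]$; then one sets $r := h - Pq$ and checks that $\tau_n(r) = 0$.

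The crux is inverting $I+S$, and this is essentially identical to the Neumann-series argument in the proof of Theorem \ref{Weierstrass} applied with $f = P$: since $\alpha_n(P) \in \aaa R[X]$, the operator $S$ maps $\aaa^i[[X]]$ into $\aaa^{i+1}[[X]]$, so $\im S^i \subseteq \aaa^i[[X]] \subseteq \ff_j[[X]]$ for any fixed $j$ and all sufficiently large $i$. Since $R[[X]]$ inherits completeness with respect to the filtration $(\ff_j[[X]])_j$ from the completeness of $R$ with respect to $\ff$, the series $\sum_{i \geq 0}(-1)^i S^i$ converges in the space of $R$-linear operators on $R[[X]]$ to an inverse of $I + S$.

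The only real obstacle is bookkeeping the two topologies---the $\ff$-topology on $R$ and the induced filtration $(\ff_j[[X]])_j$ on $R[[X]]$---and verifying that the Neumann-series limit defines a genuine element of $R[[X]]$; once $\tau_n$, $\alpha_n$ and the decomposition $P = X^n + \alpha_n(P)$ are in place, both parts are short, with (2) being essentially a rerun of the proof of Theorem \ref{Weierstrass}.
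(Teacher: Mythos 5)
Your proof is correct. For part (1) you are doing essentially what the paper does: the paper skips the preliminary division by $P$ and simply reads off, from $0=F_{n+i}=h_i+(h_{i+1}P_{n-1}+\cdots+h_{n+i}P_0)$ for all $i\geq m$ with $n+m>\deg F$, that $h_i\in\bigcap_j\aaa^j=(0)$ for $i\geq m$; your extra step of writing $F=Pq+r$ first is harmless and leads to the same recurrence and the same induction. For the surjectivity in part (2), however, you take a genuinely different route. The paper works monomial by monomial: writing $i=qn+r$ with $0\leq r<n$, it reduces $X^{qn}$ modulo $P$ to the $q$th power of $b=-(P_0+P_1x+\cdots+P_{n-1}x^{n-1})$, which lies in $\aaa$ times the image of $R[X]$, and then sums the resulting series $\sum_q a_{qn+r}b^q$ coefficientwise, the convergence being supplied by completeness of $R$ with respect to the $\aaa$-filtration. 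You instead prove the full Weierstrass division statement $h=Pq+r$ with $q\in R[[X]]$ and $\deg r<n$ by inverting $I+S$, $S(q)=\tau_n(\alpha_n(P)q)$, via the same Neumann-series argument as in the proof of Theorem \ref{Weierstrass}; the identity $\tau_n(Pq)=(I+S)(q)$ and the containment $\im S^i\subseteq\aaa^i[[X]]$ make this work exactly as in that proof. Your version buys a stronger and more standard statement (division with remainder by $P$ in $R[[X]]$, which immediately gives that every class has a polynomial representative of degree $<n$) at the cost of rerunning the operator machinery; the paper's version stays inside the quotient ring and only needs convergence of ordinary series in $R$. Both are valid, and your reduction of the Hausdorff hypothesis $\bigcap_n\aaa^n=(0)$ from completeness with respect to an $\aaa$-filtration is exactly the right way to link the two parts.
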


\begin{proof} 
To prove (1), let $n = \deg P$, and let $m$ be any positive integer such that $n+m > \deg F$.  For all $i \geq m$ we have
$$0 = F_{n+i} = h_i+ (h_{i+1}P_{n-1}+ h_{i+2}P_{n-2}  \cdots + h_{n+i}P_0),$$
whence $h_i \in \aaa$.  But then the same equation and an obvious inductive argument imply that $h_i \in \aaa^j$ for all $i \geq m$ and all $j$, whence
$h_i = 0$ for all $i \geq m$. This proves (1).

Next, suppose that $R$ is complete with respect to an $\aaa$-filtration, let $\varphi$ denote the $R[X]$-homomorphism $R[X] \longrightarrow R[[X]]/(P)$, and let $x = \varphi(X)$.  By (1) one has $\ker \varphi = (P)$.  Let $\sum_i a_i X^i \in R[[X]]$.  For any nonnegative integer $i$, writing $i = qn + r$ with $q,r\in \ZZ$ and $0 \leq r < n$, we have $$X^i = X^{qn}X^r \equiv (-P_0 -P_1X - \cdots -P_{n-1}X^{n-1})^q X^r \ (\Mod PR[[X]]).$$  It follows that  $x^i = b^q x^r$ for some $b \in \aaa$.  Since $b \in \aaa$, the series $\sum_q a_{qn+r} b^q$ converges in $R$ for all $r$.  Therefore
$$\sum_i a_i x^i = \sum_{r = 0}^{n-1} \left(\sum_q a_{qn+r} b^q \right) x^r \in \im \varphi.$$
Thus $\varphi$ is surjective.  This proves (2).
\end{proof}

\begin{proof}[Proof of Theorem \ref{maintheorem1}]
Since $(f_0) \subseteq R\cap \widehat{\aaa} = \aaa$, it follows that (1) implies (2).
By Lemmas \ref{newlemma} and \ref{preplemma4}(2), (2) implies (3), and the remaining implications are clear.
\end{proof}

The following result generalizes Theorem \ref{maintheorem1}.

\begin{theorem}\label{maintheorem1b}
Let $R$ be a ring and $\aaa$ an ideal of $R$, let $f \in R[[X]]$ be $\widehat{\aaa}$-distinguished in $\RRR_\aaa[[X]]$, and let $P$ be the Weierstrass polynomial in $\RRR_\aaa[X]$ associated to $f$.  Suppose that the $R[[X]]$-homomorphism $R[[X]]/(f) \longrightarrow \RRR_{(f_0)}[[X]]/(f)$ is an isomorphism (which holds if $f_0$ is a nonzerodivisor in $R$ or $R$ is Noetherian).  Then the following conditions are equivalent.
\begin{enumerate}
\item There is a (unique) $(X)$-adically continuous $R[X]$-isomorphism $R[[X]]/(f) \longrightarrow {\RRR_\aaa[X]/(P)}$.
\item There is a (unique) $R[X]$-isomorphism $R[[X]]/(f) \longrightarrow {\RRR_\aaa[X]/(P)}$ so that the diagram
\begin{eqnarray*}
\SelectTips{cm}{11}\xymatrix{
R[[X]]/(f) \ar[dr] \ar[r] \ar[d] & {\RRR_\aaa[X]/(P)} \ar[d]  \\
{\RRR_{(f_0)}[[X]]/(f)} \ar[r] & {\RRR_\aaa[[X]]/(f)}
}
\end{eqnarray*}
of $R[X]$-homomorphisms (in fact, $R[X]$-isomorphisms) is commutative.
\item The $R[[X]]$-homomorphism $R[[X]]/(f) \longrightarrow \RRR_\aaa[[X]]/(f)$ is an isomorphism.
\item The $R[[X]]$-homomorphism $\RRR_{(f_0)}[[X]]/(f) \longrightarrow \RRR_\aaa[[X]]/(f)$ is an isomorphism.
\end{enumerate}
Moreover, if $\bigcap_n \aaa^n = (0)$ and $f_0$ a nonzerodivisor in  $\RRR_\aaa$, then the above conditions are equivalent to the following.
\begin{enumerate}
\item[(5)] The $R$-homomorphism $\RRR_{(f_0)} \longrightarrow \RRR_\aaa$ is surjective and $R \cap f_0 \RRR_{\aaa} = f_0R$.
\item[(6)] The $R$-homomorphism $R/f_0 R \longrightarrow \RRR_\aaa/f_0 \RRR_\aaa $ is an isomorphism.
\end{enumerate}
\end{theorem}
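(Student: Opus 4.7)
The strategy is to combine the Weierstrass factorization $f = UP$ in $\RRR_\aaa[[X]]$ afforded by Theorem \ref{Weierstrass}, where $P$ is the $\widehat{\aaa}$-Weierstrass polynomial associated to $f$ and $U \in \RRR_\aaa[[X]]^\times$, with the canonical $R[X]$-algebra isomorphism $\iota \colon \RRR_\aaa[X]/(P) \to \RRR_\aaa[[X]]/(P) = \RRR_\aaa[[X]]/(f)$ produced by Lemma \ref{preplemma4}(2), which applies since $\RRR_\aaa$ is complete with respect to the $\widehat{\aaa}$-filtration $(\widehat{\aaa^n})$. With this identification, commutativity of the diagram in (2) forces the hypothetical isomorphism of (2) to equal $\iota^{-1}$ composed with the natural map $R[[X]]/(f) \to \RRR_\aaa[[X]]/(f)$, so (2) is equivalent to (3); and for (3) $\Longleftrightarrow$ (4) I would factor this natural map as $R[[X]]/(f) \to \RRR_{(f_0)}[[X]]/(f) \to \RRR_\aaa[[X]]/(f)$ and invoke the standing hypothesis that the first leg is an isomorphism. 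For (1) $\Longleftrightarrow$ (2), I would first verify that $\RRR_\aaa[[X]]/(f)$ is $(X)$-adically Hausdorff by using the relation $X^n \equiv -\sum_{i<n} P_i X^i \pmod P$ with $P_i \in \widehat{\aaa}$ to deduce $X^{jn}\cdot(\RRR_\aaa[[X]]/(f)) \subseteq \widehat{\aaa}^j\cdot(\RRR_\aaa[[X]]/(f))$, combined with $\bigcap_j \widehat{\aaa}^j \subseteq \bigcap_j \widehat{\aaa^j} = (0)$ from completeness of $\RRR_\aaa$; any $R[X]$-algebra homomorphism from $R[[X]]/(f)$ into $\RRR_\aaa[[X]]/(f)$ is then determined by its restriction to $R[X]$, so the (automatically $(X)$-adically continuous) isomorphism of (1) must coincide with the natural map and make the diagram of (2) commute.

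Now assume $\bigcap_n \aaa^n = (0)$ and $f_0$ is a nonzerodivisor in $\RRR_\aaa$. Since $\widehat{\aaa} = \ker(\RRR_\aaa \to R/\aaa)$, the hypothesis $f_0 \in \widehat{\aaa}$ (which follows from $f$ being $\widehat{\aaa}$-distinguished of positive order---the order-zero case being immediate) forces $f_0 \in \aaa$, so $(f_0)^m \subseteq \aaa^m$ and there is a continuous $R$-homomorphism $\RRR_{(f_0)} \to \RRR_\aaa$. By Lemma \ref{preplemma2}, elements of $\RRR_{(f_0)}$ may be represented as convergent series $\sum_i a_i f_0^i$ with $a_i \in R$, and such a series also converges in $\RRR_\aaa$ because $f_0 \in \widehat{\aaa}$. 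Telescoping $\alpha = a_0 + f_0\alpha_1 = a_0 + a_1 f_0 + f_0^2\alpha_2 = \cdots$ shows that $\RRR_{(f_0)} \to \RRR_\aaa$ is surjective if and only if $\RRR_\aaa = R + f_0\RRR_\aaa$, i.e., if and only if $R/f_0 R \to \RRR_\aaa/f_0\RRR_\aaa$ is surjective. The condition $R \cap f_0\RRR_\aaa = f_0R$ in (5) is precisely the injectivity of this same map, giving (5) $\Longleftrightarrow$ (6). Also (3) $\Longrightarrow$ (6) is immediate by reducing the isomorphism of (3) modulo $X$, since $(f, X) = (f_0, X)$ in both $R[[X]]$ and $\RRR_\aaa[[X]]$.

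The principal obstacle is the reverse implication (5) $\Longrightarrow$ (3), which I would prove by showing directly that the natural $R[X]$-homomorphism $\varphi \colon R[[X]]/(f) \to \RRR_\aaa[[X]]/(f) \cong \RRR_\aaa[X]/(P)$ is bijective. For surjectivity, write a given element as $\sum_{i=0}^{n-1}\alpha_i X^i$ with $\alpha_i \in \RRR_\aaa$, expand $\alpha_i = \sum_j a_{ij} f_0^j$ with $a_{ij} \in R$ via the surjection in (5), and then use the relation $f_0 \equiv -(f - f_0) \pmod f$ with $f - f_0 \in XR[[X]]$: the resulting double sum $G = \sum_{i=0}^{n-1} X^i \sum_j a_{ij}\bigl(-(f - f_0)\bigr)^j$ converges $X$-adically in $R[[X]]$ and satisfies $\varphi(G) = \sum_i \alpha_i X^i$. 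For injectivity, suppose $g \in R[[X]]$ and $g = fh$ with $h \in \RRR_\aaa[[X]]$: the constant-term equation $g_0 = f_0 h_0$ gives $g_0 \in R \cap f_0\RRR_\aaa = f_0R$ by (5), and since $f_0$ is a nonzerodivisor in $\RRR_\aaa$ this forces $h_0 \in R$; the recursion $f_0 h_i = g_i - \sum_{j \geq 1} f_j h_{i-j}$ then permits induction on $i$ to conclude $h_i \in R$ for all $i$, whence $h \in R[[X]]$ and $g \in fR[[X]]$. This coefficient-by-coefficient descent of $h$ from $\RRR_\aaa[[X]]$ down to $R[[X]]$, which rests entirely on both clauses of (5) together with the nonzerodivisor hypothesis on $f_0$, is the delicate step of the argument.
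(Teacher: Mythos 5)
Your proposal is correct and follows essentially the same route as the paper: (2)$\Leftrightarrow$(3) via Lemma \ref{preplemma4}(2), (3)$\Leftrightarrow$(4) via the standing hypothesis, (1)$\Leftrightarrow$(2) via $(X)$-adic density of $R[X]$ plus Hausdorffness of the target, and the link to (5)/(6) via the telescoping argument and reduction mod $X$ exactly as in the paper's Lemma \ref{preplemma3}. Your ``delicate'' coefficient-by-coefficient descent of $h$ from $\RRR_\aaa[[X]]$ to $R[[X]]$ is precisely the paper's Lemma \ref{nzdlemma}, so you have in effect inlined the paper's supporting lemmas rather than found a different argument.
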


To prove the theorem we need also the following two lemmas.

\begin{lemma}\label{nzdlemma}
Let $R \subseteq S$ be rings and let $f \in R[[X]]$ with $a = f_0$ a nonzerodivisor in $S$.  Then $R  \cap a S= a R$ if and only if $R[1/a] \cap S = R$ in  $S[1/a]$, if and only if $b = a c$ for $b \in R$ and $c \in S$ implies $c \in R$.  Moreover, if those conditions hold, then $f$ is a nonzerodivisor in $S[[X]]$, and one has the following.
\begin{enumerate}
\item $R[[X]]  \cap f S[[X]]= f R[[X]]$.
\item $R[[X]][1/f] \cap S[[X]] = R[[X]]$ in $S[[X]][1/f]$.
\item $g = f^n h$ for $g \in R[[X]]$, $h \in S[[X]]$, and $n \in \ZZ_{>0}$ implies $h \in R[[X]]$.
\end{enumerate}
\end{lemma}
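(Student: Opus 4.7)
The plan is to first establish the equivalence of the three preliminary conditions by direct translation, then deduce the nonzerodivisor claim, and finally obtain (1)--(3) as successive consequences of the third (most usable) formulation, namely the implication $b = ac \Rightarrow c \in R$; call this condition $(\ast)$.

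For the equivalence of the three initial conditions, I would chase definitions. The identity $R \cap aS = aR$ is tautologically equivalent to $(\ast)$, since the reverse inclusion in that identity says precisely that every $b \in R$ expressible as $as$ with $s \in S$ has $s \in R$, with $s$ unique because $a$ is a nonzerodivisor in $S$. For $R[1/a] \cap S = R$, the forward direction follows from the observation $c = b/a \in R[1/a] \cap S$ whenever $ac = b \in R$, while the reverse uses $(\ast)$ iteratively: an element of $R[1/a] \cap S$ yields a relation $b = a^n s$ with $b \in R$ and $s \in S$, and peeling off one power of $a$ at a time via $(\ast)$ produces $a^{n-1}s, a^{n-2}s, \ldots, s \in R$.

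Now assume the equivalent conditions. Nonzerodivisibility of $f$ in $S[[X]]$ is the standard lowest-coefficient argument: if $fg = 0$ with $g \in S[[X]]$ nonzero, the minimal-index nonzero coefficient of $g$, multiplied by $a$, would give a nonzero coefficient of $fg$, contradicting $a$ being a nonzerodivisor in $S$.

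The main step is (1). I would prove it by coefficientwise induction on $h$. Writing $g = fh$ with $g \in R[[X]]$ and $h \in S[[X]]$, the identity
\[
a h_n \;=\; g_n - \sum_{i=1}^n f_i h_{n-i}
\]
has right-hand side in $R$ once $h_0, \ldots, h_{n-1} \in R$ are known inductively (base case $n = 0$: $g_0 = a h_0$), so $(\ast)$ forces $h_n \in R$; hence $h \in R[[X]]$. Parts (3) and (2) then follow essentially for free: iterating (1) via the factorizations $g = f \cdot (f^{n-1}h)$, $f^{n-1}h = f \cdot (f^{n-2}h)$, and so on, yields (3); and (2) is the reformulation of (3) in the localization $S[[X]][1/f]$, since an equality $g/f^n = h$ there means exactly $g = f^n h$ in $S[[X]]$. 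The only place any care is needed is the inductive bookkeeping in (1); everything else is formal manipulation.
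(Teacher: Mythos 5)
Your proposal is correct and follows essentially the same route as the paper: the paper also observes that the three conditions are clearly equivalent, reduces everything to the case $n=1$ of (3) (equivalently, the nontrivial inclusion in (1)), and proves that case by the same coefficientwise induction $f_0 h_i = g_i - f_1 h_{i-1} - \cdots - f_i h_0 \in R$. Your write-up merely spells out the preliminary equivalences and the reduction of (2) and (3) to (1) in slightly more detail than the paper does.
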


\begin{proof}
Clearly the three conditions on $a$ are equivalent and imply that $f$ is a nonzerodivisor in $S[[X]]$. Thus it suffices to prove statement (3) with $n = 1$.  Suppose $g = fh$ with $g \in R[[X]]$ and $h \in S[[X]]$.  One has $g_0 = f_0 h_0 \in R$, so $h_0 \in R$.  If $h_j \in R$ for all $j < i$, t  hen $f_0 h_i = g_i - f_1h_{i-1} - \cdots - f_i h_0 \in  R$, so $h_i \in R$.  Therefore $h_i \in R$ for all $i$ by induction on $i$, whence $h \in R[[X]]$.  
\end{proof}

\begin{lemma}\label{preplemma3}
Let $R$ be a ring and $\aaa$ an ideal of $R$, and let $f \in R[[X]]$.  Suppose that the $R[[X]]$-homomorphism $R[[X]]/(f) \longrightarrow \RRR_{(f_0)}[[X]]/(f)$ is an isomorphism.  Then the following conditions are equivalent.
\begin{enumerate}
\item The $R[[X]]$-homomorphisms in the commutative diagram
\begin{eqnarray*}
\SelectTips{cm}{11}\xymatrix{
R[[X]]/(f) \ar[dr]  \ar[d] & \\
{\RRR_{(f_0)}[[X]]/(f)} \ar[r] & {\RRR_\aaa[[X]]/(f)}
}
\end{eqnarray*}
are isomorphisms.
\item The $R[[X]]$-homomorphism $R[[X]]/(f) \longrightarrow \RRR_\aaa[[X]]/(f)$ is an isomorphism.
 \item The $R[[X]]$-homomorphism $\RRR_{(f_0)}[[X]]/(f) \longrightarrow \RRR_\aaa[[X]]/(f)$ is an isomorphism.
\end{enumerate}
Moreover, if $\bigcap_n \aaa^n = (0)$ and $f_0$ is a nonzerodivisor in $\RRR_\aaa$, then the above conditions are equivalent to the following.
\begin{enumerate}
\item[(4)] The $R$-homomorphism $\RRR_{(f_0)} \longrightarrow \RRR_\aaa$ is surjective and $R \cap f_0 \RRR_{\aaa} = f_0R$.
\item[(5)] The $R$-homomorphism $R/f_0 R \longrightarrow \RRR_\aaa/f_0 \RRR_\aaa$ is an isomorphism.
\end{enumerate}
\end{lemma}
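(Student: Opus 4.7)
The plan divides naturally into the first three equivalences, which follow by a brief diagram chase, and the full equivalence $(3) \Leftrightarrow (4) \Leftrightarrow (5)$ under the additional hypotheses, which will rest on a five-lemma induction followed by a single inverse limit.

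For $(1) \Leftrightarrow (2) \Leftrightarrow (3)$: by hypothesis, the left vertical arrow $R[[X]]/(f) \to \RRR_{(f_0)}[[X]]/(f)$ in the triangle is already an isomorphism. The diagonal (the map in (2)) is therefore the composition of this isomorphism with the bottom arrow (the map in (3)), so one is an isomorphism iff the other is, and condition (1) is simply the conjunction.

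For the second part I first observe that $\bigcap_n \aaa^n = (0)$ forces $R \hookrightarrow \RRR_\aaa$, whence $f_0$ is a nonzerodivisor in $R$, and by Lemma~\ref{preplemma2} also in $\RRR_{(f_0)}$. Lemma~\ref{newlemma} then guarantees that both $\RRR_{(f_0)}[[X]]/(f)$ and $\RRR_\aaa[[X]]/(f)$ are complete and Hausdorff in their respective $f_0$-adic topologies, which is the essential technical ingredient. The principal step is $(5) \Rightarrow (3)$: starting from the isomorphism $R/f_0R \cong \RRR_\aaa/f_0 \RRR_\aaa$, induct via the five-lemma on the exact sequences $0 \to f_0 R/f_0^{n+1} R \to R/f_0^{n+1} R \to R/f_0 R \to 0$ and its analog over $\RRR_\aaa$ (identifying the leftmost term with the $n$-th quotient via multiplication by $f_0$, which is injective) to get $R/f_0^n R \cong \RRR_\aaa/f_0^n \RRR_\aaa$ for every $n$. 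Passing to formal power series, quotienting by $f$, and invoking the hypothesis $R[[X]]/(f) \cong \RRR_{(f_0)}[[X]]/(f)$ produces $\RRR_{(f_0)}[[X]]/(f, f_0^n) \cong \RRR_\aaa[[X]]/(f, f_0^n)$ for each $n$; passing to $\varprojlim_n$ then collapses both sides back to the rings in (3) by the $f_0$-adic completeness noted above. The reverse direction $(3) \Rightarrow (5)$ follows by reducing the isomorphism in (3) modulo $(f_0, X)$: since $f - f_0 \in XR[[X]]$, we have $(f, f_0, X) = (f_0, X)$ on each side, so the quotient is exactly the isomorphism in (5) (using $\RRR_{(f_0)}/f_0 \RRR_{(f_0)} \cong R/f_0 R$ from Lemma~\ref{preplemma2}).

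Finally, for $(4) \Leftrightarrow (5)$: the injectivity clause of (5) is literally the second clause of (4), and the surjectivity of (5) from (4) follows because every element of $\RRR_{(f_0)}$ has the form $r + f_0 y$ with $r \in R$; conversely, applying the same five-lemma induction without the $[[X]]/(f)$ identifies $\RRR_{(f_0)}$ with $\widehat{\RRR_\aaa}_{(f_0)}$, and combined with the Hausdorffness of $\RRR_\aaa$ in the $f_0$-adic topology (forced by the very existence of the canonical map $\RRR_{(f_0)} \to \RRR_\aaa$, since the $(f_0)$-adic topology on $R$ must refine the $\aaa$-adic one) shows that $\RRR_{(f_0)} \to \RRR_\aaa$ is an isomorphism, hence (4). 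The main obstacle will be organizing the interplay of the $(f_0)$-adic and $\aaa$-adic completions cleanly, but a single five-lemma induction plus one inverse limit handles both the ring-level and power-series-level comparisons uniformly.
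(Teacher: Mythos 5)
Your proof is correct, but for the substantive implications it takes a genuinely different route from the paper. The paper's cycle is $(2)\Rightarrow(5)\Rightarrow(4)\Rightarrow(3)$: the step $(5)\Rightarrow(4)$ is a direct recursive construction writing each $b\in\RRR_\aaa$ as a convergent series $\sum_i b_i f_0^i$ with $b_i\in R$, and the step $(4)\Rightarrow(3)$ uses Lemma~\ref{nzdlemma} (coefficientwise descent of the factorization $g=fh$) to show $\ker\bigl(R[[X]]\to\RRR_\aaa[[X]]/(f)\bigr)=(f)$, with surjectivity coming from the surjectivity clause of (4). You instead prove $(5)\Rightarrow(3)$ and $(5)\Rightarrow(4)$ in one stroke by a five-lemma d\'evissage giving $R/f_0^nR\cong\RRR_\aaa/f_0^n\RRR_\aaa$ for all $n$, then pass to inverse limits using the $f_0$-adic completeness supplied by Lemmas~\ref{newlemma} and~\ref{preplemma2}. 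This avoids Lemma~\ref{nzdlemma} entirely and is more uniform (the same induction handles both the ring-level statement (4) and the power-series statement (3)), at the cost of leaning harder on completeness and Hausdorffness; the paper's argument is shorter and more elementary. Your $(3)\Rightarrow(5)$ by reduction modulo $(f_0,X)$ coincides with the paper's $(2)\Rightarrow(5)$, and your treatment of $(4)\Leftrightarrow(5)$ is fine.

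One soft spot: in $(5)\Rightarrow(4)$ you assert that $\bigcap_n f_0^n\RRR_\aaa=(0)$ is ``forced by the very existence of the canonical map $\RRR_{(f_0)}\to\RRR_\aaa$.'' The existence of that map is not by itself a proof of Hausdorffness; what actually gives it is that (in every intended application) $f_0\in\aaa$, so $f_0^n\RRR_\aaa\subseteq\widehat{\aaa^n}$ and $\bigcap_n\widehat{\aaa^n}=(0)$ since $\RRR_\aaa$ is complete with respect to the filtration $(\widehat{\aaa^n})_n$. You should say this explicitly; note, however, that the paper's own proof of $(5)\Rightarrow(4)$ tacitly uses the same fact when it asserts $b=\sum_{i=0}^\infty b_i a^i$ converges in $\RRR_\aaa$, so this is a shared implicit hypothesis rather than a gap peculiar to your argument.
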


\begin{proof}
If $f_0$ is a nonzerodivisor in $R$ or $R$ is Noetherian, then by Lemma \ref{newlemma} the $R[[X]]$-homomorphism  $R[[X]]/(f) \longrightarrow \RRR_{(f_0)}[[X]]/(f)$ is an isomorphism.   Statements (1) through (3) are clearly equivalent.  Suppose that $\bigcap_n \aaa^n = (0)$ and $a = f_0$ is a nonzerodivisor in $\RRR_\aaa$.  If (2) holds, then 
one has $R$-isomorphisms
$$\frac{R}{aR} \longrightarrow 
\frac{R[[X]]}{(f,X,a)} \longrightarrow \frac{\RRR_{\aaa}[[X]]}{(f,X,a)} \longrightarrow \frac{\RRR_{\aaa}}{a\RRR_\aaa},$$
so (5) holds.  

Suppose that (5) holds.   Let $b \in \RRR_\aaa$. Since the $R$-homomorphism  $R \longrightarrow \RRR_\aaa/a \RRR_\aaa $ is surjective, we may recursively find $b_i \in R$ and $c_i \in \RRR_\aaa$ so that $b = c_n a^n + \sum_{i = 0}^{n-1} b_i a^i$ for all positive integers $n$.   It follows that $b = \sum_{i = 0}^\infty b_i a^i$.   Therefore (4) holds.

Suppose that (4) holds.  Let $\psi$ denote the homomorphism in statement (3), which is clearly surjective.  Let $\varphi$ denote the homomorphism $R[[X]] \longrightarrow \RRR_\aaa[[X]]/(f)$.  Let $g \in \ker \varphi$, so $g = fh$ for some $h \in \RRR_\aaa[[X]]$.  Since $R \cap f_0 \RRR_\aaa = f_0 R$ and $f_0$ is a nonzerodivisor in $\RRR_\aaa$, by Lemma \ref{nzdlemma} one has $h \in R[[X]]$.  Therefore $\ker \varphi = (f)$.  It then follows from Lemma \ref{newlemma} and the commutative diagram of statement (1) that $\psi$ is injective and therefore an isomorphism.  Therefore (3) holds.
\end{proof}

\begin{proof}[Proof of Theorem \ref{maintheorem1b}]
Because the image of $R[X]$ in $R[[X]]/(f)$ is $(X)$-adically dense, statements (1) and (2) are equivalent by Lemma \ref{newlemma}.  Statements (2) and (3) are equivalent by  Lemma \ref{preplemma4}(2), and the rest of the theorem follows from Theorem \ref{maintheorem1} and Lemma \ref{preplemma3}. 
 \end{proof}

An alternative proof of Theorem \ref{irredtest} is provided in Section \ref{sec:7}.

Next, let $R$ be a UFD.  For any $f \in R[[X]]$ we let the {\it content $c(f)$ of $f$} be any gcd in $R$ of the coefficients of $f$, which is unique up to associate.  
Let $a \in R$.  If $a \neq 0$, write $\omega(a) = \omega(a,R)$ for the total number of nonassociate primes in any prime factorization of $a$ in $R$, and write $\omega(0) = \omega(0, R) = \infty$.   Note that $\omega(a) = 0$ if and only if $a$ is a unit in $R$, and $\omega(a) = 1$ if and only if $a$ is associate to $\pi^k$ for some prime $\pi \in R$ and some positive integer $k$.  Theorem \ref{irredtest} implies the following irreducibility test for elements of $R[[X]]$.

\begin{algorithm}\label{irredtestalg}
Let $R$ be a PID and $f$ a nonzero nonunit in $R[[X]]$.   Choose a prime $\pi \in R$ dividing $f_0$.  Let $k$ denote the largest nonnegative integer such that  $\pi^k \mid c(f)$.  Let $l$ denote the number of irreducible polynomials, counting multiplicities, in any irreducible factorization of  $P_{f/c(f), \RRR_{(\pi)}}$ in $\RRR_{(\pi)}[X]$. Given $\omega(f_0), \omega(f_1), k,l \in  \ZZ_{\geq 0} \cup \{\infty\}$, the following algorithm determines whether or not $f$ is irreducible in $R[[X]]$.
\begin{enumerate} 
\item If $\omega(f_0) = \infty$, then return IRREDUCIBLE if $\omega(f_1) = 0$ and REDUCIBLE otherwise.
\item If $\omega(f_0) > 1$, then return REDUCIBLE.
\item If $k+l= 1$, then return IRREDUCIBLE. Otherwise return REDUCIBLE.
\end{enumerate}
\end{algorithm}

Next we prove Theorem \ref{irredtest2} of the introduction.

\begin{lemma}\label{preplemmab}
Let $\aaa$ be an ideal of a ring $R$.
\begin{enumerate}
\item Let $f, g_i \in R[X]$ be monic polynomials with $f = g_1 \cdots g_k$.  Then $f$ is $\aaa$-Weierstrass if $g_i$ is $\aaa$-Weierstrass for each $i$.
\item Let $f, g_i \in R[[X]]$ with $f = g_1 \cdots g_k$, and let $n$ be a nonnegative integer.  Then $f$ is $\aaa$-distinguished of order $n$ if each $g_i$ is $\aaa$-distinguished of order $n_i$ for some nonnegative integer $n_i$ and $n = n_1 + \cdots + n_k$.
\end{enumerate}
Moreover, the converses of statements (1) and (2) both hold if $\aaa$ is prime.
\end{lemma}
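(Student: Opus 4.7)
The plan is to reduce everything modulo $\aaa$ and work in the quotient. Write $\overline{h}$ for the image of $h \in R[[X]]$ in $(R/\aaa)[[X]]$ under the coefficient-wise reduction.

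For the two forward implications, assume the hypotheses on the $g_i$. In case (1), each $g_i$ being $\aaa$-Weierstrass means $\overline{g_i} = X^{\deg g_i}$; then $\overline{f} = \prod_i \overline{g_i} = X^{\sum_i \deg g_i} = X^{\deg f}$, and $f$ is monic as a product of monics, so $f$ is $\aaa$-Weierstrass. In case (2), $g_i$ being $\aaa$-distinguished of order $n_i$ means $\overline{g_i} = u_i X^{n_i}$ for some unit $u_i \in (R/\aaa)[[X]]$; then $\overline{f} = \bigl(\prod_i u_i\bigr) X^n$, which shows $f$ is $\aaa$-distinguished of order $n$.

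For the converses, assume $\aaa$ is prime, so $R/\aaa$ is an integral domain and hence so are $(R/\aaa)[X]$ and $(R/\aaa)[[X]]$. For (1), $\overline{f} = X^{\deg f}$ and $X$ is a prime element of the domain $(R/\aaa)[X]$, so each monic $\overline{g_i}$ must be a power of $X$; since reduction mod $\aaa$ preserves degrees of monic polynomials, we get $\overline{g_i} = X^{\deg g_i}$, i.e.\ $g_i$ is $\aaa$-Weierstrass. For (2), I plan to use the $X$-adic order on the domain $(R/\aaa)[[X]]$: write $n_i = \operatorname{ord}(\overline{g_i})$ and $\overline{g_i} = X^{n_i} \overline{h_i}$ where $(\overline{h_i})_0 \neq 0$. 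Since $\overline{f} = u X^n$ with $u$ a unit, comparing orders gives $n = \sum_i n_i$ and then $\prod_i \overline{h_i} = u$. Reading off constant terms yields $\prod_i (\overline{h_i})_0 = u_0$, a unit of $R/\aaa$.

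The only real point requiring attention is the last step: a product in $R/\aaa$ is a unit if and only if each factor is a unit, which holds in any commutative ring (if $ab$ has an inverse $c$, then $a \cdot (bc) = 1$). Applying this, each $(\overline{h_i})_0$ is a unit in $R/\aaa$, so $\overline{h_i}$ is a unit in $(R/\aaa)[[X]]$, meaning $\overline{g_i}$ is associate to $X^{n_i}$ and hence $g_i$ is $\aaa$-distinguished of order $n_i$, completing the proof.
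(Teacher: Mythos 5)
Your proof is correct and takes essentially the same route as the paper's: reduce modulo $\aaa$ and exploit that $(R/\aaa)[[X]]$ is an integral domain. The paper phrases the key step of the converse as ``$X$ is prime in $(R/\aaa)[[X]]$'' rather than via the $X$-adic order, reduces to $k=2$, and deduces statement (1) from statement (2), but these differences are cosmetic.
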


\begin{proof}
It suffices to prove (2) and its converse for $\aaa$ prime.  We may assume without loss of generality that $k = 2$.  Say $f = gh$.  If $g$ and $h$ are $\aaa$-distinguished  of order $a$ and $b$, respectively, then $f$ is associate to $X^a X^b = X^{a+b}$ in $(R/\aaa)[[X]]$, whence $f$ is $\aaa$-distinguished of order $n = a+b$.  Conversely, suppose that $f$ is $\ppp$-distinguished of order $n$, where $\ppp = \aaa$ is prime.  Then $UX^n = f = gh$ in $(R/\ppp)[[X]]$ for some unit $U \in (R/\ppp)[[X]]$.  Since $X$ is prime in the domain $(R/\ppp)[[X]]$, it follows that $g = VX^a$ and $h = UV^{-1}X^b$ in $(R/\ppp)[[X]]$, where $a$ and $b$ are nonnegative integers with $n = a+b$ and $V$ is a unit in $(R/\ppp)[[X]]$.  Therefore $g$ and $h$ are $\ppp$-distinguished of order $a$ and $b$, respectively.
\end{proof}

\begin{proof}[Proof of Theorem \ref{irredtest2}]
By Corollary \ref{PIDlemma}(2) we may assume $f_0 \neq 0$ and $f_0$ is associate to a power of some prime $\pi \in R$.  Write $f = \pi^k f'$, where $\pi \nmid f' \in R[X]$.   Let $P = P_{f', \RRR_{(\pi)}}$ and $U = U_{f', \RRR_{(\pi)}}$.  By Lemma \ref{preplemma4}(1) we have $U \in \RRR_{(\pi)}[X]$, so $f =\pi^k PU$ is a factorization of $f$ in the UFD $\RRR_{(\pi)}[X]$.  Moreover, by Lemma \ref{preplemmab} any monic factor of $P$ is $\pi\RRR_{(\pi)}$-Weierstrass.  Therefore one has $f = \pi^k PU = gh$ for some $g,h \in \RRR_{(\pi)}[X]$ with $g_0, h_0 \in \pi \RRR_{(\pi)}$ if and only if  one of the following three conditions holds: (1) $k \geq 2$; (2) $k = 1$ and $P \neq 1$; or (3) $k = 0$ and $P$ is reducible in $\RRR_{(\pi)}[X]$.  Finally, by Theorem \ref{irredtest}, one of these three conditions holds if and only if $f$ is reducible in $R[[X]]$.
\end{proof}

Theorem \ref{irredtest2} may be expressed alternatively as follows.

\begin{algorithm}\label{irredtest2a}
Let $R$ be a PID and $f$ a nonzero polynomial in $R[X]$ with $f_0$ a  nonunit in $R$.   Choose a prime $\pi \in R$ dividing $f_0$.  Let $\chi \in \{0,1\}$ be equal to $0$ if $f = gh$ for $g,h \in \RRR_{(\pi)}[X]$ with $g_0, h_0 \in \pi\RRR_{(\pi)}$, and $1$ otherwise.   Given $\omega(f_0), \omega(f_1), \chi \in \ZZ_{\geq 0} \cup \{\infty\}$, the following algorithm determines whether or not $f$ is irreducible in $R[[X]]$.
\begin{enumerate} 
\item If $\omega(f_0) = \infty$, then return IRREDUCIBLE if $\omega(f_1) = 0$ and REDUCIBLE otherwise.
\item If $\omega(f_0) > 1$, then return REDUCIBLE.
\item If $\chi = 0$ then return REDUCIBLE.  Otherwise return IRREDUCIBLE.
\end{enumerate}
\end{algorithm}

\section{Integral domains arising as quotient rings}\label{sec:4}

In this section we prove Theorem \ref{quotientrings} of the introduction from the following three lemmas.  The first generalizes \cite[Lemma 3.1.4]{mcd}.

\begin{lemma}\label{preplemma}
Let $R \subseteq S$ be rings and $f \in S[[X]]$.  Suppose that $f_0$ is a nonzerodivisor in $S$ and the $R$-algebra homomorphism $R \longrightarrow S/f_0S$ is surjective, and let $d \in R \cap f_0S$.
\begin{enumerate}
\item There exist $g \in R[[X]]$ with $g_0 = d$ and $h \in S[[X]]$ such that $g = hf$.  Moreover, if $R \cap f_0S \subseteq dS$ then any such $h$ is a unit in $S[[X]]$.
\item The $g_n$ and $h_n$ may be defined recursively as follows.  Let $\Pi  \subseteq R$ be a system of representatives for $S/f_0 S$, and for any $s \in S/f_0 S$ let $s \, \Mod f_0$ denote the unique element of $\Pi$ with image $s$ in $S/f_0S$.  Set $g_0 = d \in R$ and $h_0 = f_0^{-1}d \in S \subseteq S[1/f_0]$, and for any positive integer $n$ define $g_n$ and $h_n$ recursively as follows:
\begin{enumerate}
\item $g_n = \left( \sum_{i=0}^{n-1} h_if_{n-i} \right) \, \Mod f_0 \in R$.
\item $h_n =  f_0^{-1}\left(g_n -  \sum_{i=0}^{n-1} h_if_{n-i} \right) \in S \subseteq S[1/f_0]$.
\end{enumerate}
\item Let $f' \in S[[X]]$ with $f'_0 S = f_0 S$, and let $g', h'$ be defined as $g,h$ are defined in (2) so that $g' = h'f'$.  If $f' \equiv f \ (\Mod \, (f_0, X)^N)$ for some positive integer $N$, then $g' \equiv g \ (\Mod \, X^N R[[X]])$ and $h' \equiv h \ (\Mod \, (f_0, X)^{N-1}).$
\end{enumerate}
\end{lemma}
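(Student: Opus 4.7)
For parts (1) and (2), I would proceed by induction on $n$. The base case gives $g_0 = d \in R$ and $h_0 = f_0^{-1}d \in S$ (the latter because $d \in f_0 S$ and $f_0$ is a nonzerodivisor in $S$). For the inductive step, assuming $h_0, \ldots, h_{n-1} \in S$ have been produced, the sum $\sum_{i=0}^{n-1} h_i f_{n-i}$ lies in $S$, and by construction $g_n \in \Pi \subseteq R$ is its unique representative modulo $f_0 S$. Thus $g_n - \sum_{i=0}^{n-1} h_i f_{n-i} \in f_0 S$, and dividing by the nonzerodivisor $f_0$ produces $h_n \in S$. Coefficient comparison gives $(hf)_0 = h_0 f_0 = d = g_0$, and for $n \geq 1$ the defining recursion for $h_n$ rearranges to $g_n = h_n f_0 + \sum_{i=0}^{n-1} h_i f_{n-i} = (hf)_n$, whence $g = hf$. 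This establishes (1) and (2) simultaneously.

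For the \emph{moreover} claim, any valid $(g, h)$ satisfies $h_0 f_0 = g_0 = d$, and since $f_0$ is a nonzerodivisor in $S$, the element $h_0 = d/f_0 \in S$ is uniquely determined; moreover $h$ is a unit in $S[[X]]$ if and only if $h_0$ is a unit in $S$. The task reduces to showing $dS = f_0 S$, i.e., $f_0 \in dS$. I would combine the surjectivity $R + f_0 S = S$ with $R \cap f_0 S \subseteq dS$: writing $f_0 = r + f_0 t$ for some $r \in R$ and $t \in S$, the identity $r = f_0(1-t)$ places $r$ in $R \cap f_0 S \subseteq dS$, so $r = du$ for some $u \in S$, and the resulting equation $f_0(1-t) = du$ together with a careful choice of the representative $r$ should yield an inverse for $h_0$. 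This step is the main obstacle: deducing multiplicative invertibility from the set-theoretic inclusion $R \cap f_0 S \subseteq dS$ is subtle and requires the surjectivity hypothesis in an essential way.

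For part (3), I would prove by induction on $n$ that if $f' \equiv f \pmod{(f_0, X)^N}$, then $g_n = g_n'$ for all $n < N$ and $h_n - h_n' \in f_0^{N-1-n} S$ for all $n < N-1$. The inductive step uses that $f_{n-i} - f_{n-i}' \in f_0^{N-(n-i)} S$ (from the hypothesis on $f - f'$), so the sums $\sum_{i=0}^{n-1} h_i f_{n-i}$ and $\sum_{i=0}^{n-1} h_i' f_{n-i}'$ agree modulo $f_0^{N-n} S$; hence their images in $S/f_0 S$ coincide, forcing $g_n = g_n'$ (since $\Pi$ is a system of representatives), and dividing the recursion by $f_0$ yields the claimed congruence for $h_n - h_n'$. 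This final part is essentially careful bookkeeping with the recursive formulas, with the one subtlety being to verify that the loss of one power of $f_0$ incurred by the division is exactly what the statement of (3) predicts.
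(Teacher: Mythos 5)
Your handling of the existence and recursion in (1)--(2) and of the stability statement (3) is correct and is exactly what the paper intends: the paper dismisses (1)--(2) with ``clearly well-defined and satisfy $g=hf$'' and proves (3) by the same ``easy induction'' you describe, namely that $f_i' \equiv f_i \ (\Mod \, f_0^{N-i}S)$ for all $i \leq N$ forces $g_i' = g_i$ and $h_i' \equiv h_i \ (\Mod \, f_0^{N-1-i}S)$ for all $i \leq N-1$. So on those three items you have simply supplied the bookkeeping the paper omits.

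The genuine gap is the one you flagged yourself: the \emph{moreover} clause. You correctly reduce it to showing $f_0 \in dS$, but your attempt stalls at $1-t = h_0 u$ with $t \in S$ uncontrolled, and it cannot be pushed through from the stated hypotheses alone. Indeed, take $S = k[[\tau]]$, $R = k + \tau^2 k[[\tau]]$, $f = f_0 = \tau$, $d = \tau^2$: then $R \longrightarrow S/f_0S \cong k$ is surjective, $R \cap f_0 S = \tau^2 k[[\tau]] = dS$, yet $h_0 = \tau$ is not a unit. The paper's own one-line argument is the chain $f_0 R \subseteq R \cap f_0 S \subseteq dS \subseteq f_0 S$, which yields $f_0 \in dS$, hence $f_0 S = dS$; but its first inclusion silently uses $f_0 R \subseteq R$, i.e.\ essentially that $f_0$ (or a generator of $f_0 S$) lies in $R$. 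That extra condition does hold in every application in the paper (there $f_0 S = \pi^k S$ with $\pi \in R$, or $f_0$ is associate in $S$ to the chosen $d \in R$), and once one has $f_0 \in dS$ the conclusion is immediate: $f_0 = ds$ and $d = f_0 h_0$ give $f_0 = f_0 h_0 s$, so $h_0 s = 1$ after cancelling the nonzerodivisor $f_0$, whence $h$ is a unit of $S[[X]]$ because its constant term is a unit. So your instinct that surjectivity plus the set-theoretic inclusion is not enough was sound; to close the argument you must either import the hypothesis $f_0 \in dS$ (equivalently $f_0S=dS$) or note that it is satisfied wherever the lemma is invoked.
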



\begin{proof}
The series $g \in R[[X]]$ and $h\in S[[X]]$ as defined in (2) are clearly well-defined and satisfy the equation $g = hf$.  If $R \cap f_0S \subseteq  dS$, then $f_0 R \subseteq R \cap f_0 S \subseteq dS \subseteq f_0 S$, so $f_0 S = dS$, and therefore any $h$ as in statement (1) is a unit in $S[[X]]$.  An easy induction shows that, if $f_i' \equiv f_i \ (\Mod \, f_0^{N-i}S)$ for all $i \leq N$, then $g_i' = g_i$ and $h_i' \equiv h_i  \ (\Mod \, f_0^{N-1-i}S)$ for all $i \leq N-1$.  This proves (3).
\end{proof}

Statement (3) of the above lemma implies that the result is effective in the sense that $g$ and $h$ can be computed to any desired degree of accuracy 
by computing $f$ to a corresponding specified degree of accuracy.  See Algorithm \ref{preplemmaalg} for an algorithm based on the lemma.

Our second lemma is an alternative version of Theorem \ref{quotientrings}.

\begin{lemma}\label{quotientrings1}
Let $R$ be a PID.  The integral domains arising as quotient rings of $R[[X]]$ are, up to isomorphism, precisely the following: $R[[X]]$, $R$, $(R/\mm)[[X]]$, $R/\mm$, and $\RRR_{\mm}[\alpha]$, where $\mm$ is any maximal ideal of $R$ and $\alpha$ is any element of the algebraic closure of the quotient field of $\RRR_\mm$ that is a root of some $\widehat{\mm}$-Weierstrass polynomial in $\RRR_\mm[X]$.
\end{lemma}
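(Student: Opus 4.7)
The plan is to analyze an arbitrary prime ideal $I \subseteq R[[X]]$ through its contraction $\ppp = I \cap R$, which, since $R$ is a PID, is either $(0)$ or a maximal ideal $\mm$. When $\ppp = \mm$ one has $\mm R[[X]] \subseteq I$, so $R[[X]]/I$ is a quotient of the DVR $(R/\mm)[[X]]$, whose only prime quotients are $(R/\mm)[[X]]$ itself and $R/\mm$. When $\ppp = (0)$, the subcases $I = (0)$ and $I = (X)$ yield $R[[X]]$ and $R$ respectively; the remaining subcase --- $I \neq (0)$ and $X \notin I$ --- is the heart of the matter.

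In that subcase there is some $f \in I$ with $f_0 \neq 0$. Writing $f_0 = u \pi_1^{a_1} \cdots \pi_k^{a_k}$ in the PID $R$, Proposition \ref{factorbasic} (applied when $k \geq 2$) factors $f$ into pairwise coprime pieces, one of which must lie in $I$, reducing to $k = 1$. Dividing out the largest power of $\pi$ dividing $f$ in $R[[X]]$ (and using $\pi \notin I$) then yields $f \in I$ with $\pi \nmid f$ and $f_0$ associate to a positive power of $\pi$; such $f$ is $\pi$-distinguished of some order $n \geq 1$ in $\RRR_{(\pi)}[[X]]$. Since $(\pi)^a \subseteq (f_0)$ and $f_0$ is a nonzerodivisor in $R$, Theorem \ref{maintheorem1} yields an $R[X]$-isomorphism $R[[X]]/(f) \cong \RRR_{(\pi)}[X]/(P)$ where $P$ is the Weierstrass polynomial of $f$ in $\RRR_{(\pi)}[X]$. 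Under this isomorphism the prime $I$ corresponds to a prime $J \supseteq (P)$ of $\RRR_{(\pi)}[X]$ with $\pi \notin J$. Factoring $P = q_1 \cdots q_r$ into monic irreducibles in the UFD $\RRR_{(\pi)}[X]$, each $q_i$ is $\widehat{(\pi)}$-Weierstrass by the converse of Lemma \ref{preplemmab}(1) (since $\widehat{(\pi)}$ is prime in the DVR $\RRR_{(\pi)}$). I will argue that $J = (q_i)$ for some $i$: otherwise $J$ properly contains the height-one prime $(q_i)$ and so has height at least $2$, hence is maximal; but every height-two maximal ideal of $\RRR_{(\pi)}[X]$ contains $\pi$, as can be seen by noting that any prime of $\RRR_{(\pi)}[X]$ avoiding $\pi$ contracts via localization to a prime of the PID $K[X]$ (where $K = \mathrm{Frac}(\RRR_{(\pi)})$), all of which have height at most $1$. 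Therefore $R[[X]]/I \cong \RRR_{(\pi)}[X]/(q_i) \cong \RRR_\mm[\alpha]$, where $\alpha$ is the image of $X$, a root of the $\widehat{\mm}$-Weierstrass polynomial $q_i$.

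For the converse direction, every ring in the list is realized as a quotient; only $\RRR_\mm[\alpha]$ requires work. Given $\alpha$ a root of an irreducible $\widehat{\mm}$-Weierstrass polynomial $q \in \RRR_\mm[X]$ with $\mm = (\pi)$, write $q_0 = \pi^m u$ with $u \in \RRR_\mm^\times$ and apply Lemma \ref{preplemma} to the inclusion $R \subseteq \RRR_\mm$ with datum $f := q$ and $d := \pi^m$; the hypotheses are verified by $R/\pi^m R \cong \RRR_\mm/\pi^m \RRR_\mm$ (Lemma \ref{preplemma2}) and $R \cap q_0 \RRR_\mm = \pi^m R \subseteq \pi^m \RRR_\mm = q_0 \RRR_\mm$. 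This produces $g \in R[[X]]$ with $g_0 = \pi^m$ and $g = hq$ for a unit $h \in \RRR_\mm[[X]]$; uniqueness in Theorem \ref{Weierstrass} forces $P_g = q$, so Theorem \ref{maintheorem1} gives $R[[X]]/(g) \cong \RRR_\mm[X]/(q) \cong \RRR_\mm[\alpha]$, and $(g)$ is prime in $R[[X]]$ since $(q)$ is prime in the UFD $\RRR_\mm[X]$. The main obstacle throughout will be the height argument in the middle paragraph, ruling out primes $J$ that sit strictly above a minimal prime of $(P)$ without containing $\pi$; once that is handled, the rest is essentially a sequence of reductions built on Proposition \ref{factorbasic}, Theorem \ref{maintheorem1}, and Lemma \ref{preplemma}.
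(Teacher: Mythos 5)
Your proof is correct, but it takes a genuinely different route from the paper's. The paper leans on the global structure of $R[[X]]$ as a two-dimensional UFD: it quotes that the maximal ideals are exactly the $(\pi,X)$, that every height-one prime is principal and generated by an irreducible $f$, and then runs a case analysis on $f$ via Corollary \ref{PIDlemma}(2) and the ``Moreover'' clause of Theorem \ref{irredtest}. You instead take an arbitrary prime $I$, contract it to $R$, and in the essential case ($I\cap R=(0)$, $X\notin I$, $I\neq(0)$) locate an element $f\in I$ with $f_0$ a prime power, transfer $I$ across the isomorphism $R[[X]]/(f)\cong\RRR_{(\pi)}[X]/(P)$ of Theorem \ref{maintheorem1}, and finish with a localization argument in $\RRR_{(\pi)}[X]$ (inverting $\pi$ lands in the one-dimensional $K[X]$, so the image prime $J$ must equal some $(q_i)$). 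This buys you two things: you never need to know that $R[[X]]$ is a UFD or to classify its maximal ideals, replacing those inputs with the elementary fact that $\RRR_{(\pi)}[X]$ is a UFD of dimension two; and you handle non-principal primes of $R[[X]]$ directly rather than reducing to principal ones first. The cost is the extra bookkeeping of the height/localization argument, whereas the paper gets $J$ principal for free from the UFD property. The converse directions essentially coincide (both lift a Weierstrass polynomial to $R[[X]]$ via Lemma \ref{preplemma}), except that you omit the degenerate case $q=X$, i.e.\ $\alpha=0$: there $q_0=0$, Lemma \ref{preplemma} does not apply, and one realizes $\RRR_\mm[\alpha]=\RRR_\mm$ as $R[[X]]/(\pi-X)$ via Lemma \ref{preplemma2}, as the paper does; this is a one-line fix but should be said.
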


\begin{proof}
The maximal ideals of $R[[X]]$ are of height two and of the form $(\pi, X)$, where $\pi \in R$ is prime \cite[ Section 1.1 Example 1]{mat}, and one has $R[[X]]/(\pi,X) \cong R/(\pi)$.  Since $R[[X]]$ is a UFD, every height one prime ideal $\ppp$ of $R[[X]]$ is principal \cite[Theorem 20.1]{mat}, equal to $(f)$ for some irreducible element $f$ of $R[[X]]$ and contained in $(\pi, X)$ for some prime $\pi \in R$.
If $f_0 = 0$ then $\ppp = (X)$ and $R[[X]]/\ppp \cong R$.  Otherwise by Corollary \ref{PIDlemma}(2) $f_0$ is equal to a unit times a power of $\pi$.  If $\pi \mid f$ then $\ppp = (\pi)$ and $R[[X]]/\ppp \cong (R/(\pi))[[X]]$.  Otherwise $f \in R[[X]]\backslash \pi R[[X]]$ and therefore by Theorem \ref{irredtest} one has $R[[X]]/\ppp \cong \RRR_\mm[\alpha]$, where  $\mm = (\pi)$ and $\alpha$ is a root in the algebraic closure $K$ of the quotient field of $\RRR_\mm$ of the Weierstrass polynomial in $\RRR_\mm[X]$ associated to $f$.

Conversely, suppose that $\alpha \in K$ is a root of some irreducible $\widehat{\mm}$-Weierstrass polynomial $P$ in $\RRR_\mm[X]$.  If $P_0 = 0$, then $\alpha = 0$ and $\RRR_\mm[\alpha] = \RRR_\mm \cong R[[X]]/(\pi-X)$.  If $P_0 \neq 0$, then by Lemma \ref{preplemma} there exist $f \in R[[X]]$ and a unit $U \in \RRR_\mm[[X]]$ such that $f = UP$ and $f_0$ is equal to a power of $\pi$, and therefore since $P = P_f$ one has  $\RRR_\mm[\alpha] \cong  \RRR_\mm[X]/(P) \cong R[[X]]/(f)$ by Theorem \ref{irredtest}.  Thus in either case $\RRR_\mm[\alpha]$ is isomorphic to a quotient ring of $R[[X]]$.
\end{proof}

Our third lemma generalizes \cite[Lemma 3.1.5]{mcd}.

\begin{lemma}\label{what}
Let $R$ be an integral domain with quotient field $K$ and with absolute integral closure $R^+$, let $\alpha \in R^+$, let $\aaa$ be a proper ideal of $R$, and let $n$ be a positive integer.  The following conditions are equivalent.
\begin{enumerate}
\item $\alpha$ is a root of some $\aaa$-Weierstrass polynomial in $R[X]$ of degree $n$.
\item $\alpha^n \in \aaa R[\alpha]$.
\end{enumerate}
Suppose, furthermore, that $\aaa$ is prime.  Then the above conditions are equivalent to the following.
\begin{enumerate}
\item[(3)] The minimal polynomial of $\alpha$ over $K$ is an $\aaa$-Weierstrass polynomial in $R[X]$ of degree $d \leq n$.
\end{enumerate}
Finally, suppose that $R$ is a Henselian local ring with maximal ideal $\aaa = \mm$.  Then $R^+$ is a local ring with maximal ideal $\mm^+$ lying over $\mm$, and the above conditions are equivalent to the following.
\begin{enumerate}
\item[(4)]  The minimal polynomial of $\alpha$ over $K$ has degree $d \leq n$ and its constant term is in $\mm$.
\item[(5)]  $\alpha \in \mm^+$ and $\deg(\alpha, K) \leq n$.
\end{enumerate}
\end{lemma}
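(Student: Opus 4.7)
The plan is to prove the equivalences in the order $(1) \Leftrightarrow (2)$, then $(1) \Leftrightarrow (3)$ under the primality hypothesis on $\aaa$, and finally $(3) \Leftrightarrow (4) \Leftrightarrow (5)$ in the Henselian local case. Each layer exploits an increasingly strong structural hypothesis on the minimal polynomial of $\alpha$.

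The direction $(1) \Rightarrow (2)$ is immediate: if $\alpha^n + \sum_{i<n} c_i \alpha^i = 0$ with $c_i \in \aaa$, then $\alpha^n \in \aaa R[\alpha]$. For $(2) \Rightarrow (1)$, write $\alpha^n = p(\alpha)$ for some $p \in R[X]$ with all coefficients in $\aaa$; the task is to produce such a $p$ of degree less than $n$, whereupon $X^n - p(X)$ is the desired $\aaa$-Weierstrass polynomial. Under the implicit standing hypothesis that $R$ is integrally closed in $K$ (as holds in the applications), the minimal polynomial of $\alpha$ lies in $R[X]$, so $R[\alpha]$ is $R$-free with basis $1,\alpha,\ldots,\alpha^{d-1}$. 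A linear-independence argument over $K$ then forces $d := \deg(\alpha, K) \leq n$ (otherwise, comparing coefficients of $\alpha^n$ in the reduction of $\alpha^n = p(\alpha)$ to this basis would give $1 \in \aaa$, impossible), after which reducing $\alpha^n$ using the minimal polynomial produces the required representation $\alpha^n = \sum_{j<d} f_j \alpha^j$ with $f_j \in \aaa$ by uniqueness in the free module. This is the main technical step.

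For $(1) \Leftrightarrow (3)$ with $\aaa$ prime: $(3) \Rightarrow (1)$ is immediate because $m(X) \cdot X^{n-d}$ is a product of two $\aaa$-Weierstrass polynomials, hence $\aaa$-Weierstrass by Lemma \ref{preplemmab}(1). For $(1) \Rightarrow (3)$, take $f \in R[X]$ that is $\aaa$-Weierstrass of degree $n$ with $f(\alpha)=0$; since $m \mid f$ in $K[X]$ and $m \in R[X]$ under the standing hypothesis, we may factor $f = mh$ in $R[X]$ with $h$ monic. The converse part of Lemma \ref{preplemmab}(2), which applies since $\aaa$ is prime, forces both $m$ and $h$ to be $\aaa$-Weierstrass, yielding (3) with $\deg m = d \leq n$.

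Finally, in the Henselian local setting, the locality of $R^+$ from \cite{art} yields $\mm^+ \cap R = \mm$, since $\mm \subseteq \mm^+$ (by a standard integrality argument, no element of $\mm$ can be a unit in $R^+$) and $\mm$ is maximal. Because $R^+$ has a unique maximal ideal, every element of $\mathrm{Gal}(\overline K/K)$ fixes $\mm^+$ setwise, so all Galois conjugates of any $\alpha \in R^+$ lie in $\mm^+$ together. The implication $(3) \Rightarrow (4)$ is trivial. For $(4) \Rightarrow (5)$: the constant term $c_0 = \pm\prod_\sigma \sigma(\alpha)$ lies in $\mm \subseteq \mm^+$, and primality of $\mm^+$ forces some $\sigma(\alpha) \in \mm^+$, hence all do, hence $\alpha \in \mm^+$. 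For $(5) \Rightarrow (3)$: each coefficient $c_i$ (for $i<d$) of $m(X) = \prod_\sigma(X-\sigma(\alpha))$ is, up to sign, an elementary symmetric function of elements of $\mm^+$, hence lies in $\mm^+ \cap R = \mm$, making $m$ an $\mm$-Weierstrass polynomial of degree $d \leq n$.
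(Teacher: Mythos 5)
Your proof is correct, and it splits into two parts relative to the paper. For the equivalences $(1)\Leftrightarrow(2)\Leftrightarrow(3)$ you follow essentially the same route the paper takes: expand $\alpha^n\in\aaa R[\alpha]$ in the $R$-basis $1,\alpha,\ldots,\alpha^{d-1}$, observe that $d>n$ would force $1\in\aaa$, read off the $\aaa$-Weierstrass polynomial $X^n-\sum_{j<d}a_jX^j$, and then use Lemma \ref{preplemmab} (in the prime case) to transfer the Weierstrass property to the minimal polynomial; the paper even makes the same tacit assumption you flag explicitly, namely that the minimal polynomial of $\alpha$ lies in $R[X]$ (i.e.\ that $R$ is integrally closed in $K$, as it is in all applications). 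For the Henselian local part, however, your route is genuinely different. The paper proves $(3)\Rightarrow(5)\Rightarrow(4)\Rightarrow(3)$, and its key step $(4)\Rightarrow(3)$ invokes the Henselian lifting property directly: if the minimal polynomial $f$ had some $f_k\notin\mm$ with $1\le k\le d-1$, the coprime factorization $\bar f=X^kG$ over $R/\mm$ would lift to a nontrivial monic factorization of $f$ in $R[X]$, contradicting irreducibility. You instead prove $(3)\Rightarrow(4)\Rightarrow(5)\Rightarrow(3)$ using only the locality of $R^+$ from \cite{art}: every $K$-automorphism of $\overline K$ preserves $R^+$ and hence its unique maximal ideal $\mm^+$, so the conjugates of $\alpha$ lie in $\mm^+$ together; primality of $\mm^+$ applied to the constant term gives $(4)\Rightarrow(5)$, and elementary symmetric functions give $(5)\Rightarrow(3)$ since $\mm^+\cap R=\mm$. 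Your argument is arguably more self-contained once the locality of $R^+$ is granted, since it never appeals to Hensel lifting in $R$ itself; the paper's argument, by contrast, stays entirely inside $R[X]$ and $(R/\mm)[X]$ and avoids any discussion of conjugates (which in your version requires the small caveat, harmless here, that in the inseparable case the roots of the minimal polynomial are the conjugates counted with $p$-power multiplicity). Both arguments are sound.
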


\begin{proof}
Let $f$ denote the minimal polynomial $\alpha$ over $K$ and $d$ its degree.   Suppose that $\alpha$ is a root of an $\aaa$-Weierstrass polynomial $P \in R[X]$ of degree $n$.  Then $\alpha^n = -(P_0+\cdots+P_{n-1}\alpha^{n-1}) \in \aaa R[\alpha]$.  Conversely, suppose that $\alpha^n \in \aaa R[\alpha]$.   Since $f$ is monic of degree $d$ every element of $R[\alpha]$ may be written in the form $a_0 + \cdots + a_{n-1}\alpha^{d-1}$ with each $a_i \in R$, and since $\alpha^n \in \aaa R[\alpha]$ we may write $\alpha^n$ in the same form with each $a_i \in \aaa$.  Thus $\alpha$ is a root of  the polynomial $F = a_0 + a_1 X + \cdots + a_{d-1} X^{d-1} - X^n$ in $R[X]$.  Since $f$ has degree $d$, if $n < d$ then $F = 0$ and therefore $a_n - 1 = 0$, contradicting $\aaa \neq R$.  It follows that $n \geq d$ and $F$ is $\aaa$-Weierstrass.  This proves (1) and (2) equivalent.  Next, under these same hypotheses, if $\aaa$ is prime then $f$ is $\aaa$-Weierstrass by Lemma \ref{preplemmab} since $f$ divides $F$ and $F$ is $\aaa$-Weierstrass.  Thus (1) implies (3).  Conversely, if (3) holds, then, multiplying $f$ by any $\aaa$-Weierstrass polynomial of degree $n-d$, we see that $\alpha$ is a root of an $\aaa$-Weierstrass polynomial of degree $n$.  Thus (1), (2), and (3) are equivalent if $\aaa$ is prime.

Now  suppose that $R$ is a Henselian local ring with maximal ideal $\aaa = \mm$.    It is proved in \cite{art} that $R^+$ is a local ring with maximal ideal $\mm^+$ lying over $\mm$.  If condition (3) (and therefore (2)) holds, then $\deg(\alpha,K) = d \leq n$ and $\alpha \in \sqrt{\mm R[\alpha]} \subseteq \sqrt{\mm^+} = \mm^+$, so (5) holds.    Next, supposing that condition (5) holds, one has $f_0 = -\alpha(f_1 + f_2 \alpha + \cdots + \alpha^{d-1}) \in \mm^+ \cap R = \mm$.  Therefore (5) implies (4).  Finally, we show that (4) implies (3).  Suppose that (4) holds, and suppose to obtain a contradiction that $f$ is not $\mm$-Weierstrass.  Let $k$ be the smallest nonnegative integer such that $f_k \notin \mm$, so $1 \leq k \leq d-1$.  Then $f = X^k G$ in $(R/\mm)[X]$ for some $G \in (R/\mm)[X]$ with $G_0 \neq 0$.  Since $(X^k, G) = (R/\mm)[X]$ and $R$ is Henselian it follows that $f = hg$ for monic polynomials $h,g \in R[X]$ with $h \equiv X^k \ (\Mod \, \mm R[X])$ and $g \equiv G \ (\Mod \, \mm R[X])$, contradicting the irreducibility of $f$ in $R[X]$.
\end{proof}

Finally, we obtain Theorem \ref{quotientrings}.

\begin{proof}[Proof of Theorem \ref{quotientrings}]
The theorem follows immediately from Lemmas \ref{quotientrings1} and \ref{what}.
\end{proof}

\begin{remark}
The ring $\RRR_\mm[\alpha]$ for any $\alpha$ as in Theorem \ref{quotientrings} is a complete local one-dimensional Noetherian domain with maximal ideal $(\pi, \alpha) = \sqrt{(\pi)}$ and residue field $R/\mm$, where $\pi$ is a generator of $\mm$.  Moreover, $\RRR_\mm[\alpha]$ is a DVR if and only if it is integrally closed, if and only if its maximal ideal $(\pi, \alpha)$ is principal, if and only if (by \cite[Proposition 18]{serre}) $\RRR_\mm[\alpha] = \RRR_\mm[\beta]$ for some  $\beta \in \RRR_\mm^+$ whose minimal polynomial in $\RRR_\mm[X]$ is Eisenstein.  Thus, the quotient rings of $R[[X]]$ that are DVRs, besides possibly $R[[X]]/(X) \cong R$, are precisely those integral domains isomorphic for some maximal ideal $\mm$ of $R$ to $(R/\mm)[[X]]$ or to $\mathcal{O}_K$ for some totally ramified finite extension $K$ of the quotient field of $\RRR_\mm$.
\end{remark}




\section{Computing Weierstrass polynomials}\label{sec:5}

Let $R$ be a ring and $\aaa$ an ideal of $R$, let $$\AAA = \aaa R[[X]] + X R[[X]] = \{f \in R[[X]]: f_0 \in \aaa\},$$ and let $k$ be a positive integer.  For all $i \leq k$ let $\Pi_i \subseteq R$ be a system of representatives for $R/\aaa^i$.  For all $r \in R$ and all $i \leq k$ we let $r \, \Mod \, \aaa^i$ denote the unique element of $\Pi_i$ congruent to $r$ modulo $\aaa^i$.  Moreover, for all $f \in R[[X]]$ we let $f \, \Mod \, \AAA^k$ denote the unique polynomial in $R[X]$ of degree at most $k-1$ congruent to $f$ modulo $\AAA^k$ with $i$th coefficient in $\Pi_{k-i}$ for all $i < k$.  One has
$$f \, \Mod \, \AAA^k = \sum_{i = 0}^{k-1} (f_i \, \Mod \, \aaa^{k-i})X^i,$$
and therefore the function $\Mod \, \AAA^k: R[[X]] \longrightarrow R[X]$ is completely determined by the set of functions $\Mod \, \aaa^i: R \longrightarrow \Pi_i$ for $i \leq k$, and vice versa.

The following proposition and algorithm, which are inspired by \cite[Proposition 3]{sum} and its proof, show that $P_f$ and $U_f$ are stable under approximations of $f$ and can be approximated to any desired degree of accuracy provided that one can perform the ring operations in $R/\aaa^i$ for sufficiently large $i$.

\begin{proposition}\label{estimate} Let $R$ be a ring and $\aaa$ an ideal of $R$ such that $R$ is complete with respect to some $\aaa$-filtration of $R$.  Let $\AAA = \aaa R[[X]] +XR[[X]]$, and let $n$ be a nonnegative integer and $N$ a positive integer.  Let $f, g \in R[[X]]$ be $\aaa$-distinguished of order $n$ with $f \equiv g \ (\Mod \, \AAA^{(n+1)N})$. Then $$P_f \equiv P_g \ (\Mod \, \AAA^{N+1})$$ and $$U_f \equiv U_g \ (\Mod \, \AAA^N).$$ Moreover, for all $i \leq (n+1)N$ let $\Pi_i \subseteq R$ be a system of representatives for $R/\aaa^i$.  Assume $g = f \, \Mod \, \AAA^{(n+1)N}$.  Let $t = \tau_n(g)^{-1} \, \Mod \AAA^{(n+1)N-n}$.  Define $S_{i} \in R[[X]]$ recursively, with $S_{0} = 1$ and $$S_{i} = \tau_n (t\alpha_n(g) S_{i-1}) \, \Mod \, \AAA^{(n+1)N-n(i+1)}$$
for all positive integers $i \leq N-1$.
Then one has
$$U_f^{-1} \equiv  t\sum_{i = 0}^{N-1} (-1)^i S_{i} \ (\Mod \, \AAA^N)$$
and
$$P_f \equiv  gt \sum_{i = 0}^{N-1} (-1)^i S_{i} \ (\Mod \, \AAA^N).$$
\end{proposition}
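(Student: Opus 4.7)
The plan is to start from the explicit formula obtained in the proof of Theorem \ref{Weierstrass}: writing $\tau = \tau_n$ and $\alpha = \alpha_n$, and letting $T_h$ denote the $R$-linear operator $v \longmapsto \tau(\alpha(h)\tau(h)^{-1} v)$ on $R[[X]]$ for any $h$ that is $\aaa$-distinguished of order $n$, one has $U_h^{-1} = \tau(h)^{-1} \sum_{i=0}^\infty (-1)^i T_h^i(1)$. The crucial input, already exploited in that proof, is that $\alpha(h) \in \aaa R[X]$, so iterating $T_h$ improves the $\aaa$-adic filtration by one level at each step, giving $T_h^i(1) \in \aaa^i R[[X]] \subseteq \AAA^i$. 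Hence the tail $\sum_{i \geq N}(-1)^i T_h^i(1)$ lies in $\AAA^N$, and the infinite series can be truncated at $i=N-1$ modulo $\AAA^N$.

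The technical heart of the proof will be an induction on $i$ establishing
$$ S_i \equiv T_g^i(1) \ (\Mod \, \AAA^{M_i}) \qquad (0 \leq i \leq N-1), $$
where $M_i = (n+1)N - n(i+1)$. The base case is $S_0 = 1 = T_g^0(1)$. For the inductive step one has by definition of $S_i$ the congruence
$$ S_i - T_g^i(1) \equiv \tau_n\bigl((t-\tau(g)^{-1})\alpha(g) S_{i-1} + \tau(g)^{-1}\alpha(g)(S_{i-1} - T_g^{i-1}(1))\bigr) \ (\Mod \, \AAA^{M_i}). $$
One invokes: $t - \tau(g)^{-1} \in \AAA^{(n+1)N - n}$ by definition of $t$; $\alpha(g) \in \AAA$ from $\aaa$-distinguishedness; $S_{i-1} \in \AAA^{i-1}$ from the inductive hypothesis combined with $T_g^{i-1}(1) \in \aaa^{i-1} R[[X]]$; and $S_{i-1} - T_g^{i-1}(1) \in \AAA^{M_{i-1}}$. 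A short calculation then shows the interior of $\tau_n$ lies in $\AAA^{(n+1)N - ni + 1}$, and since $\tau_n$ degrades the $\AAA$-filtration by $n$ levels, the whole expression lands in $\AAA^{M_i + 1} \subseteq \AAA^{M_i}$, completing the induction. Since $M_{N-1} = N$ and $t - \tau(g)^{-1} \in \AAA^{(n+1)N - n} \subseteq \AAA^N$, multiplying by $t$ and summing gives
$$ t\sum_{i=0}^{N-1}(-1)^i S_i \equiv \tau(g)^{-1}\sum_{i=0}^{N-1}(-1)^i T_g^i(1) \equiv U_g^{-1} \ (\Mod \, \AAA^N). $$

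For the stability assertions, a parallel induction comparing $T_f^i(1)$ with $T_g^i(1)$—now using $\alpha(f) - \alpha(g) \in \AAA^{(n+1)N}$ and $\tau(f)^{-1} - \tau(g)^{-1} \in \AAA^{(n+1)N - n}$—yields $T_f^i(1) \equiv T_g^i(1) \ (\Mod \, \AAA^{2N-1})$ for $0 \leq i \leq N-1$, with both tails for $i \geq N$ already in $\AAA^N$. Summing and combining with $\tau(f)^{-1} - \tau(g)^{-1} \in \AAA^N$ gives $U_f^{-1} \equiv U_g^{-1} \ (\Mod \, \AAA^N)$, hence $U_f \equiv U_g \ (\Mod \, \AAA^N)$. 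The decomposition $P_f - P_g = U_f^{-1}(f - g) + (U_f^{-1} - U_g^{-1}) g$ then yields $P_f \equiv P_g \ (\Mod \, \AAA^{N+1})$, using $f - g \in \AAA^{(n+1)N} \subseteq \AAA^{N+1}$ and $g \in \AAA$. The explicit formula for $P_f$ then follows from that for $U_f^{-1}$ by multiplying by $g$ and invoking the identity $U_f^{-1} g \equiv U_g^{-1} g = P_g \equiv P_f \ (\Mod \, \AAA^{N+1})$.

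The main obstacle will be the careful bookkeeping of two competing effects through the recursion: each iteration gains one level of $\aaa$-depth from multiplication by $\alpha(g)$ but loses $n$ levels of $\AAA$-depth from $\tau_n$. The prescribed truncation depths $M_i = (n+1)N - n(i+1)$ are precisely what make these effects balance over $N$ iterations so that the final sum remains accurate modulo $\AAA^N$, which also explains why the hypothesis requires knowledge of $f$ to depth $(n+1)N$ in $\AAA$ rather than merely depth $N$.
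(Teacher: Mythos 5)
Your proposal follows essentially the same route as the paper's proof: both start from the series $U_h^{-1}=\tau(h)^{-1}\sum_{i\geq 0}(-1)^iT_h^i(1)$ established in the proof of Theorem \ref{Weierstrass}, truncate it at $i=N-1$ using $\im T_h^i\subseteq\aaa^i[[X]]$, run an induction producing congruences at depth $(n+1)N-n(i+1)$ (which equals $N$ at $i=N-1$, explaining the required input depth $(n+1)N$), and deduce $P_f\equiv P_g$ from the same bilinear decomposition of $P_f-P_g$ --- the paper groups it as $(V_f-V_g)f+V_g(f-g)$ where you write $U_f^{-1}(f-g)+(U_f^{-1}-U_g^{-1})g$, and the paper compares $S_i$ with $T_f^i(1)$ rather than with $T_g^i(1)$, neither of which is a substantive difference.

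Two small slips are worth flagging. First, your intermediate claim that $T_f^i(1)\equiv T_g^i(1)\ (\Mod\,\AAA^{2N-1})$ uniformly for $0\leq i\leq N-1$ is too strong: the induction only yields depth $(n+1)N-n(i+1)$, which degrades to $N$ at $i=N-1$ (your figure $2N-1$ is the $i=0$, $n=1$ value). Since the use you make of this congruence only requires depth $N$, nothing downstream breaks, but the statement as written is not what the induction proves. Second, the inclusions $f-g\in\AAA^{(n+1)N}\subseteq\AAA^{N+1}$ and $g\in\AAA$ that you invoke for $P_f\equiv P_g\ (\Mod\,\AAA^{N+1})$ both require $n\geq 1$; when $n=0$ the series $f$ and $g$ are units and $P_f=1=P_g$ holds trivially, so this case must be disposed of separately, as the paper does explicitly.
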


\begin{proof}
We use the notation as in the proof of Theorem \ref{Weierstrass}. Let $V_f = U_f^{-1}$ and $T_f = \tau \circ (\alpha(f)\tau(f)^{-1}-)$, and define $V_g$ and $T_g$ similarly.   One has
\begin{equation}\label{congruence}
V_f =  \tau(f)^{-1}\sum_{i = 0}^\infty (-1)^i T_f^i(1) \equiv \tau(f)^{-1}\sum_{i = 0}^{N-1} (-1)^i T_f^i(1) \ (\Mod \, \aaa ^N R[[X]])
\end{equation}
and a similar congruence for $V_g$.
Now, $\alpha(\AAA^i) \subseteq \AAA^i$ for all $i$ and $\tau(\AAA^i) = \AAA^{i-n}$ for all $i \geq n$.  Thus $\alpha(f) \equiv \alpha(g) \ (\Mod \, \AAA^{(n+1)N})$
and $\tau(f) \equiv \tau(g) \ (\Mod \, \AAA^{(n+1)N-n})$,
whence
$$T_f(1) \equiv T_g(1) \ (\Mod \, \AAA^{(n+1)N-2n}).$$
By induction on $i$ one has
$$T_f^i(1) \equiv T_g^i(1) \ (\Mod \, \AAA^{(n+1)N-n(i+1)})$$
for all $i \leq N- 1$.  Therefore, since $(n+1)N-nN = N$, by the congruence (\ref{congruence}) above one has $V_f \equiv V_g \ (\Mod \, \AAA^{N})$
and thus $U_f \equiv U_g \ (\Mod \, \AAA^{N}).$
If $f$ is a unit then $P_f = 1 = P_g$; otherwise $f \in \AAA$ and one has
$$P_f - P_g = (V_f-V_g)f+V_g(f-g) \in \AAA^{N+1}.$$
Therefore $P_f \equiv P_g \ (\Mod \, \AAA^{N+1})$.  Finally, by induction on $i$ one has $S_i \equiv T_f^i(1) \ (\Mod \,\AAA^{(n+1)N-n(i+1)})$ for all $i \leq N-1$, and the congruences for $U_f^{-1}$ and $P_f$ stated in the proposition follow from the congruence (\ref{congruence}) above.
\end{proof}

\begin{algorithm}\label{walg}
Let $R$ be a ring and $\aaa$ an ideal of $R$ such that $R$ is complete with respect to some $\aaa$-filtration of $R$.  Let $\AAA = \aaa R[[X]] +XR[[X]]$, let $n$ be a nonnegative integer and $N$ a positive integer, and let $f \in R[[X]]$ be $\aaa$-distinguished of order $n$.   For all $i \leq (n+1)N$ let $\Pi_i \subseteq R$ be a system of representatives for $R/\aaa^i$, and let $g = f \, \Mod \, \AAA^{(n+1)N}$.  Given the data $(R,n,N, g)$ and $(\Mod \, \aaa^i)_{i \leq (n+1)N}$, the following algorithm returns $U_f^{-1} \, \Mod \, \AAA^N$ and $P_f \, \Mod \, \AAA^N$.
\begin{enumerate}
\item Compute $t = \tau_n(g)^{-1} \, \Mod \, \AAA^{(n+1)N-n}$.
\item Set $S_0 = 1$.  Compute
$$S_i = \tau_n(t\alpha_n(g) S_{i-1}) \, \Mod \, \AAA^{(n+1)N-n(i+1)}$$
for all positive integers $i \leq N-1$.
\item Compute
$$V =   \left(t\sum_{i = 0}^{N-1} (-1)^i S_{i} \right) \, \Mod \, \AAA^N$$
and
$$P = (gV) \, \Mod \, \AAA^N.$$
\item Return $U_f^{-1} \, \Mod \, \AAA^N =  V$ and $P_f \, \Mod \, \AAA^N = P$.
\end{enumerate}
\end{algorithm}

\begin{proof}
This follows immediately from Proposition \ref{estimate}.
\end{proof}

Next, in the case where $R$ is a UFD and $\aaa$ is prime, we provide an alternative algorithm for computing associated Weierstrass polynomials to any desired degree of accuracy, given any algorithm for factoring polynomials in $R[X]$ to any desired degree of accuracy.  The algorithm is based on a suggestion by Chase Franks for computing Weierstrass polynomials over $R = \mathcal{O}_K$, where $K$ is a finite extension of $\QQ_p$.

Let $R$ be a ring and $a \in R$.  We say that a {\it prime factorization of $a$ in $R$} is a sequence $\Phi = (\pi_0,\pi_1, \pi_2, \ldots, \pi_k)$ such that $a = \pi_0 \pi_1 \pi_2 \cdots \pi_k$, where $\pi_0 \in R$ is a unit and $\pi_i \in R$ is prime 
for $i > 0$.  The nonnegative integer $k$ is called the {\it length} of the prime factorization $\Phi$, which we denote by $l(\Phi)$. 
Write $\Phi_i = \pi_i$ for all $i \leq l(\Phi)$.  If $a$ is a nonzerodivisor, then all prime factorizations of $a$ in $R$ are unique in the obvious sense.  

\begin{proposition}\label{prep}
Let $R$ be a UFD and $\ppp$ a prime ideal of $R$ such that $R$ is complete with respect to some $\ppp$-filtration of $R$.  Let $f \in R[X]$ be $\ppp$-distinguished, and suppose that $f_0 \neq 0$ and $\Phi$ is a prime factorization of $f$ in $R[X]$, where the $\Phi_i$ are indexed so that $(\Phi_i)_0$ for $i > 0$ is a unit in $R$ if and only if $i > k$, where $k \in \ZZ_{\geq 0}$.  Let $u$ be the leading coefficient of the polynomial $\Phi_1 \cdots \Phi_k$.  Then $(u^{-1}, \Phi_1, \ldots, \Phi_k)$ is a prime factorization of $P_f$ in $R[X]$.  Also, one has $U_f \in R[X]$, and $(u \Phi_0, \Phi_{k+1}, \Phi_{k+2}, \ldots, \Phi_{l(\Phi)})$ is a prime factorization of $U_f$ in $R[X]$.
\end{proposition}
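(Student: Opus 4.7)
The plan is to build a Weierstrass-times-unit factorization of $f$ from the given prime factorization and invoke the uniqueness in Theorem \ref{Weierstrass}. First, by Lemma \ref{preplemmab}(2) and its converse (applicable since $\ppp$ is prime), each prime factor $\Phi_i$ is $\ppp$-distinguished of some order $n_i \ge 0$ with $\sum_i n_i = n$, where $n$ is the order of $f$. Completeness of $R$ with respect to some $\ppp$-filtration $\ff$ yields two key consequences: (i) for any $p \in \ppp$ the partial sums of $\sum_j (-p)^j$ are Cauchy in the $\ff$-topology and hence converge, so $1 + \ppp \subseteq R^*$, from which $r \in R$ is a unit iff its image in $R/\ppp$ is a unit; and (ii) $\bigcap_n \ppp^n \subseteq \bigcap_i \ff_i = (0)$ by Hausdorffness. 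Consequence (i), together with $\ppp$-distinguishedness of each $\Phi_i$, forces the dichotomy that $(\Phi_i)_0$ is either a unit in $R$ (with $n_i = 0$) or in $\ppp$ (with $n_i \ge 1$), matching the given indexing: $n_i = 0$ iff $i > k$.

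For each $i$ with $1 \le i \le k$, apply Theorem \ref{Weierstrass} to write $\Phi_i = V_i P_i$ in $R[[X]]$ with $P_i \in R[X]$ a monic $\ppp$-Weierstrass polynomial of degree $n_i$ and $V_i \in R[[X]]$ a unit. Consequence (ii) lets Lemma \ref{preplemma4}(1) be invoked to place $V_i$ in $R[X]$. Primality of $\Phi_i$ in $R[X]$, combined with the fact that $P_i$ (monic of positive degree) is not a unit in $R[X]$, forces $V_i \in R^*$; in particular, $\deg \Phi_i = n_i$ and $V_i$ is the leading coefficient of $\Phi_i$.

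Assembling, set $u := \prod_{i=1}^k V_i$ (the leading coefficient of $\Phi_1 \cdots \Phi_k$, as in the statement) and $P := \prod_{i=1}^k P_i$. Lemma \ref{preplemmab}(1) says $P$ is $\ppp$-Weierstrass of degree $n$, and $\Phi_1 \cdots \Phi_k = u P$. For $i > k$ the factor $\Phi_i$ is a unit in $R[[X]]$, as is $\Phi_0$, so $h := \Phi_0 \prod_{i>k} \Phi_i$ is a unit in $R[[X]]$. Thus $f = P \cdot (uh)$ is a Weierstrass-times-unit decomposition, and the uniqueness clause of Theorem \ref{Weierstrass} yields $P_f = P = u^{-1} \Phi_1 \cdots \Phi_k$ and $U_f = uh = u\Phi_0 \Phi_{k+1} \cdots \Phi_{l(\Phi)}$, both polynomials in $R[X]$; the prime factorizations claimed fall out.

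The main obstacle is aligning the indexing by $k$ with the orders $n_i$, which requires the equivalence ``unit in $R$ iff unit in $R/\ppp$'' for $\ppp$-distinguished elements; this is the principal nontrivial use of the completeness hypothesis. Once that alignment is in place, the rest is a structured application of Weierstrass preparation factor by factor, together with the polynomial-ness of each $V_i$ guaranteed by Lemma \ref{preplemma4}(1).
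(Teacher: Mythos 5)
Your proof is correct, but it runs in the opposite direction from the paper's. The paper starts from the global factorization $f = U_f P_f$, puts $U_f$ in $R[X]$ via Lemma \ref{preplemma4}(1), and then sorts the given primes between the two factors by divisibility in $R[X]$: for $i > k$ the prime $\Phi_i$ cannot divide $P_f$, because by Lemma \ref{preplemmab} every factor of a $\ppp$-Weierstrass polynomial is a unit times a $\ppp$-Weierstrass polynomial and $\Phi_i$ is not of that form, while for $0 < i \leq k$ the prime $\Phi_i$ cannot divide $U_f$ since $(\Phi_i)_0$ does not divide the unit $(U_f)_0$; the two quotients then multiply to the unit $\Phi_0$, hence are constants. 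You instead build the Weierstrass decomposition from the bottom up: you prepare each $\Phi_i$ with $1 \leq i \leq k$ individually as $V_i P_i$, use Lemma \ref{preplemma4}(1) and irreducibility to force $V_i$ to be a constant unit, and then invoke the uniqueness clause of Theorem \ref{Weierstrass} on the reassembled product. Your route costs $k$ applications of Weierstrass preparation plus the bookkeeping identifying $u$ with $\prod_i V_i$, but it avoids the divisibility sorting in the UFD $R[X]$ entirely and yields as a byproduct that each $\Phi_i$ with $i \leq k$ is itself a unit of $R$ times a $\ppp$-Weierstrass polynomial. Your two preliminary observations---that completeness with respect to a $\ppp$-filtration makes every element of $1+\ppp$ a unit, so the dichotomy on $(\Phi_i)_0$ matches the indexing by $k$, and that $\bigcap_n \ppp^n = (0)$, so Lemma \ref{preplemma4}(1) applies---are genuinely needed in both arguments and are correctly justified; the paper leaves them implicit.
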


\begin{proof}
Let $P = P_f$ and $U = U_f$.  One has $U \in R[X]$ by Lemma \ref{preplemma4}(1), so $PU = f =  \Phi_0(\Phi_1 \cdots \Phi_k)(\Phi_{k+1} \cdots \Phi_{l(\Phi)})$ in $R[X]$, with $\Phi_i$ irreducible in $R[X]$ for all $i > 0$.  For all $i > k$, clearly $\Phi_i$ is not a unit in $R$ times a $\ppp$-Weierstrass polynomial, and therefore by Lemma \ref{preplemmab} one has $\Phi_i \nmid P$ in $R[X]$.  Also, for all $0< i \leq k$, since $(\Phi_i)_0 \nmid U_0$, one has $\Phi_i \nmid U$.  It follows that $\Phi_1 \cdots \Phi_k$ divides $P$ and $\Phi_{k+1}\cdots \Phi_{l(\Phi)}$ divides $U$ in $R[X]$; let the quotients in $R[X]$ be $P'$ and $U'$, respectively, so $P'U' = \Phi_0$ is a unit in $R[X]$.  Thus $P'$ and $P'^{-1}\Phi_0$ are units in $R[X]$, hence units in $R$, whence $P' = u^{-1}$ and $U' = u \Phi_0$.
\end{proof}

Let $R$ be a ring and $\aaa$ an ideal of $R$, and let $a \in R$.  Let us say that a {\it mod $\aaa$ prime factorization of $a$ in $R$} is a list $\Psi = (a_0, a_1, \ldots, a_k)$ of elements of $R$, with $a_0$ a unit in $R$, such that $a$ has a prime factorization $(\pi_0, \pi_1, \ldots, \pi_k)$ in $R$ with $\pi_i \equiv a_i \ (\Mod \aaa)$ for all $i$.
Write $\Psi_i = a_i$ for all $i \leq k$.  

\begin{algorithm}\label{computeweierstrassalg}
Let $R$ be a UFD and $\ppp$ a prime ideal of $R$ such that $R$ is complete with respect to some $\ppp$-filtration of $R$.  Let $\PPP = \ppp R[X] +XR[X]$, and let $N$ be an positive integer.   Let $f \in R[[X]]$ be $\ppp$-distinguished of order $n$ with $f_0 \neq 0$.  For all $i \leq (n+1)N$ let $\Pi_i \subseteq R$ be a system of representatives for $R/\ppp^i$.  Suppose that $\Psi$ is a mod $\PPP^{(n+1)N}$ prime factorization of $g = f \, \Mod \, \PPP^{(n+1)N}R[[X]]$ in $R[X]$, where the $\Psi_i$ are indexed so that $(\Psi_i)_0$ for $i > 0$ is a unit in $R$ if and only if $i > k$.  Given the data $(R,\Psi,k)$, the following algorithm returns a mod $\PPP^N$ prime factorization of $P_g$ in $R[X]$ with $P_g \equiv P_f \, (\Mod \PPP^N)$.
\begin{enumerate}
\item Let $u$ be the leading coefficient of the polynomial $\Psi_1 \cdots \Psi_k$.
\item Return $(u^{-1}, \Psi_1, \ldots, \Psi_k)$. 
\end{enumerate}
\end{algorithm}

\begin{proof}
This follows from Propositions \ref{estimate} and \ref{prep}.
\end{proof}

\section{Factoring algorithms}\label{sec:6}

In this section we provide an algorithm for computing any number of coefficients of the irreducible factors, counting multiplicities, of a polynomial in $R[[X]]$ for a PID $R$.  

The following algorithm computes a prime factorization of any power series in $R[[X]]$, provided that one can factor corresponding Weierstrass polynomials over the relevant completions of $R$ to infinite precision, where $R$ is a PID.

\begin{algorithm}\label{mainfactoralg}
Let $R$ be a PID with quotient field $K$ and $f$ a nonzero formal power series in $R[[X]]$. Let $\Phi(c(f))$ be a prime factorization of $c(f)$ in $R$.  Let $r$ be the least nonnegative integer such that $f_r \neq 0$.   Let $\PP$ be a complete set of nonassociate prime factors of $f_r/c(f)$ in $R$.    For each $\pi \in \PP$, let $(1, \Phi_{\pi, 1},\cdots, \Phi_{\pi, l_\pi})$ be a prime factorization of $P_{f/c(f)X^r, \RRR_{(\pi)}}$ in $\RRR_{(\pi)}[X]$, and for each $i \leq l_\pi$ let $\phi_{\pi, i}$ be an element of  $R[[X]]$ such that $\phi_{\pi, i} = V_{\pi, i} \Phi_{\pi, i}$ for some unit $V_{\pi, i} \in \RRR_{(\pi)}[[X]]$ and $(\phi_{\pi, i})_0$ is a power of $\pi$, defined as in Lemma \ref{preplemma}(2).    Let $V =  (f/c(f) X^r)\left(\prod_{\pi \in \PP}\prod_{i \leq l_\pi} \phi_{\pi, i}\right)^{-1} \in K[[X]]$.  Given the data $(f,r, \Phi(c(f)), V)$ and $(\phi_{\pi, i})_{\pi \in \PP, i \leq l_\pi}$, the following algorithm returns a prime factorization of $f$ in $R[[X]]$.
\begin{enumerate}
\item Write $\Phi(c(f)) = (u, \tau_1, \ldots, \tau_s)$ and $(\phi_{\pi, i})_{\pi \in \PP, i \leq l_\pi}= (g_1, \ldots, g_t)$.
\item Return $(uV, \tau_1, \ldots, \tau_s, X, X, \ldots, X, g_1, \ldots, g_t)$.
\end{enumerate}
\end{algorithm}

\begin{proof}
Replacing $f$ with $f/c(f)X^r$, we may assume without loss of generality that $r = 0$ and $c(f) = 1$.  Factor $f_0$ as a unit $u$ times a product $\prod_{\pi \in \PP} \pi^{n_\pi}$.  Replacing $f$ with $u^{-1}f$, we may assume that $u = 1$.  Choose $b_\pi \in R$ with $\sum_{\pi \in \PP} b_\pi \prod_{\sigma \in  \PP \backslash\{\pi\}} \sigma^{n_\sigma} = 1$.  By Proposition \ref{factorbasic} there is a unique $g \in XR[[X]]$ such that  $f =  \prod_{\pi \in \PP} (\pi^{n_\pi} + b_\pi g)$.  Let $f_\pi = \pi^{n_\pi} + b_\pi g$ and $f_\pi' = f/f_\pi = \prod_{\sigma \in  \PP \backslash\{\pi\}} (\sigma^{n_\sigma} + b_\sigma g)$. 
In $\RRR_{(\pi)}[X]$ one has  $P_{f_\pi} = P_f = \Phi_{\pi, 1} \cdots \Phi_{\pi, l_\pi}$ and $f_\pi U_f = f U_{f_\pi}$.  By Lemma \ref{preplemmab} each $\Phi_{\pi, i}$ is a unit $u_{\pi, i}$ in $\RRR_{(\pi)}$ times a $\pi \RRR_{(\pi)}$-Weierstrass polynomial. 
Since $P_{\phi_{\pi, i}} = u_{\pi, i}^{-1} \Phi_{\pi, i}$ and $(\phi_{\pi, i})_0$ is a power of $\pi$, by Theorem \ref{irredtest} each $\phi_{\pi, i}$ is irreducible in $R[[X]]$.  Moreover, one has $$f_\pi = U_{f_\pi} P_{f_\pi} = f_\pi'^{-1}U_f \Phi_{\pi, 1} \cdots \Phi_{\pi, l_\pi} = V_\pi  \phi_{\pi, 1} \cdots \phi_{\pi, l_\pi},$$ with  each $\phi_{\pi, i}$ irreducible in $R[[X]]$, where $V_\pi =  (f_\pi' V_{\pi, 1} \cdots V_{\pi, l_\pi})^{-1} U_f$ is a unit in $\RRR_{(\pi)}[[X]]$; by Lemma \ref{nzdlemma} one has $V_\pi \in R[[X]]$; and since $(V_\pi)_0$ divides $\pi^{n_\pi}$ in $R$ and $(V_\pi)_0$ is a unit in $\RRR_{(\pi)}$, it follows that $(V_\pi)_0$ is a unit in $R$ and therefore $V_\pi$ is a unit in $R[[X]]$.  Thus $f = \prod_{\pi \in \PP} f_\pi = (\prod_{\pi \in \PP} V_\pi)(  \prod_{\pi \in \PP}\prod_{ i \leq l_\pi} \phi_{\pi, i})$ and $V = \prod_{\pi \in \PP} V_\pi$ is a unit in $R[[X]]$, whence $(V, g_1, \ldots, g_t)$ is a prime factorization of $f$ in $R[[X]]$.
\end{proof}

For polynomials the algorithm above simplifies as follows.

\begin{algorithm}\label{mainfactortheorem}
Let $R$ be a PID with quotient field $K$ and $f$ a nonzero polynomial in $R[X]$.   Let $\Phi(c(f))$ be a prime factorization of $c(f)$ in $R$.  Let $r$ be the least nonnegative integer such that $f_r \neq 0$.  Let $\PP$ be a complete set of nonassociate prime factors of $f_r/c(f)$ in $R$.   For each $\pi \in \PP$, let $(1, \Phi_{\pi, 1},\cdots, \Phi_{\pi, l_\pi})$ be a prime factorization of $f/c(f)X^r$ in $\RRR_{(\pi)}[X]$, where the $\Phi_{\pi, i}$ are indexed so that $(\Phi_{\pi, i})_0$ a unit in $\RRR_{(\pi)}$ if and only if $i > k_\pi$, where $k_\pi  \in \ZZ_{\geq 0}$.  For each $i \leq k_\pi$ let $\phi_{\pi, i}$ be an element of $R[[X]]$ such that $\phi_{\pi, i} = V_{\pi, i} \Phi_{\pi, i}$ for some unit $V_{\pi, i} \in \RRR_{(\pi)}[[X]]$ and $(\phi_{\pi, i})_0$ is a power of $\pi$, defined as in Lemma \ref{preplemma}(2).   Let $V =  (f/c(f) X^r)\left(\prod_{\pi \in \PP}\prod_{i \leq k_\pi} \phi_{\pi, i}\right)^{-1} \in K[[X]]$.  Given the data $(f,r, \Phi(c(f)), V)$ and $(\phi_{\pi, i})_{\pi \in \PP, i \leq k_\pi}$, the following algorithm returns a prime factorization of $f$ in $R[[X]]$.
\begin{enumerate}
\item Write $\Phi(c(f)) = (u, \tau_1, \ldots, \tau_s)$ and $(\phi_{\pi, i})_{\pi \in \PP, i \leq k_\pi} = (g_1, \ldots, g_t)$.
\item Return $(uV, \tau_1, \ldots, \tau_s, X, X, \ldots, X, g_1, \ldots, g_t)$.
\end{enumerate}
\end{algorithm}

\begin{proof}
The proof is similar to that of Algorithm \ref{mainfactoralg}.
\end{proof}

The following algorithm follows as in the proof of Lemma \ref{preplemma}.

\begin{algorithm}\label{preplemmaalg}
Let $R \subseteq S$ be rings and $f \in S[[X]]$.  Suppose that $f_0$ is a nonzerodivisor in $S$ and the $R$-algebra homomorphism $R \longrightarrow S/f_0S$ is surjective, and let $d \in R \cap f_0S$.  Let $N > 1$ be an integer.  For all $i \leq N-1$ let $\Pi_i \subseteq S$ be a system of representatives of $S/f_0^i S$, with $\Pi_1 \subseteq R$,  and for any $s \in S/f_0^i S$ let $s \, \Mod f_0^i$ denote the unique element of $\Pi_i$ with image $s$ in $S/f_0^iS$. Let $R' = R[\Pi] \subseteq S$, where $\displaystyle \Pi = \bigcup_{i \leq N-1} \Pi_i$.  Let $f' = f \, \Mod \, (f_0, X)^N \in R'[X]$.   Given the data $(R'[f_0^{-1}],d,N,f')$ and $(\Mod \, f_0^i)_{i \leq N-1}$, the following algorithm returns $g \, \Mod \, X^N$ and $h \, \Mod \, (f_0,X)^{N-1} \in R'[X]$ for some $g \in R[[X]]$ and $h \in S[[X]]$  with $g_0 = d$ and $g = hf$. 
\begin{enumerate}
\item Set $g_0' = d$ and $h_0' = f_0^{-1}d$.
\item For any positive integer $n \leq N-1$ define $g_n'$ and $h_n'$ recursively as follows: $$g_n' = \left( \sum_{i=0}^{n-1} h_i' f_{n-i}'  \right) \, \Mod \, f_0,$$
$$h_n' =  f_0^{-1}\left(g_n' -  \sum_{i=0}^{n-1}  h_i' f_{n-i}' \right)  \, \Mod \, f_0^{N-1-n}.$$
\item Return
$$g \, \Mod \, X^N  = \sum_{i = 0}^{N-1} g_i' X^i$$
and
$$h \, \Mod \, (f_0,X)^{N-1} = \sum_{i = 0}^{N-2} h_i' X^i.$$
\end{enumerate}
\end{algorithm}

Finally, in the following algorithm, which is our main result on factoring formal power series, we indicate how to implement Algorithm \ref{mainfactortheorem} to any desired degree of accuracy without requiring infinite precision in $R[[X]]$ or in the completions of $R$.

\begin{algorithm}\label{mainfactortheoremeffective}
Let $R$ be a PID, let $f$ be a nonzero polynomial in $R[X]$, and let $N >1$ an integer.    Let $\Phi(c(f))$ be a prime factorization of $c(f)$ in $R$.  Let $r$ be the least nonnegative integer such that $f_r \neq 0$.  Let $f' = f \, \Mod \, X^{N+r} R[[X]] \in R[X]$.  Let $\PP$ be a complete set of nonassociate prime factors of $f_r/c(f)$ in $R$.  For all $\pi \in \PP$, let $(1, \Psi_{\pi, 1},\ldots, \Psi_{\pi, l_\pi})$ be a mod $(f_r/c(f),X)^N$ prime factorization of $f/c(f)X^r$ in $\RRR_{(\pi)}[X]$.   Let $R' = R[(\Psi_{\pi, i})_0^{-1}: \pi \in \PP, i \leq l_\pi\}]$.  For each $i \leq l_\pi$, let $t_{\pi, i} \in \ZZ_{\geq 0}$ with $(\Psi_{\pi, i})_0 \RRR_{(\pi)} = \pi^{t_{\pi, i}} \RRR_{(\pi)}$.  Let $v = (f_r/c(f)) \prod_{\pi \in \PP}\prod_{i \leq l_\pi} \pi^{-t_{\pi,i}} \in R$.  For each $j \leq m_\pi = (N-1)\max\{t_{\pi, i}: i \leq l_\pi\}$, let $\Pi_{\pi, j} \subseteq R$ be a system of representatives for $R/\pi^jR$, and for any $s \in \RRR_{(\pi)}/\pi^j\RRR_{(\pi)}$ let $s \, \Mod \, \pi^j$ denote the unique element of $\Pi_{\pi, j}$ with image $s$ in $\RRR_{(\pi)}/\pi^j\RRR_{(\pi)}$. Given the data $(R', N,r,f',\Phi(c(f)),v)$, $(\Psi_{\pi, i}, t_{\pi, i})_{\pi \in \PP, i \leq l_\pi}$, and $(\Mod \, \pi^j)_{\pi \in \PP, j \leq m_\pi}$, the following algorithm returns a mod $X^{N} R[[X]]$ prime factorization of  $f$ in $R[[X]]$.
\begin{enumerate}
\item For each $\pi \in \PP$ reindex the $\Psi_{\pi, i}$ so that $t_{\pi, i} = 0$ if and only if $i > k_\pi$, where $k_\pi  \in \ZZ_{\geq 0}$.
\item Using Algorithm \ref{preplemmaalg}, for all $i \leq k_\pi$  compute $\psi_{\pi, i}' = \psi_{\pi, i} \, \Mod X^NR[[X]]$, where $\psi_{\pi, i } = W_{\pi, i} \Psi_{\pi, i} \in R[[X]]$ for some unit $W_{\pi, i} \in \RRR_{(\pi)}[[X]]$ and $(\psi_{\pi, i})_0 = \pi^{t_{\pi, i}}$.
\item Write  $\Phi(c(f)) = (u, \tau_1, \ldots, \tau_s)$ and $(\psi_{\pi, i}')_{\pi \in \PP, i \leq k_\pi} = (g_1, \ldots, g_t)$.
\item Compute $$V =  (f'/c(f) X^r)\left(g_1 \cdots g_t \right)^{-1} \ \Mod \, X^{N+r} R[[X]]$$ by recursion on the coefficients of $V$, noting that $V_0 = v$ is a unit in $R$.
\item Return $(uV, \tau_1, \ldots, \tau_s, X, X, \ldots, X, g_1, \ldots, g_t)$.
\end{enumerate}
\end{algorithm}

\begin{proof}
 For each $\pi \in \PP$ there exists a  prime factorization $(1, \Phi_{\pi, 1} , \ldots, \Phi_{\pi, l_\pi})$ of $f/c(f)X^r$ in $\RRR_{(\pi)}[X]$ with $\Phi_{\pi, i} \equiv \Psi_{\pi, i} \ (\Mod \, (f_r/c(f), X)^N)$ for all $i \leq l_\pi$. 
Let $i \leq k_\pi$.  Then $\Psi_{\pi, i} \equiv \Phi_{\pi, i} \ (\Mod \, ((\Phi_{\pi, i})_0,X)^N)$, which in turn implies $(\Psi_{\pi, i})_0 \equiv (\Phi_{\pi, i})_0 \ (\Mod \, ((\Phi_{\pi, i})_0)^2)$  and therefore $(\Psi_{\pi, i})_0 \RRR_{(\pi)}  =  (\Phi_{\pi, i})_0 \RRR_{(\pi)}$.  By Lemma \ref{preplemma}(3), then, it follows that $$\psi_{\pi,i}' = \psi_{\pi, i} \, \Mod \, X^N R[[X]] = \phi_{\pi, i} \, \Mod \, X^NR[[X]],$$
where  $\phi_{\pi, i } = V_{\pi, i} \Phi_{\pi, i} \in R[[X]]$ for some unit $V_{\pi, i} \in \RRR_{(\pi)}[[X]]$.  Therefore, by the notation introduced in step (3) we may write $(\phi_{\pi, i})_{\pi \in \PP, i \leq k_\pi} = (G_1, \ldots, G_t)$ with $G_i \equiv g_i \, (\Mod \, X^N R[[X]])$ for all $i$. By Algorithm \ref{mainfactortheorem}, $$(uW, \tau_1, \ldots, \tau_s, X, X, \ldots, X, G_1, \ldots, G_t)$$ is a prime factorization of $f$ in $R[[X]]$, where $W =  (f/c(f) X^r )(G_1 \ldots G_t)^{-1}$. Moreover, one has $$V c(f)X^r g_1 \cdots g_t \equiv f' \equiv f \equiv W c(f) X^r g_1 \cdots g_t \, (\Mod X^{N+r}R[[X]]),$$
and therefore $V \equiv W \, (\Mod X^NR[[X]])$.   Therefore $$(uV, \tau_1, \ldots, \tau_s, X, X, \ldots, X, g_1, \ldots, g_t)$$ is a mod $X^N R[[X]$ prime factorization of $f$ in $R[[X]]$.
\end{proof}

\section{Irreducibility criteria}\label{sec:7}

In this section we provide an alternative (and shorter) proof of Theorem \ref{irredtest}.  Our main result in this section is the following theorem, which, together with Corollary \ref{PIDlemma}(2), yields Theorem \ref{irredtest}.

\begin{theorem}\label{maintheorem2}
Let $R$ be an integral domain and $\ppp$ a prime ideal of $R$ such that   $\RRR_\ppp$ is an integral domain containing $R$, and let $f \in R[[X]]$ be $\widehat{\ppp}$-distinguished in $\RRR_\ppp[[X]]$.   Suppose that $\RRR_\ppp/f_0 \RRR_\ppp \cong R/f_0 R$ as $R$-algebras and any divisor of $f_0$ in $\RRR_\ppp$ is associate in $\RRR_\ppp$ to a divisor of $f_0$ in $R$.  Then $f$ is irreducible in $R[[X]]$ if and only if the Weierstrass polynomial $P_f$ associated to $f$ in $\RRR_\ppp[X]$ is irreducible in $\RRR_\ppp[X]$; in fact, one has the following.
\begin{enumerate}
\item If $f = gh$ for nonunits $g, h \in R[[X]]$, then $g$ and $h$ are $\widehat{\ppp}$-distinguished in $\RRR_\ppp[[X]]$, one has $P_f = P_g P_h$,  and $P_g$ and $P_h$ are nonunits in $\RRR_\ppp[X]$.
\item If $P_f = GH$ for nonunits $G, H$ in $\RRR_\ppp[X]$, then $f = gh$  for nonunits $g, h \in R[[X]]$ with $G = uP_g$ and $H = u^{-1} P_h$ for a unique unit $u \in \RRR_\ppp$.
\end{enumerate}
\end{theorem}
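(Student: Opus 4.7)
The plan is to prove both implications by passing between $R[[X]]$ and $\RRR_\ppp[[X]]$ via Weierstrass preparation. I would first dispatch the trivial case $f_0 = 0$ (which, by the hypothesis $\RRR_\ppp/f_0\RRR_\ppp \cong R/f_0R$, forces $R = \RRR_\ppp$) and then establish two preliminaries used throughout. First, $\widehat{\ppp}$ is prime in $\RRR_\ppp$: this follows since $\RRR_\ppp/\widehat{\ppp} \cong R/\ppp$ is a domain, and it unlocks the converse halves of Lemma \ref{preplemmab}(1) and (2) in $\RRR_\ppp[X]$ and $\RRR_\ppp[[X]]$. Second, a short diagram chase upgrades the given isomorphism to $R/dR \cong \RRR_\ppp/d\RRR_\ppp$ for every divisor $d$ of $f_0$ in $R$, equivalently $R \cap d\RRR_\ppp = dR$; a consequence is that a divisor of $f_0$ in $R$ is a unit in $R$ if and only if it is a unit in $\RRR_\ppp$.

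Part (1) is then essentially immediate: if $f = gh$ in $R[[X]]$ with both factors nonunit, Lemma \ref{preplemmab}(2) (applied over the prime $\widehat{\ppp}$) makes $g$ and $h$ each $\widehat{\ppp}$-distinguished in $\RRR_\ppp[[X]]$, so Weierstrass preparation and its uniqueness, applied to $f = (U_g U_h)(P_g P_h)$, give $P_f = P_g P_h$. A nonunit $g$ in $R[[X]]$ has $g_0$ nonunit in $R$, whence by the preliminary $g_0$ is nonunit in $\RRR_\ppp$, so $\deg P_g \geq 1$ and $P_g$ is not a unit in $\RRR_\ppp[X]$; likewise for $P_h$.

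For part (2), let $P_f = GH$ with $G, H$ nonunits in $\RRR_\ppp[X]$. Since $P_f$ is monic and $\RRR_\ppp$ is a domain, the leading coefficient $u$ of $G$ is a unit; replacing $(G, H)$ by the monic pair $(G', H') = (u^{-1} G, u H)$ and applying Lemma \ref{preplemmab}(1) over $\widehat{\ppp}$ makes both $G'$ and $H'$ into $\widehat{\ppp}$-Weierstrass polynomials of positive degree. The divisor-associate hypothesis, together with $G_0 \mid f_0$ in $\RRR_\ppp$ (which comes from $G \mid P_f$ and $(P_f)_0$ associate to $f_0$), supplies $d \in R$ with $d\RRR_\ppp = G'_0\RRR_\ppp$; preliminary (ii) yields the surjectivity of $R \longrightarrow \RRR_\ppp/G'_0\RRR_\ppp$ and the equality $R \cap G'_0\RRR_\ppp = dR$. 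Lemma \ref{preplemma}, applied with $S = \RRR_\ppp$ and $G'$ in the role of the power series, produces $g \in R[[X]]$ with $g_0 = d$ and a unit $V \in \RRR_\ppp[[X]]$ satisfying $g = V G'$; by uniqueness of Weierstrass preparation, $P_g = G'$. To obtain $h \in R[[X]]$ I would define $h = f/g \in \RRR_\ppp[[X]]$ and invoke Lemma \ref{nzdlemma}(1) with $g$ playing the role of ``$f$'' there: since $g_0 = d$ is a nonzerodivisor in $\RRR_\ppp$ satisfying $R \cap d\RRR_\ppp = dR$, one gets $R[[X]] \cap g\RRR_\ppp[[X]] = gR[[X]]$, forcing $h \in R[[X]]$. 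Writing $h = (U_f V^{-1}) H'$ then identifies $P_h = H'$ and (via the preliminary applied to $h_0 \in \ppp$) shows $h$ is a nonunit in $R[[X]]$; the relations $G = u P_g$ and $H = u^{-1} P_h$ are built in.

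The main obstacle is exactly this last step of part (2): forcing $h = f/g$ to lie in $R[[X]]$ cannot be handled by a second, independent invocation of Lemma \ref{preplemma} applied to $H'$ (which would produce some factor of $f$, but not automatically the quotient $f/g$), and instead requires the intersection statement of Lemma \ref{nzdlemma}(1), whose hypothesis $R \cap d\RRR_\ppp = dR$ for the auxiliary constant $d$ is precisely the content of preliminary (ii). Everything else is bookkeeping around Weierstrass preparation and its uniqueness.
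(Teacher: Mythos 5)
Your proof is correct, and its skeleton coincides with the paper's: the same preliminaries (primality of $\widehat{\ppp}$ in $\RRR_\ppp$ to unlock the converse of Lemma \ref{preplemmab}, and the upgrade of $\RRR_\ppp/f_0\RRR_\ppp \cong R/f_0R$ to $\RRR_\ppp/d\RRR_\ppp \cong R/dR$ for every divisor $d$ of $f_0$ in $R$), and the same treatment of part (1), which the paper isolates as Lemma \ref{maintheorem3}. The only divergence is in the endgame of part (2). You invoke Lemma \ref{preplemma} once, for $G'$ only, and then descend $h = f/g$ to $R[[X]]$ via Lemma \ref{nzdlemma}(1) applied with $a = g_0 = d$. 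The paper instead invokes Lemma \ref{preplemma} for \emph{both} $G'$ and $H'$, obtaining $g, h \in R[[X]]$ with prescribed constant terms $r, s$ and a unit $W = U_f U^{-1}V^{-1} \in \RRR_\ppp[[X]]$ with $f = Wgh$; it then observes that $W_0$ divides $f_0$ in $R$ and is a unit in $\RRR_\ppp$, hence a unit in $R$, and applies Lemma \ref{nzdlemma} with $a = f_0$ to conclude $W \in R[[X]]$, finally absorbing $W$ into $g$. So your parenthetical claim that the last step ``cannot be handled by a second, independent invocation of Lemma \ref{preplemma}'' is slightly overstated --- the paper does exactly that, at the cost of the extra unit-absorption step --- though you are right that neither route avoids Lemma \ref{nzdlemma}. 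Your version is marginally more economical (one application of Lemma \ref{preplemma} instead of two); the paper's is more symmetric in $G'$ and $H'$. Both are valid.
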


\begin{remark}
The hypotheses of Theorem \ref{maintheorem2} hold if $R$ is any integral domain, $\ppp = (\pi)$ a principal prime ideal of $R$ such that $\RRR_{\ppp}$ is an integral domain containing $R$, and $f \in R[[X]]$ is any $\widehat{\ppp}$-distinguished series in $\RRR_\ppp[[X]]$ with $f_0$ associate in $R$ to a power of $\pi$.
\end{remark}

To prove Theorem \ref{maintheorem2} we use Lemmas \ref{nzdlemma},  \ref{preplemmab}, \ref{preplemma}, and the following lemma.

\begin{lemma}\label{maintheorem3}
Let $R$ be an integral domain and $\ppp$ a prime ideal of $R$ such that   $\RRR_\ppp$ is an integral domain containing $R$.   For any $f \in R[[X]]$ that is $\widehat{\ppp}$-distinguished in $\RRR_\ppp[[X]]$, one has the following.
\begin{enumerate}
\item If $f = gh$ with $g, h \in R[[X]]$, then $g$ and $h$ are $\widehat{\ppp}$-distinguished in $\RRR_\ppp[[X]]$, and one has $P_f = P_g P_h$ and $U_f = U_g U_h$.
\end{enumerate}
Moreover, if every nonunit divisor of $f_0$ in $R$ is a nonunit in $\RRR_\ppp$, then we have the following.
\begin{enumerate}
\item[(2)] If $f = gh$ for nonunits $g, h$ in $R[[X]]$, then $P_g$ and $P_h$ are nonunits in $\RRR_\ppp[X]$.
\item[(3)] If $P_f$ is irreducible in $\RRR_\ppp[X]$, then $f$ is irreducible in $R[[X]]$.
\end{enumerate}
\end{lemma}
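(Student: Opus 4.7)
The plan is to address the three parts of Lemma \ref{maintheorem3} in sequence. For part (1), I will first observe that $\widehat{\ppp}$ is a prime ideal of $\RRR_\ppp$, since $\RRR_\ppp/\widehat{\ppp} \cong R/\ppp$ is an integral domain by the construction of the completion. The converse direction of Lemma \ref{preplemmab}(2), applied inside $\RRR_\ppp[[X]]$, then yields that both $g$ and $h$ are $\widehat{\ppp}$-distinguished, of orders $a$ and $b$ whose sum is the order of $f$. Next I would invoke Theorem \ref{Weierstrass} in $\RRR_\ppp[[X]]$ (which is complete with respect to the $\widehat{\ppp}$-adic filtration) to write $g = U_g P_g$ and $h = U_h P_h$. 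Then $f = (U_g U_h)(P_g P_h)$, where $U_g U_h$ is a unit in $\RRR_\ppp[[X]]$ and $P_g P_h$ is $\widehat{\ppp}$-Weierstrass of degree $a+b$ by Lemma \ref{preplemmab}(1). The uniqueness clause of Theorem \ref{Weierstrass} then forces $P_f = P_g P_h$ and $U_f = U_g U_h$.

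For part (2), I will assume $f = gh$ with both $g$ and $h$ nonunits in $R[[X]]$, and suppose toward a contradiction that $P_g$ is a unit in $\RRR_\ppp[X]$. Since $P_g$ is monic, this would force $P_g = 1$, whence $g = U_g$ is a unit in $\RRR_\ppp[[X]]$; equivalently, $g_0$ is a unit in $\RRR_\ppp$. But $g_0$ divides $f_0$ in $R$, so the strengthened hypothesis (every nonunit divisor of $f_0$ in $R$ is a nonunit in $\RRR_\ppp$) forces $g_0$ to be a unit in $R$, contradicting the assumption that $g$ is a nonunit in $R[[X]]$. The same reasoning applies to $P_h$.

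Part (3) then follows immediately: if $P_f$ is irreducible in $\RRR_\ppp[X]$, then $f$ cannot be a unit in $R[[X]]$ (else $P_f$ would equal $1$), and any factorization $f = gh$ with both $g$ and $h$ nonunits in $R[[X]]$ would, by (2) and (1), produce a nontrivial factorization $P_f = P_g P_h$ in $\RRR_\ppp[X]$, contradicting irreducibility. The main delicate point is the setup for part (1), namely ensuring that the converse direction of Lemma \ref{preplemmab}(2) is applicable; once the identification $\RRR_\ppp/\widehat{\ppp} \cong R/\ppp$ is used to conclude $\widehat{\ppp}$ is prime in $\RRR_\ppp$, the remaining bookkeeping with Theorem \ref{Weierstrass} and the hypothesis on nonunit divisors of $f_0$ is routine.
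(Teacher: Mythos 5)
Your proof is correct and follows essentially the same route as the paper's: Lemma \ref{preplemmab} plus the uniqueness clause of Theorem \ref{Weierstrass} for (1), then the monicity of $P_g$ and the hypothesis on divisors of $f_0$ for (2) and (3). The only quibble is your parenthetical that $\RRR_\ppp$ is complete with respect to the $\widehat{\ppp}$-adic filtration---in general it is only guaranteed to be complete with respect to the $\widehat{\ppp}$-filtration $(\widehat{\ppp^n})_n$ when $\ppp$ is not finitely generated, which is all that Theorem \ref{Weierstrass} requires, so the argument is unaffected.
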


\begin{proof}
Suppose that $f = gh$, where $g, h \in R[[X]]$.  Since $\widehat{\ppp}$ is prime, the series $g$ and $h$ are $\widehat{\ppp}$-distinguished by Lemma \ref{preplemmab}, and we have $$U_f P_f = f = gh = (U_g P_g )(U_h P_h) = (U_g U_h)(P_g P_h),$$ and $U_g U_h$ is a unit in $\RRR_\ppp[[X]]$ and $P_g P_h$ is a $\widehat{\ppp}$-Weierstrass polynomial in $\RRR_\ppp[X]$.  By uniqueness it follows that $U_f = U_g U_h$ and $P_f = P_g P_h$.   This proves (1).

Suppose now that every nonunit divisor of $f_0$ in $R$ is a nonunit in $\RRR_\ppp$ and that $f = gh$, where $g$ and $h$ are nonunits in $R[[X]]$. Since $P_g$ is monic, if it is constant then $P_g = 1$ and so $g = U_g$, whence $g_0$ is a unit in $\RRR_\ppp$ and a nonunit divisor of $f_0$ in $R$, contradicting the hypothesis on $f_0$.  Therefore $P_g$, and likewise $P_h$, is nonconstant, hence a nonunit in $\RRR_\ppp[X]$, and $P_f = P_g P_h$ is reducible in $\RRR_\ppp[X]$.  This proves (2) and (3).
\end{proof}

\begin{proof}[Proof of Theorem \ref{maintheorem2}]
If $d$ is any divisor of $f_0$ in $R$, then $\RRR_\ppp/d \RRR_\ppp \cong R/d R$, so $d$ is a nonunit in $\RRR_\ppp$ if and only if $d$ is a nonunit in $R$.  Therefore statement (1) holds by Lemma \ref{maintheorem3}.  Suppose that $P_f = GH$, where $G, H$ are nonunits in $\RRR_\ppp[X]$.  Since $P_f$ is monic, there is a unique unit $u \in \RRR_\ppp$ such that $G' = u^{-1}G$ and $H' = u H$ are monic.   By Lemma \ref{preplemmab} it follows that $G'$ and $H'$ are $\widehat{\ppp}$-Weierstrass polynomials, say, of degree $a$ and $b$, respectively.  If $a = 0$ then $G' = 1$ and $G$ is a unit in $\RRR_\ppp[X]$, which is a contradiction.  Therefore $a$, and likewise $b$, is a positive integer.  Now, $G'_0$ and $H_0'$ are divisors of $f_0$ in $\RRR_\ppp$.  Therefore they are associate, respectively, to divisors $r$ and $s$ of $f_0$ in $R$.   Since $\RRR_\ppp /r \RRR_\ppp \cong R/rR$, and likewise for $s$, by Lemma \ref{preplemma} there exist $g, h \in R[[X]]$ with $g_0 = r$ and $h_0 = s$ and units $U,V \in \RRR_\ppp[[X]]$ such that $g = UG'$ and $h = VH'$.  Then $g$ and $h$ are $\widehat{\ppp}$-distinguished of degree $a$ and $b$, respectively, with $P_g = G'$ and $P_h = H'$.  Therefore $G = uP_g$ and $H = u^{-1} P_h$.  Moreover, we have
$f = U_f P_f = U_f G'H' = W gh$, where $g, h$ are nonunits in $R[[X]]$, and where $W = U_f U^{-1} V^{-1} \in \RRR_\ppp[[X]]$ is a unit.  Since $f = Wgh$, the constant term $W_0$ divides $f_0$ in $R$ and is a unit in $\RRR_\ppp$ and is therefore a unit in $R$.   Therefore $f_0 R = (gh)_0 R$, and since $R \cap f_0 \RRR_\ppp = f_0 R$, by Lemma \ref{nzdlemma} we have $W \in R[[X]]$.    Thus $W$ is a unit in $R[[X]]$.  Finally, setting $g' = Wg$, we see that $P_{g'} = P_g$ and $f = g'h$ is reducible in $R[[X]]$.
\end{proof}

\section{Towards an effective irreducibility test}\label{sec:8}

 In this section we provide an algorithm for testing the irreducibility of a formal power series in $R[[X]]$ with no nonunit constant or square divisors, where $R$ is the ring of integers in a number field with class number 1.

Let $K$ be a finite extension of $\QQ_p$, and let $|\cdot|$ denote the unique absolute value on $K$ extending the $p$-adic absolute value on $\QQ_p$.  Let $\lambda$ be a positive real number.  For all $f \in K[X]$ let $||f||_\lambda = \max_i |a_i|\lambda^i$.  Then $||\cdot||_\lambda$ is a nonarchimedean absolute value on $K[X]$.  Denote the discriminant of a polynomial $f \in K[X]$ by $\Delta(f)$.  

\begin{lemma}\label{irredapprox}
Let $R = \mathcal{O}_K$ be the ring of integers in a finite extension $K$ of $\QQ_p$ with uniformizing parameter $\pi \in R$, let $f$ be a polynomial in $R[X]$ of degree $n$ with $f_0 \neq 0$ and $|f_n| = 1$, and let $\lambda = |f_0|^{1/n}$.  If $g$ is any polynomial in $R[X]$ of degree $n$ with $||f-g||_\lambda < \min(|f_0|, |\Delta(f)|^2/|f_0|^{4n-3})$, then $f$ is irreducible in $R[X]$ if and only if $g$ is irreducible in $R[X]$.
\end{lemma}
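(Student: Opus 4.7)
The plan is to pair roots of $f$ and $g$ in an algebraic closure $\overline{K}$ of $K$ and apply Krasner's lemma.

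First, from $||f-g||_\lambda < |f_0| = \lambda^n = ||f||_\lambda$ one reads off $|g_0| = |f_0|$, $|g_n| = 1$, and $|g_i|\lambda^i \leq \lambda^n$ for every $i$, so $g$ has the same (single-segment) Newton polygon as $f$, and every root of either polynomial in $\overline{K}$ has absolute value exactly $\lambda$. In particular $g_0 \neq 0$, and writing $f = f_n\prod_i(X-\alpha_i)$ and $g = g_n\prod_j(X-\beta_j)$ makes sense.

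Second, for each root $\alpha$ of $f$ the estimate $|g(\alpha)| = |(f-g)(\alpha)| \leq ||f-g||_\lambda$ combined with $g(X) = g_n\prod_j(X-\beta_j)$ forces $\prod_j|\alpha-\beta_j| \leq ||f-g||_\lambda$, so a closest root $\beta$ of $g$ to $\alpha$ is close. A Hensel-type refinement (applying Hensel's lemma to $f$ at $\beta$, or bootstrapping from a preliminary crude bound) sharpens this to $|\alpha-\beta| \leq |g(\alpha)|/|f'(\alpha)|$. The lower bound $|f'(\alpha)| \geq |\Delta(f)|/\lambda^{(n-1)^2}$ follows from the identity $\prod_k|f'(\alpha_k)| = |\Delta(f)|$ (valid since $|f_n|=1$) together with the uniform upper bounds $|f'(\alpha_k)| \leq \lambda^{n-1}$, and the minimum root separation $D := \min_{i\neq k}|\alpha_i-\alpha_k|$ satisfies $D \geq |\Delta(f)|^{1/2}/\lambda^{(n(n-1)-2)/2}$ by writing $|\Delta(f)| = \prod_{i<k}|\alpha_i-\alpha_k|^2$ and bounding all but one pair by $\lambda^2$.

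Third, the hypothesis $||f-g||_\lambda < |\Delta(f)|^2/|f_0|^{4n-3}$ is tailored so that these inequalities combine to give $|\alpha-\beta| < D$ for every paired $(\alpha,\beta)$. Krasner's lemma then yields $K(\alpha) \subseteq K(\beta)$. The symmetric argument---legitimate because $|g_0|=|f_0|$ and $|\Delta(g)|=|\Delta(f)|$ (the latter from the continuity of the discriminant in the coefficients and the smallness of $||f-g||_\lambda$), so the hypothesis holds with $f$ and $g$ interchanged---gives the reverse inclusion, so the pairing is a bijection with $K(\alpha) = K(\beta)$ throughout. Since each irreducible factor of $f$ in $K[X]$ of degree $d$ contributes $d$ roots all generating a common degree-$d$ extension, the multiset $\{[K(\alpha):K]\}_\alpha$ encodes the factorization type of $f$ over $K$; equality of this multiset for $f$ and $g$ forces equality of factorization types in $K[X]$, hence in $R[X]$ (both $f$ and $g$ are primitive since their leading coefficients are units).

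The main obstacle will be the numerical bookkeeping in the third step: verifying that the specific exponents $2$ on $|\Delta(f)|$ and $4n-3$ on $|f_0|$ indeed suffice to close the Hensel bootstrap simultaneously for all roots $\alpha$, and tracking enough slack in the estimates so that the swap-symmetric statement $|\Delta(g)|=|\Delta(f)|$ can be verified from the same hypothesis.
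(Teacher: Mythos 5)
Your route differs from the paper's: the paper does not re-derive root stability at all, but normalizes to $F=f/f_0$, $G=g/f_0$ and quotes the Newton-diagram and irreducibility stability results of Cantor--Gordon (\cite{can}, Lemma 8.18 and Corollary 8.19, whose threshold is $|\Delta(F)|^2$). Your plan is essentially to reprove the content of those cited results via root-pairing and Krasner's lemma, which is indeed the right underlying idea; but as written there are two genuine gaps. The first is your opening assertion that $||f||_\lambda=|f_0|$ and hence that every root of $f$ and of $g$ has absolute value exactly $\lambda$. This does not follow from the hypotheses: for $f=(X-1)(X-\pi^2)=X^2-(1+\pi^2)X+\pi^2$ one has $\lambda=|\pi|$, $||f||_\lambda=|\pi|>|\pi|^2=|f_0|$, and the roots have absolute values $1$ and $|\pi|^2$. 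The single-segment Newton polygon is a \emph{consequence} of irreducibility; it must be established for whichever of $f,g$ is assumed irreducible in the relevant direction of the equivalence and then transferred to the other (this is what the paper uses \cite{can}, Lemma 8.18 for), and all your uniform bounds $|\alpha_i-\alpha_k|\le\lambda$, $|f'(\alpha_k)|\le\lambda^{n-1}$ rest on it.

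The second gap is decisive: the chain of estimates you propose cannot be closed under the stated hypothesis, so the deferred ``numerical bookkeeping'' is not merely tedious but fails for your choice of intermediate bounds. Combining the worst-case bound $|f'(\alpha)|\ge|\Delta(f)|/\lambda^{(n-1)^2}$ with the global separation bound $D\ge|\Delta(f)|^{1/2}/\lambda^{(n(n-1)-2)/2}$, the Krasner condition $|g(\alpha)|/|f'(\alpha)|<D$ requires $||f-g||_\lambda<|\Delta(f)|^{3/2}/|f_0|^{(3n-5)/2}$ --- exponent $3/2$ on the discriminant, against the exponent $2$ available in the hypothesis. Concretely, for $n=2$ and $|\Delta(f)|=|f_0|^2$ with $|f_0|<1$, the hypothesis permits any $||f-g||_\lambda<|f_0|$ while your chain demands $||f-g||_\lambda<|f_0|^{5/2}$. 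To salvage the scheme one must drop the global $D$ and use the per-root inequality $\min_{k\ne i}|\alpha_i-\alpha_k|\ge|f'(\alpha_i)|/\lambda^{n-2}$ (immediate from $|f'(\alpha_i)|=\prod_{k\ne i}|\alpha_i-\alpha_k|$ when $|f_n|=1$), so that Krasner's condition becomes quadratic in $|f'(\alpha_i)|$ and hence comparable to $|\Delta(f)|^2$; this is what \cite{can} effectively encodes. Even then the translation between the normalized threshold $|\Delta(F)|^2$ and the stated $|\Delta(f)|^2/|f_0|^{4n-3}$ is delicate (dividing the hypothesis by $|f_0|$ gives denominator $|f_0|^{4n-2}$, not the $|f_0|^{4n-4}$ appearing in $|\Delta(F)|^2$), and your remaining unproved claims --- the refinement $|\alpha-\beta|\le|g(\alpha)|/|f'(\alpha)|$ and the equality $|\Delta(g)|=|\Delta(f)|$ --- each require the same quantitative hypotheses to be verified before they can be invoked.
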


\begin{proof}
Note first that $|f_0 - g_0|  \leq ||f-g||_\lambda < |f_0|$, so $|f_0| = |g_0|$ and $\lambda = |g_0|^{1/n}$.  Let $F = f/f_0$ and $G = g/f_0$.   By hypothesis one has
$$||F-G||_\lambda = |1/f_0| \, ||f-g||_\lambda < \min(1, |\Delta(f)|^2/|f_0|^{4n-4}) = \min(1, |\Delta(F)|^2).$$
Suppose that $g$ is irreducible in $R[X]$.  Then $G$ is irreducible in $K[X]$, so the Newton diagram of $G$ is a line of slope $-r = -v(g_0)/n \leq 0$, and one has $\lambda = |\pi|^r$.  It then follows from the remark after Definition 4.1 of \cite{can} that $||G||_\lambda = |G_0| = |g_0/f_0| = 1$.  Moreover, since $||F-G||_\lambda < 1 = ||G||_\lambda$, one has $||F||_\lambda = 1$, and by \cite[Lemma 8.18]{can}, the Newton diagram of $F$ is the same as that of $G$.  Therefore, since $G$ is irreducible in $K[X]$, by \cite[Corollary 8.19]{can} the polynomial $F$ is irreducible in $K[X]$, so $f = f_0 F$ is irreducible in $R[X]$ since $f$ is primitive, as $|f_n| = 1$.

Conversely, suppose that $f$ is irreducible in $R[X]$.  As above it follows that $||F||_\lambda = ||G||_\lambda = 1$ and the Newton diagram of both $F$ and $G$ is a line of slope $-r$, where $\lambda = |\pi|^r$.  By the proof of \cite[Corollary 8.19]{can} one has $|\Delta(F)| = |\Delta(G)|$, and therefore $$||F-G||_\lambda <  \min(1, |\Delta(G)|^2).$$   Therefore, again by \cite[Corollary 8.19]{can}, the polynomial $G$ is irreducible in $K[X]$.  Finally, one has $|f_n - g_n||f_0| = |f_n - g_n|\lambda^n < |f_0|$, whence $|f_n -g_n| < 1$ and so $|g_n| = |f_n| = 1$ and therefore the polynomial $g = f_0 G$ is primitive.  Thus $g$ is irreducible in $R[X]$.
\end{proof}

Let $R$ be a ring and $\aaa$ an ideal of $R$.  For any $f \in R[[X]]$ that is $\widehat{\aaa}$-distinguished in $\RRR_\aaa[[X]]$ we define $\Delta_\aaa(f)$ to be the discriminant $\Delta(P) \in \RRR_\aaa$ of the associated Weierstrasss polynomial $P = P_{f,\RRR_\aaa}$.   Suppose that $\mm = \aaa$ is maximal.  Although $\RRR_\mm$ may not be a complete local ring, it is necessarily a Henselian local ring, by the proof of \cite[Theorem 8.3 Hensel's lemma]{mat}.  Suppose that $\RRR_\mm$ is an integral domain. Let $\RRR_\mm^+$ be the absolute integral closure of $\RRR_\mm$ and $\widehat{\mm}^+$ its unique maximal ideal.  Let $\widetilde{R}_\mm = \widehat{(\RRR_\mm^+)}_{\widehat{\mm}^+}$ denote the completion of $\RRR_\mm^+$ with respect to $\widehat{\mm}^+$ and $\widetilde{\mm}$ its unique maximal ideal, and suppose that $\widetilde{R}_\mm$ is also an integral domain. Since $U = U_{f, \RRR_\mm}$ converges in $\widetilde{\mm}$ and has no zeros in $\widetilde{\mm}$, the roots of $P$ in $\RRR_\mm^+$ coincide with the zeros of $f = UP$ in $\widetilde{\mm}$.  If these roots are $\alpha_1, \ldots, \alpha_n$ (counting multiplicities) then one has $\Delta_\mm(f) = \prod_{i < j} (\alpha_i - \alpha_j)^2$.  In particular, $\Delta_\mm(f) \in \RRR_\mm$ can be computed from the zeros of $f$ (or approximated from approximate zeros of $f$) in $\widetilde{R}_\mm$, without a priori knowledge of the Weierstrass polynomial $P_{f, \RRR_\ppp}$, assuming that $\RRR_\mm$ and $\widetilde{R}_\mm$ are integral domains.

The following algorithm avails itself of standard algorithms in algebraic number theory.

\begin{algorithm}\label{irredtestalg2}
Let $K$ be a number field with class number $1$ and ring of integers $R = \mathcal{O}_K$. Let $f \in R[[X]]$, and suppose that $f$ has no nonunit constant or square factors and $f_0$ is a nonzero nonunit of $R$.   Let $\pi$ be a prime in $R$ dividing $f_0$, and let $D$ be a positive lower bound for $|\Delta_{(\pi)}(f)|$, where $|\cdot|$ is the unique absolute value on $\RRR_{(\pi)}$ with $|p| = 1/p$, where $p$ is the characteristic of the residue field $R/(\pi)$.  Let $n$ be the least nonnegative integer so that $\pi \nmid f_n$.    Let $B = \min(|f_0|, D^2/|f_0|^{4n-3})$, and choose a positive integer $N$ so that $|\pi|^N < B|\pi|^n$.  Let  $g = f \, \Mod \, (X^{(n+1)N})$.  Given the data $(R, g,\pi, n, N)$, the following algorithm determines whether or not $f$ is irreducible in $R[[X]]$.
\begin{enumerate} 
\item If $g_0 = 0$, then return IRREDUCIBLE if $g_1$ is a unit in $R$ and return REDUCIBLE otherwise.
\item If $g_0$ has at least two nonassociate prime factors in $R$, then return REDUCIBLE.
\item For each $i \leq (n+1)N$, compute a system $\Pi_i \subseteq R$ of representatives for $R/(\pi^i)$ with $0 \in \Pi_i$.
\item Using Algorithm \ref{walg}, compute $Q = P_g \, \Mod \, (\pi, X)^N \in R[X]$, where $P_g$ is the Weierstrass polynomial in $\RRR_{(\pi)}[X]$ associated to $g$.
\item Test whether or not $Q$ is irreducible in $\RRR_{(\pi)}[X]$ using known irreducibility tests, as in \cite{can} \cite{chi} \cite{guar} \cite{guar1} \cite{guar2} \cite{pau}.  Return IRREDUCIBLE if $Q$ is irreducible in $\RRR_{(\pi)}[X]$ and return REDUCIBLE otherwise.
\end{enumerate}
\end{algorithm}

\begin{proof}
 Since $f$ has no nonunit square factors, by Theorem \ref{maintheorem2}(2) the same is true of the polynomial $P = P_f$, and $\Delta_{(\pi)}(f) = \Delta(P)$ is therefore nonzero.  Since steps (1) and (2) are clearly justified, we may assume without loss of generality that $f_0 = g_0$ is nonzero and does not have two nonassociate prime factors.  By Algorithm \ref{walg} and the construction of $Q$ in step (4) one has $P = P_f \equiv P_g \equiv Q \ (\Mod \, (\pi,X)^N)$.  Moreover, since $0 \in \Pi_i$ for each $i$ and $P$ has degree $n < N$, it follows that $Q$ also has degree $n$.  Let $\lambda = |f_0|^{1/n}$.  One has
$$||P - Q||_\lambda = \max_{i \leq n} |P_i-Q_i|\lambda^i \leq  \max_{i \leq n} |P_i-Q_i| \leq  \max_{i \leq n} |\pi|^{N-i} < B,$$
and therefore since $|f_0| = |P_0|$ and $\Delta_{(\pi)}(f) = \Delta(P)$ one has $$||P-Q||_\lambda < B \leq \min(|P_0|, |\Delta(P)|^2/|P_0|^{4n-3}).$$
By Lemma \ref{irredapprox} it follows that $P$ is irreducible in $\RRR_\ppp[X]$ if and only if $Q$ is irreducible in $\RRR_\ppp[X]$.
Finally, by Theorem \ref{irredtest}, $P = P_f$ is irreducible in $\RRR_\ppp[X]$ if and only if $f$ is irreducible in $R[[X]]$.  This justifies step (5) of the algorithm and concludes the proof.
\end{proof}

\begin{remark}
Algorithm \ref{irredtestalg2} could be implemented if one had an effective algorithm for computing a positive lower bound for $|\Delta_{(\pi)}(f)|$.
\end{remark}

\section*{Acknowledgments}

I would like to acknowledge the work of James M.\ McDonough.  Some progress on the irreducibility and factoring problem was made as a consequence of the results in his thesis \cite{mcd} referenced herein.  I would also like to thank Jared Weinstein and the anonymous referee for their helpful comments on previous versions of this paper.

\end{document}